\definecolor{citegreen}{rgb}{0,0.6,0}
\definecolor{refred}{rgb}{0.8,0,0}
\newtheorem{thm}{Theorem}[section]
\newtheorem{lem}[thm]{Lemma}
\newtheorem{prop}[thm]{Proposition}
\newtheorem{cor}[thm]{Corollary}
\theoremstyle{definition}
\newtheorem{defn}[thm]{Definition}
\theoremstyle{remark}
\newtheorem{rem}[thm]{Remark}
\numberwithin{equation}{section}
\def\cL{\mathcal L}
\def\E{{\sf E}}
\def\R{\mathbb R}
\def\K{\mathbb K}
\def\R{{{\mathbb R}}}
\def\NN{\mathbb N}
\def\N{\mathbb N}
\def\cP{\mathcal P}
\def\cM{\mathcal M}
\def\cB{\mathcal B}
\def\cT{\mathcal T}
\def\cK{\mathcal K}
\def\cA{\mathcal A}
\def\fB{\mathfrak B}
\def\bP{{\sf P}}
\DeclareMathOperator*{\esssup}{ess\,sup}
\newcommand{\W}{\mathcal{W}}
\newcommand{\Norm}[2]{\left\Vert #1 \right\Vert_{#2}}
\begin{document}

\title[Well-posedness of KFP equations with unbounded drift]{Well-posedness of Kolmogorov-Fokker-Planck equations with unbounded drift}

\author[F. Anceschi]{Francesca Anceschi}
\address{Francesca Anceschi\\
Dipartimento di Ingegneria Industriale e Scienze Matematiche, Università Politecnica delle Marche, Via Brecce Bianche 12, 
60131 Ancona, Italy}
\email{f.anceschi@staff.unipvm.it}

\author[G. Ascione]{Giacomo Ascione}
\address{Giacomo Ascione\\
Scuola Superiore Meridionale,
Universit\`a di Napoli, Largo San Marcellino 10, 80138 Napoli,
Italy}
\email{g.ascione@ssmeridionale.it}

\author[D. Castorina]{Daniele Castorina}
\address{Daniele Castorina\\
Dipartimento di Matematica e Applicazioni,
Universit\`a di Napoli, Via Cintia, Monte S. Angelo 80126 Napoli,
Italy}
\email{daniele.castorina@unina.it}

\author[F. Solombrino]{Francesco Solombrino}
\address{Francesco Solombrino\\
Dipartimento di Matematica e Applicazioni,
Universit\`a di Napoli, Via Cintia, Monte S. Angelo 80126 Napoli,
Italy}
\email{francesco.solombrino@unina.it}

\begin{abstract}  
We consider Kolmogorov-Fokker-Planck equations with unbounded drift terms which are only measurable in time and locally H\"older continuous in space. In particular, we extend the parametrix method to this setting and we prove existence and uniqueness of measure solutions to the associated Cauchy problem, as well as the equivalence with the corresponding stochastic formulation.
\end{abstract}
\keywords{Kolmogorov-Fokker-Planck equations, stochastic differential equations, unbounded coefficients, fundamental solutions, parametrix methods}
\subjclass{35Q84, 35A08, 60H10, 60H30}
\thanks{{FA is supported by GNAMPA-INdAM Project “Problemi non locali: teoria cinetica e non uniforme ellitticità”, CUP$\_$E53C220019320001. GA is supported by the GNAMPA-INdAM Project ''Deterministic Control of Stochastic Dynamics'', CUP$\_$E53C23001670001 and the PRIN 2022XZSAFN Project ''Anomalous Phenomena on Regular and Irregular Domains: Approximating Complexity for the Applied Sciences'' CUP$\_$E53D23005970006. DC and FS are supported by the PRIN 2022HKBF5C Project ``Variational Analysis of Complex Systems in Material Science, Physics and Biology'' CUP$\_$E53D23005720006 . PRIN projects are part of PNRR Italia Domani, financed by European Union through NextGenerationEU. FS additionally acknowledeges support from the GNAMPA-INdAM Project "Problemi di controllo ottimo nello spazio di Wasserstein delle misure definite su spazi di Banach" CUP$\_$E53C23001670001 and from the Starplus 2020 Unina Linea 1, ``New Challenges in the Variational Modeling of Continuum Mechanics" from the University of Naples Federico II and Compagnia di San Paolo.}}

\setcounter{tocdepth}{1}

\maketitle

\section{Introduction} \label{intro}
The aim of this work is to prove existence and uniqueness for measure solutions to the linear Cauchy problem with measure data
\begin{equation}\label{probmuintro}
\begin{cases} \partial_t \mu_t = - v \cdot \nabla_x \mu_t + \sigma \Delta_v \mu_t - \mathrm{div}_v (F(t,x,v) \mu_t) \quad &(t,x,v) \in  (0, T] \times\mathbb{R}^{2d}, \\
\mu_0 = \bar{\mu}  \quad &(x,v) \in \R^{2d},
\end{cases}
\end{equation}
where 
$T,\sigma>0$ (possibly $T=\infty$) and $F:[0,T]\times \R^{2d} \to \R^d$ is a suitable measurable vector field. The assumptions on $F$ will be presented below  and indeed, achieving a well-posedness theory under such assumptions is the main scope of the paper.

\subsection{Presentation of the problem}
Our analysis is focused on three fundamental aspects related to \eqref{probmuintro}. First of all, we are interested in understanding under which conditions problem \eqref{probmuintro} is well-posed. Furthermore, we aim to investigate what are the possible minimal assumptions ensuring the uniqueness of this solution (in a sense later on specified). Lastly, since we are interested in solutions with finite first order moment (hence, in $\W_1$), we 
focus our attention on models where the drift term $F(t,z)$ is possibly unbounded, and which satisfies a suitable growth condition alongside with a local smoothness assumption. Eventually, the equivalence of \eqref{probmuintro}, in the sense of Theorem \ref{thm:existence} below, with a second-order system of SDEs will be the output of our analysis and the starting point for possible applications. These applications, not contained in the present paper, will be shortly presented at the end of this introduction and constitute the scope of further investigation.

\medskip

By duality, the problem of finding a unique solution to \eqref{probmuintro} is connected to the problem of proving the existence of a solution for the dual Kolmogorov equation
\begin{equation} \label{dual-Kolmo}
	\sigma \Delta_v g(t,z) + \partial_t g(t,z) + v \cdot \nabla_x g(t,z) +F(t,z)\cdot \nabla_v g(t,z)= \Psi(t,z), \ (t,z) \in \R^{1+2d},
\end{equation}
where $z=(x,v) \in \R^{2d}$, $\Psi \in C^{\infty}_c(\R^{2d})$ and $\sigma >0$ is the constant appearing in \eqref{probmuintro}. This field of research has an interest of its own, and this problem is classically attacked by proving the existence of a fundamental solution for the operator associated to \eqref{dual-Kolmo} via an adaptation of the Levy parametrix method, see for instance the works \cite{polidoro, francesco2005class} focused on proving the existence of a classical solution to the Cauchy problem associated with \eqref{dual-Kolmo}. Recently, there has been a proliferation of new results extending those for the classical case to Lie solutions (see Section \ref{sub:kfp-lin}) considering diffusion matrices and drift terms with bounded coefficients, only measurable in time and locally H\"older continuous in space, see \cite{lucertinigood, BiagiBramanti}. Nevertheless, to our knowledge the case of unbounded $F$ has never been considered before in this context and for this reason also the technical results of Section \ref{sub:kfp-lin} are a novelty of this work. 

\medskip

Recalling that $z=(x,v) \in \R^{2d}$ and comparing our analysis to the existing literature on the dual equation \eqref{dual-Kolmo}, it is immediately clear that the regularity assumptions for $F$ are anisotropic in the sense that, for any fixed $t \in (0,T)$, the drift term $F$ is assumed to be $\alpha$-H\"older continuous in $v$, and $\frac\alpha3$-H\"older continuous in $x$ (see assumption $(A_1)$). This is due to the underlying non-Euclidean geometry associated with the operator and later on introduced in Section \ref{sub:notation}. 

\medskip

In conclusion, our aim is to develop an existence and uniqueness theory for problems of the type \eqref{probmuintro} with possibly unbounded drift $F$ subject to a growth condition and not necessarily Lipschitz regular in $z$ (in the usual Euclidean sense). Moreover, our analysis provides us with new insights for the study of Kolmogorov equations \eqref{dual-Kolmo} since it extends recent results for the existence of a fundamental solution to the case of unbounded drift terms, that is still not present in the already existing literature.  

\subsection{Statement of main results}
In this work, we are interested in studying the following type of solutions. 
\begin{defn} \label{solution}
	A flow of probability measures $\{\mu_t\}_{t \in [0,T]}\in C([0,T]; \W_1(\R^{2d}))$, where $\W_1(\R^{2d})$ is the $1$-Wasserstein space, is a solution of
	\begin{equation}\label{probmudef}
		\begin{cases} \partial_t \mu_t = - v \cdot \nabla_x \mu_t + \sigma \Delta_v \mu_t - \mathrm{div}_v (F(t,z) \mu_t) \quad &(t,x,v) \in  (0, T] \times\mathbb{R}^{2d}, \\
			\mu_0 = \bar{\mu}  \quad &(x,v) \in \R^{2d},
		\end{cases}
	\end{equation}
	if and only if for all $\psi \in C^\infty_c(\R^{2d})$ and for all $t \in [0,T]$ it holds
	\begin{equation}\label{eq:weaksol}
		\int_{\R^{2d}}\psi \, d\mu_t-\int_{\R^{2d}}\psi \, d\overline{\mu}=\int_0^t \int_{\R^{2d}}\left(v \cdot \nabla_x \psi+ F(s,z)\cdot \nabla_v \psi+ \sigma \Delta_v \psi\right) \, d\mu_s \,  ds. 
	\end{equation}
	If $T=\infty$, we say that $\{\mu_t\}_{t \ge 0}$ is a global solution of \eqref{probmudef}, otherwise we call it a local solution.
\end{defn}
\begin{rem}
	By a density argument, we infer that if $\{\mu_t\}_{t \in [0,T]}$ is a solution of \eqref{probmudef}, then \eqref{eq:weaksol} holds for any $\psi \in {\sf A}(\R^{2d})=\{\psi \in C^1_{\rm b}(\R^{2d}): \ \Delta_v \psi \in C_{\rm b}(\R^{2d})\}$.
\end{rem}
In particular, we are interested in proving the existence of a global solution for \eqref{probmudef} under minimal assumptions, and then we will tackle the uniqueness of such a solution. 
To prove that \eqref{probmudef} admits at least a local solution, we have to make the following \textit{minimal} assumptions on the vector field $F$.
\begin{itemize}
	\item[$(A_0)$] $F:[0,+\infty) \times \R^{2d} \to \R^d$ is a Carath\'{e}odory map, i.e. it is measurable in the first variable and continuous in the second one. \label{itemA0}
	\item[$(A_1)$] For any $T>0$ there exists a constant $C(F)>0$ such that $F(t,z) \le C(F)(1+|z|_B^{\beta})$ for all $z \in \R^{2d}$, with $\beta \in (0,1)$.
	\item[$(A_2)$] For any $T>0$ and any compact set $K \subset \R^{2d}$ there exist two constants $L>0$ and $\alpha\in (\beta,1]$ such that
	\begin{equation*}
		|F(t,z_1)-F(t,z_2)| \le L \Norm{z_1-z_2}{\rm B}^{\alpha}, \ \forall \, z_1,z_2 \in K, \ \forall \,  t \in [0,T].
	\end{equation*}
\end{itemize}
Notice that Assumption $(A_2)$ makes explicit use of the Lie structure we will introduce in Section~\ref{sub:notation}.

To get the existence result, we will use the theory of stochastic differential equations (SDEs) and for this reason we fix a probability space $(\Omega, \Sigma, \bP)$. Let us recall the following definition.
\begin{defn}
	Consider the SDE
	\begin{equation}\label{eq:SDEstrongaux-introdef}
		\begin{cases}
			dX(t)=V(t)\, dt & t \in [0,T]\\
			dV(t)=F(t,X(t),V(t))\, dt+\sqrt{2\sigma} \, dB(t) & t \in [0,T] \\
			X(0)=X_0, \quad V(0)=V_0
		\end{cases}
	\end{equation} 
	where $B$ is a $d$-dimensional Brownian motion on $(\Omega, \Sigma, \bP)$, $F$ satisfies Assumptions $(A_0)$, $(A_1)$ and $(A_2)$, $\sigma>0$, $(X_0,V_0) \in \cM(\Omega;\R^{2d})$ and $T>0$ (possibly $T=\infty$). We say that \eqref{eq:SDEstrongaux-introdef} admits a strong solution if there exists a stochastic process $(X,V)$ such that:
	\begin{itemize}
		\item[$(i)$] $\int_0^tV(s)\, ds$ and $\int_{0}^t F(s,X(s),V(s))\, ds$ are well-defined for any $t \in [0,T]$;
		\item[$(ii)$] For any $t \in [0,T]$ we have
		\begin{equation*}
			V(t)=V_0+\int_0^tF(s,X(s),V(s))\, ds+\sqrt{2\sigma}B(t), \qquad X(t)=X_0+\int_0^t V(s)\, ds.
		\end{equation*}
	\end{itemize}
	The process $Z=(X,V)$ is the strong solution of \eqref{eq:SDEstrongaux-introdef}. We say that uniqueness in law holds for the SDE \eqref{eq:SDEstrongaux-introdef} if for any two solutions $Z_1,Z_2$ one has ${\rm Law}(Z_1(t))={\rm Law}(Z_2(t))$ for any $t \in [0,T]$. Furthermore, we say that pathwise uniqueness holds if $Z_1=Z_2$ almost surely (a.s.).
\end{defn}

On the other hand, uniqueness of the solution of \eqref{eq:weaksol} is shown by means of a duality approach based on the results of Section \ref{sub:kfp-lin}. The following global existence and uniqueness theorem holds true.
\begin{thm}\label{thm:existence}
	Under Assumptions $(A_0)$,$(A_1)$ and $(A_2)$, for any $\mu_0 \in \W_1(\R^{2d})$ and $T>0$, the equation
	\begin{equation}\label{probmu}
		\begin{cases} \partial_t \mu_t = - v \cdot \nabla_x \mu_t + \sigma \Delta_v \mu_t - \mathrm{div}_v (F(t,z) \mu_t) \quad &(t,x,v) \in  \R^+_0 \times\mathbb{R}^{2d}, \\
			\mu_0 = \bar{\mu}  \quad &(x,v) \in \R^{2d},
		\end{cases}
	\end{equation}
	admits exactly one global solution ${\bm \mu} :=  \{\mu_t\}_{t \in \R_0^+}\in C(\R_0^+; \W_1(\R^{2d}))$, which can be characterized as follows. Let $B$ be a $d$-dimensional Brownian motion and $(X_0,V_0) \in \cM(\Omega;\R^{2d})$ be independent of $B$ and such that ${\rm Law}(X_0,V_0)=\mu_0$, then there exists a stochastic process $(X,V) \in L^1(\Omega; C([0,T]; \R^{2d}))$ for all $T>0$ such that ${\bm \mu} = Law(X,V)$ and $(X,V)$ is the pathwise unique strong solution to 
	\begin{equation}\label{eq:SDEstrongaux-intro}
		\begin{cases}
			dX(t)=V(t)\, dt & t \in \R_0^+\\
			dV(t)=F(t,X(t),V(t))\, dt+\sqrt{2\sigma} \, dB(t) & t \in \R_0^+ \\
			X(0)=X_0, \quad V(0)=V_0.
		\end{cases}
	\end{equation} 
\end{thm}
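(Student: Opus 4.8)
The plan is to prove Theorem~\ref{thm:existence} by coupling an existence result for the SDE \eqref{eq:SDEstrongaux-intro} with a uniqueness result for the PDE \eqref{probmu}, and then identifying the two notions of solution. First I would establish \emph{existence} of a global solution to \eqref{probmu}: starting from a probability space carrying a Brownian motion $B$ and an independent initial datum $(X_0,V_0)$ with ${\rm Law}(X_0,V_0)=\mu_0$, I would construct a solution to the SDE \eqref{eq:SDEstrongaux-intro}. Because $F$ is only locally H\"older in $z$ and of sublinear growth $|z|_B^\beta$ with $\beta<1$ (Assumption $(A_1)$), one cannot appeal to classical Lipschitz theory; instead I would proceed by a localization/truncation argument — solve the SDE with $F$ replaced by $F_R$, a cut-off coinciding with $F$ on the ball $\{|z|_B\le R\}$, obtaining a solution up to an exit time $\tau_R$, then use the growth bound $(A_1)$ together with Gronwall-type estimates on $\E\sup_{s\le t}(|X(s)|+|V(s)|)$ to show $\tau_R\to\infty$ a.s.\ and obtain a global solution in $L^1(\Omega;C([0,T];\R^{2d}))$ for every $T$. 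Applying It\^o's formula to $\psi(Z(t))$ for $\psi\in C^\infty_c(\R^{2d})$ and taking expectations shows that $\mu_t:={\rm Law}(X(t),V(t))$ satisfies the weak formulation \eqref{eq:weaksol}; the finite-first-moment bound gives the required $C(\R_0^+;\W_1(\R^{2d}))$ continuity, so $\bm\mu$ is a global solution of \eqref{probmu}.

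Next I would prove \emph{uniqueness} of the solution of \eqref{probmu} by the announced duality argument. Given any solution $\{\mu_t\}$, fix $T>0$ and $\Psi\in C^\infty_c(\R^{2d})$, and let $g$ be a solution of the backward dual Kolmogorov equation \eqref{dual-Kolmo} on $[0,T]\times\R^{2d}$ with terminal condition $g(T,\cdot)=0$ and source $\Psi$ — its existence, together with the regularity needed to plug it into the weak formulation (in particular $g(t,\cdot)\in{\sf A}(\R^{2d})$ with the appropriate decay), is exactly what the parametrix construction of Section~\ref{sub:kfp-lin} for \emph{unbounded} drift provides. Testing \eqref{eq:weaksol} with $\psi=g(t,\cdot)$ — after a time-mollification to make the time-dependence admissible, since $F$ and $g$ are only measurable in $t$ — and using that $g$ solves \eqref{dual-Kolmo}, the drift and diffusion terms cancel and one is left with $\int_0^T\int_{\R^{2d}}\Psi\,d\mu_t\,dt$ being determined solely by $\mu_0$. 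Since $\Psi$ and $T$ are arbitrary and $C^\infty_c$ is measure-determining, this forces any two solutions with the same initial datum to coincide, giving uniqueness for \eqref{probmu}.

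Finally I would close the loop between the PDE and the SDE. Uniqueness for \eqref{probmu} implies that \emph{every} solution of the SDE \eqref{eq:SDEstrongaux-intro} has the same one-dimensional time-marginals, i.e.\ uniqueness in law holds. To upgrade to pathwise uniqueness and to identify $\bm\mu$ as the law of the strong solution, I would invoke a Yamada--Watanabe-type principle adapted to this degenerate (hypoelliptic) setting: the $x$-component has no noise and is just the time-integral of $v$, so pathwise uniqueness for the pair reduces to pathwise uniqueness for the $v$-equation, which follows from local H\"older-continuity of $F$ in $v$ (Assumption $(A_2)$ with exponent $\alpha$) by a standard one-dimensional comparison/local-time argument of Yamada--Watanabe type once the solution is localized to a compact set, together with the non-explosion already established. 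Weak existence plus pathwise uniqueness then yield existence of a strong solution and uniqueness in law, and the strong solution's law is precisely the unique $\bm\mu$ constructed above, lying in $L^1(\Omega;C([0,T];\R^{2d}))$ by the moment bounds.

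The main obstacle I expect is the duality step: one must produce a solution $g$ of the dual equation \eqref{dual-Kolmo} with \emph{unbounded} drift $F$ that is regular enough to be used as a test function in \eqref{eq:weaksol} and that decays (together with its relevant derivatives) fast enough in $z$ to justify all integrations against the measures $\mu_t$, which only have a finite \emph{first} moment. Controlling the parametrix series and the resulting estimates on $g$ and $\nabla_v g$ under the sublinear growth $(A_1)$ and anisotropic H\"older condition $(A_2)$ — and handling the merely measurable-in-time dependence via mollification without destroying these estimates — is the technical heart of the argument and is precisely what Section~\ref{sub:kfp-lin} is built to supply.
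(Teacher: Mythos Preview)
Your overall architecture---existence via the SDE with a truncation/localization step, uniqueness for the PDE via duality with the backward Kolmogorov equation, and linking the two through It\^o's formula---matches the paper's strategy closely, and your description of the duality step (including the need for time-mollification and for the regularity estimates of Section~\ref{sub:kfp-lin}) is accurate.

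There is, however, a genuine gap in your route to \emph{pathwise uniqueness and strong existence}. You propose to obtain pathwise uniqueness for the $v$-equation ``by a standard one-dimensional comparison/local-time argument of Yamada--Watanabe type'' from the local H\"older continuity of $F$. This does not work: the $v$-component lives in $\R^d$, not $\R$, so the Tanaka/local-time machinery is unavailable; and even in dimension one the Yamada--Watanabe criterion allows H\"older-$\tfrac12$ regularity of the \emph{diffusion} coefficient while still requiring the \emph{drift} to satisfy an Osgood (essentially Lipschitz) condition---here the diffusion is constant and it is the drift that is merely H\"older, which is the wrong way around. Results that do give pathwise uniqueness for H\"older drift with additive noise (Zvonkin, Veretennikov, and in the kinetic/degenerate case Chaudru de Raynal et al.) use PDE transforms rather than local-time comparison, and are not ``standard Yamada--Watanabe''. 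Relatedly, you take for granted that the truncated SDE with bounded $F_R$ admits a (strong) solution; with degenerate noise this is not immediate either.

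The paper avoids both difficulties by reversing the logic. For bounded $F$ it proves \emph{strong} existence directly, via the Bharucha--Reid random Schauder fixed point theorem (Theorem~\ref{thm:BRfp}) applied to the pathwise integral operator $\cT$; the localization to unbounded $F$ then proceeds as you describe. With strong existence in hand and uniqueness in law supplied by the PDE uniqueness (Proposition~\ref{prop:uniqueness}), pathwise uniqueness follows from Cherny's converse to Yamada--Watanabe (strong existence $+$ uniqueness in law $\Rightarrow$ pathwise uniqueness). So the key ingredient you are missing is the direct construction of a strong solution for bounded drift via a random fixed point argument, which then lets you invoke Cherny rather than the classical Yamada--Watanabe direction.
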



	

\begin{rem}
	In particular, we can say that for any $T>0$ the unique solution of \eqref{probmu} is the flow of probability measures obtained by applying the evaluation map on a measure $\bm{\mu} \in \W_1(C([0,T];\R^{2d}))$.
	
	It is also worth noticing that if $\sigma=0$, then Assumptions $(A_0)$, $(A_1)$ and $(A_2)$ do not guarantee the uniqueness of the solution for the system of Ordinary Differential Equations \eqref{eq:SDEstrongaux-intro}. From this point of view, Theorem \ref{thm:existence} can be seen as a regularization by noise result, in a degenerate noise setting (see \cite{Flandoli11} and references therein).
\end{rem}

\subsection{Further directions} The well-posedness results descibed above are in the present paper recovered for the case of a linear Kolmogorov equation. Indeed, they constitute in our opinion the starting point of a broader investigation, concerning particle systems of the second order whose prototypical agent is driven by a McKean-Vlasov SDE, or by a Vlasov-Fokker-Plank PDE. In this case the drift field depends on the solution $\mu$ itself in a nonlocal way, a typical example being the choice 
\begin{equation}\label{introeq:nonlocaldrift}
F(t, \mu, z)=H(t,z)\ast \mu_t\,.
\end{equation}
A future objective is extending the results obtained here to such a case, which is relevant from the point of view of the applications to mean-field control problems for multi-agent systems. The (simpler) case of first order systems has been addressed, also with well-posedness results for a class of optimal control problems, in a number of recent papers (we may refer for instance to \cite{Ascione20236965, Orlando2023}, where also applications are presented.) 

Even if in literature boundedness and global Lipschitz continuity for the kernel $H$ in \eqref{introeq:nonlocaldrift} are often  assumed, also  in the nonlinear case we strive to obtain results under weaker assumptions (growth conditions at infinity in place of global  boundedness, local H\"older continuity etc.), exactly in the spirit of the present paper. Moreover, we remark, as it can also be inferred by related first-order cases as in \cite{Ascione20236965, Almi20231},  that the equivalence with a stochastic formulation is particularly useful for uniqueness issues, fixed point arguments, and stability estimates on the solutions which allow one to prove well-posedness of the related optimal control problems. This research program will be carried out in some forthcoming papers.

\subsection{Plan of the work}
In Section \ref{sub:notation} we introduce the necessary notation and some preliminary results useful for our work. Section \ref{sub:kfp-lin} is devoted to 
the proof of existence results for the dual Kolmogorov equation \eqref{dual-Kolmo}, through an extension of the parametrix method for the unbouded drift case. The proof of our main results is exploited in Section \ref{proofthm}. Finally, Appendix \ref{app:A} contains useful results regarding group convolutions in the kinetic setting.

\section{Useful notation and preliminary results} \label{sub:notation}
We denote by $\cM(\Omega;\fB)$ the space of $\fB$-valued random variables (i.e. Borel-measurable functions $X:\Omega \to \fB$). The law of a random variable $X \in \cM(\Omega;\fB)$ is defined as ${\rm Law}(X)=X \sharp 
\bP$, where $\sharp$ denotes the pushforward, i.e. for any $A \in \cB(\fB)$
\begin{equation*}
	{\rm Law}(X)(A)=\bP(X^{-1}(A)).
\end{equation*}
We denote by $L^p(\Omega; \fB)$ the subspace of $\cM(\Omega;\fB)$ of random variables $X$ such that
\begin{equation*}
	\E[(d_{\fB}(X,x_0))^p]<\infty, \ \forall x_0 \in \fB,
\end{equation*}
where $\E$ is the expected value.
In this setting, we can define a coupling of two probability measures $\mu,\nu \in \cP(\fB)$ as any random variable $(X,Y) \in \cM(\Omega;\fB \times \fB)$ such that ${\rm Law}(X)=\mu$ and ${\rm Law}(Y)=\nu$ (see \cite[Definition 1.1]{villani}). 

For any $T>0$, we denote by $C([0,T];\fB)$ the space of continuous functions $f:[0,T] \to \fB$ equipped with the uniform distance, i.e.
\begin{equation*}
	d_{\infty}(f,g)=\sup_{t \in [0,T]}d_{\fB}(f(t),g(t)).
\end{equation*}
Clearly, $C([0,T];\fB)$ is a complete separable metric space.

%
%

\subsection{Flows of probability measures}
For reader's convenience, in the following we will use the notation $\bm{\mu} \in C([0,T];\W_p(\mathfrak{B}))$ to also denote continuous flows of probability measures, with $\bm{\mu}=\{\mu_t\}_{t \in [0,T]}$. We write $\bm{\mu} \in \W_p(C([0,T];\mathfrak{B}))$ to state that there exists a Borel probability measure (that we still denote $\bm{\mu}$) on $C([0,T];\mathfrak{B})$ such that $\mu_t={\rm ev}_t \sharp \mu$. With this identification, we can say that $\W_p(C([0,T];\mathfrak{B}))\subset C([0,T];\W_p(\mathfrak{B}))$. Furthermore, for any $T_1>T_2$ and any $\bm{\mu}=\{\mu_t\}_{t \in [0,T_1]} \in C([0,T_1];\W_p(\mathfrak{B}))$ we still denote $\bm{\mu}=\{\mu_t\}_{t \in [0,T_2]}$ (i.e. truncating the flow at $T_2$), so that through this choice we have $C([0,T_1];\W_p(\mathfrak{B})) \subset C([0,T_2];\W_p(\mathfrak{B}))$.

We will denote by $C([0,T];\cP(\fB))$ the space of narrowly continuous flows of probability measures, i.e. such that for any $t_0 \in [0,T]$ and any $f \in C_{\rm b}(\fB)$ it holds
\begin{equation*}
	\lim_{t \to t_0} \int_{\fB} f(x)d\mu_t(x)=\int_{\fB} f(x)d\mu_{t_0}(x),
\end{equation*}
see \cite[Section 5.1]{ambrosio2005gradient}. 

\subsection{Random operators}
Let $\fB$ be a Banach space. A function $\cT:\Omega \times \fB \to \fB$ is said to be a random operator if, for any $x \in \fB$, $\cT(\cdot,x) \in \cM(\Omega;\fB)$. We say that a random operator is continuous if, for any $\omega \in \Omega$, $\cT(\omega,\cdot):\fB \to \fB$ is continuous. A random operator is compact if, for any $\omega \in \Omega$, $\cT(\omega,\cdot):\fB \to \fB$ is a compact operator. A random fixed point of random operator $\cT$ is a random variable $X \in \cM(\Omega;\fB)$ such that for any $\omega \in \Omega$ we have $\cT(\omega,X(\omega))=X(\omega)$, in shorthand notation $\cT X=X$. We recall here the Bharucha-Reid fixed point theorem (see \cite[Corollary 2.2]{itoh1979random}), which is a random version of Schauder's fixed point theorem.
\begin{thm}\label{thm:BRfp}
	Let $\mathfrak{X}$ be a closed convex subset of $\fB$ and $\cT:\Omega \times \mathfrak{X} \to \mathfrak{X}$ a continuous compact random operator. Then there exists at least one random fixed point.
\end{thm}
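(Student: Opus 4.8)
The statement is the random analogue of Schauder's fixed point theorem, so the plan is the standard one: for $\bP$-a.e.\ $\omega$ produce the (nonempty) set of deterministic fixed points of $\cT(\omega,\cdot)$ via Schauder's theorem, and then extract a measurable selection from the resulting multifunction. Throughout I would assume — as happens in every situation where the result is invoked — that $\fB$, hence $\mathfrak{X}$, is separable, and (completing $\Sigma$ if necessary) that $(\Omega,\Sigma,\bP)$ is complete; these are precisely the hypotheses under which the measurable selection machinery is available.

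\emph{Step 1 (pointwise existence).} Fix $\omega\in\Omega$. Since $\cT(\omega,\cdot)$ is a compact operator, $\overline{\cT(\omega,\mathfrak{X})}$ is a compact subset of $\mathfrak{X}$; by Mazur's theorem its closed convex hull $C(\omega)$ is a compact convex subset of $\mathfrak{X}$, and by construction $\cT(\omega,C(\omega))\subseteq\overline{\cT(\omega,\mathfrak{X})}\subseteq C(\omega)$. Applying the classical Schauder fixed point theorem to the continuous self-map $\cT(\omega,\cdot)$ of $C(\omega)$ then shows that
\[
\mathcal{F}(\omega):=\{x\in\mathfrak{X}:\ \cT(\omega,x)=x\}=\{x\in C(\omega):\ \cT(\omega,x)=x\}
\]
is nonempty; being a closed subset of the compact set $C(\omega)$, it is in fact compact.

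\emph{Step 2 (measurability and selection).} I would introduce $h(\omega,x):=\Norm{\cT(\omega,x)-x}{\fB}$ on $\Omega\times\mathfrak{X}$. It is measurable in $\omega$ for each fixed $x$ (because $\cT(\cdot,x)\in\cM(\Omega;\fB)$) and continuous in $x$ for each fixed $\omega$, hence Carathéodory, hence $\Sigma\otimes\mathcal{B}(\mathfrak{X})$-measurable. Therefore the graph
\[
\mathrm{Gr}\,\mathcal{F}=\{(\omega,x)\in\Omega\times\mathfrak{X}:\ h(\omega,x)=0\}
\]
is a measurable subset of $\Omega\times\mathfrak{X}$ all of whose $\omega$-sections $\mathcal{F}(\omega)$ are nonempty by Step 1. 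Since $\mathfrak{X}$, being a closed subset of the separable Banach space $\fB$, is Polish and the probability space is complete, the von Neumann--Aumann measurable selection theorem (equivalently: one first checks via the measurable projection theorem that $\omega\mapsto\mathcal{F}(\omega)$ is a measurable closed-valued multifunction and then applies Kuratowski--Ryll-Nardzewski) yields a $\Sigma$-measurable map $X:\Omega\to\mathfrak{X}$ with $X(\omega)\in\mathcal{F}(\omega)$ for every $\omega$, i.e.\ $\cT(\omega,X(\omega))=X(\omega)$. Such an $X\in\cM(\Omega;\mathfrak{X})$ is the desired random fixed point.

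\emph{Main obstacle.} Step 1 is essentially bookkeeping on top of the deterministic Schauder theorem; the real content is Step 2, and specifically the passage from ``$\cT(\omega,\cdot)$ has a fixed point for each $\omega$'' to ``the fixed points can be selected measurably''. There is no explicit description of $\mathcal{F}(\omega)$ — it is merely some $\omega$-dependent compact set — so one cannot write down a selector by hand and must rely on abstract uniformization/selection theorems, which is exactly what forces the two standing reductions (separability of the ambient space, completeness of $\Sigma$) and is the delicate point. An alternative, more constructive route (closer to Itoh's) would bypass the selection theorem by approximating $\cT(\omega,\cdot)$ with finite-rank maps via Schauder projections, solving the resulting finite-dimensional random problems, and passing to the limit along a Cauchy scheme; there the measurability bookkeeping is heavier but more elementary.
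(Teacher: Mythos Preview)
The paper does not prove this theorem at all: it is merely recalled from the literature, with a citation to \cite[Corollary~2.2]{itoh1979random}, and then invoked as a black box in the proof of Theorem~\ref{thm:existence}. There is therefore no ``paper's own proof'' to compare against.

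That said, your proposal is a correct and standard route to the result. Step~1 is clean; the only point worth tightening is that every fixed point $x=\cT(\omega,x)$ automatically lies in $\cT(\omega,\mathfrak{X})\subseteq C(\omega)$, which is why the two descriptions of $\mathcal{F}(\omega)$ agree --- you implicitly use this. In Step~2 your reductions (separability of $\fB$, completeness of $\Sigma$) are exactly what is needed to run the Carath\'eodory $\Rightarrow$ joint measurability $\Rightarrow$ measurable graph $\Rightarrow$ selection chain, and you flag them honestly. The alternative approximation scheme you sketch at the end is indeed closer in spirit to Itoh's original argument, but the selection-theoretic proof you give is the more transparent one and is perfectly adequate here.
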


\subsection{H\"older spaces}
In the following, we denote the elements of $\R^{2d}$ either by $z \in \R^{2d}$ or by $(x,v) \in \R^{d}\times \R^d$.
As pointed out for the first time in \cite{LP}, the correct 
geometrical framework for the analysis of our problem is a Lie group. For this reason, we endow~$\mathbb{R}^{1+ 2d}$ with the group law 
\begin{equation}\label{group_law}
	(t,z) \mapsto (t_0,z_0) \circ (t,z) = (t_0+t, x_0+x+tv_0,  v_0+v), 
\end{equation}
for every $(t_0,z_0)=(t_0,x_0,v_0), (t,z)=(t,x,v) \in \mathbb{R}^{1+2d}$.
Then~$\mathbb{K}:=(\mathbb{R}^{1+2d}, \circ)$ is a Lie group with identity element~$e:=(0,0,0)$ and inverse defined by
$$
(t,z)^{-1}:=(-t, -x+tv,-v), \qquad \forall \, (t,z)=(t,x,v) \in \mathbb{R}^{1+2d}.
$$
Additionally, we introduce a suitable dilation group~$\{\Phi_{r}\}_{r>0}$ defined as
\begin{equation}\label{dilation}
	\Phi_r : \mathbb{R}^{1+2d} \mapsto \mathbb{R}^{1+ 2d}, \quad \Phi_r (t,z):=(r^2 t, r^3 x, r v), \quad \forall r >0,
\end{equation}
which respects the intrinsic scaling of problem \eqref{probmu}.
Hence, from now on we shall consider the homogeneous norm in $\mathbb{R}^{1+2d}$
associated to the group of dilations~$\Phi_r$ defined in~\eqref{dilation} and given by
\begin{equation} \label{groupnorm}
	\|  \cdot \|_{\mathbb{K}} : \mathbb{R}^{1+2d}\to [0,\infty), 
	\quad \| (t,z) \|_{\mathbb{K}}:=  |t |^{\frac{1}{2}} +  \| z \|_{\rm B} :=
	|t |^{\frac{1}{2}} +  \sum_{i=1}^d | x_i |^{\frac{1}{3}} + \sum_{i=1}^d  | v_i |.
\end{equation}

Starting from the traslation \eqref{group_law} and dilation \eqref{dilation} group, we are allowed to introduce a left-invariant $1$-homogeneous quasi-distance on $\mathbb{R}^{2d+1}$ defined as
\begin{align*}
	d_{\K}(t,z; \tau, \zeta) = \| (\tau, \zeta)^{-1} \circ (t,z) \|_{\mathbb{K}}.
\end{align*}
The quasi-distance $d$ is globally equivalent to the control distance of the group and, being a quasi-distance, there exists a structural constant $k>0$ such that
\begin{align*}
	&d_{\K}(t,z; \tau, \zeta) \le k \left(d_{\K}(t,z; s,y) + d_{\K}(s,y; \tau, \zeta) \right) 
	\qquad \, \, \forall (\tau, \zeta),(t,z),(s,y) \in \mathbb{R}^{1+2d}; \\
	&d_{\K}(t,z; \tau, \zeta) \le k \, d_{\K}( \tau, \zeta; t, z) \qquad \qquad \quad \qquad \qquad  \, \, \, \, \forall (\tau, \zeta)\in \mathbb{R}^{1+2d}.
\end{align*}
Furthermore, we are allowed to introduce the associated $d_{\K}$-balls defined as
$$
{\sf B}_r(t_0, z_0):= \{(t,z) \in \mathbb{R}^{1+2d}: d_{\K}(t_0, z_0;t,z)<r\},
$$ 
and, thanks to Remark 1.1 of \cite{BiagiBramanti}, we observe 
\begin{align*}
	|{\sf B}_r (\tau, \zeta)| = |{\sf B}_r(0)| = \omega r^{2+ 4d} ,
\end{align*}
where $\omega := |{\sf B}_1(0)| >0$ and $2+ 4d$ is the 
homogeneous dimension of $\mathbb{R}^{1+ 2d}$.

Finally, we recall the definition of anisotropic H\"older spaces $C^\alpha_{\rm B}(\R^{2d};\R^m)$, where \linebreak $m \in \NN$ and $\alpha \in (0,3]$, as given in \cite[Definition $1.1$]{lucertinigood}.
\begin{defn}
	Let $\alpha \in (0,1]$. We define
	\begin{align} \label{lip-b}
		C^\alpha_{\rm B} (\R^{2d};\R^m) &:= \left\{ \psi_{p,M} \in C_{\rm b} (\R^{2d};\R^m) \, : \, \sup_{z \in \R^{2d}} |\psi(z)|+ \sup_{z_1, z_2 \in \R^{2d}} \frac{|\psi(z_2)-\psi(z_1)|}{\|z_2-z_1\|_{\rm B}^\alpha} < \infty	\right\}.
	\end{align}
	For $\alpha \in (1,2]$, we define $C^\alpha_{\rm B} (\R^{2d};\R^m)$ as the space of functions $\psi:\R^{2d} \to \R^m$ such that:
	\begin{itemize}
		\item[$(i)$] $\psi \in C_{\rm b} (\R^{2d};\R^m)$;
		\item[$(ii)$] $\psi$ is differentiable in $v_1,\dots,v_d$ with $\nabla_v \psi \in C^{\alpha-1}_{\rm B}(\R^{2d};\R^{m \times d})$;
		\item[$(iii)$] It holds
		\begin{equation*}
			\sup_{\substack{x,h,v \in \R^{2d} \\ h \not = 0 }}\frac{|\psi(x+h,v)-\psi(x,v)|}{|h|^{\frac{\alpha}{3}}}<\infty.
		\end{equation*}
	\end{itemize}
	We use the same definition for $\alpha \in (2,3]$: in such a case, this implies that the Hessian tensor $\nabla^2_v \psi$ is well-defined and belongs to $C^{\alpha-2}_{\rm B}(\R^{2d};\R^{m \times d \times d})$. If $m=1$, we directly use the notation $C_{\rm B}^\alpha(\R^{2d})$.\\
\end{defn}

For further information on Lie groups associated to kinetic equations we refer the reader to the 
\cite{AP-survey,BLU, BiagiBramanti}, and the references therein.

\subsection{Lie differentiability}\label{sec:Liediff}
It is known (see for instance \cite{lucertinigood}) that we cannot expect the solutions of \eqref{probmu} to be regular separately in the $t$ and $x$ variables. In order to handle this regularity issue, we need to exploit the Lie structure introduced in the previous subsection. In the following, we denote by $\mathbb{O}_d$ and $\mathbb{I}_d$ respectively the $d \times d$ null and identity matrices. We then introduce the $2d \times 2d$ constant matrix $B$ and the exponential of the group 
\begin{align} \label{matrixB}
	B = \begin{pmatrix}
		\mathbb{O}_d & \mathbb{I}_d \\
		\mathbb{O}_d & \mathbb{O}_d
	\end{pmatrix},
		\qquad 
			e^{\tau B} = \mathbb{I}_{2d} + \tau B = \begin{pmatrix}
			\mathbb{I}_d& \tau \mathbb{I}_d \\
			\mathbb{O}_d & \mathbb{I}_d 
		\end{pmatrix},
		\qquad \forall \tau > 0.
\end{align}
\begin{rem}
	We adopt the ordering $(x,v)$ in place of $(v,x)$ used in \cite{lucertinigood}. For this reason, $B$ is assumed to be upper triangular instead of lower triangular.
\end{rem}

For $f \in C ([0,T)\times \R^{2d})$ we define the Lie derivative in $(t,z) \in [0,T) \times \R^{2d}$ as
\begin{equation}\label{eq:Ydef}
	Yf(t,z) := \lim_{s\to t} \frac{f(s,e^{(s-t)B}z)- f(t,z)}{s-t},
\end{equation}
provided that the limit exists. In such a case, we say that $f$ is Lie differentiable in $(t,z)$.
\begin{rem}\label{YC1}
	It is readily seen that $Yf= (v\cdot\nabla_x  + \partial_t) f$ when $f \in C^1 ((0,T)\times \R^{2d})$: indeed, we just have to observe that $s \mapsto (s, e^{(s-t)B}z)$ is the integral curve of the \textit{drift term} $v\cdot\nabla_x  + \partial_t$.
\end{rem}
Thanks to the previous remark, we can also introduce a notion of Lie differentiability which is analogous to absolute continuity in the Euclidean setting, as introduced in \cite{lucertinigood}.
\begin{defn}
	Let $I \subseteq [0,+\infty)$ be an interval. We denote by ${\rm AC}_Y(I;C_{\rm b}(\R^{2d}))$ the space of continuous functions $f:I\times \R^{2d} \to \R$ such that $f(t,\cdot) \in C_{\rm b}(\R^{2d})$ for any $t \in I$ and there exists a function $Yf \in L^1_{\rm loc}(I;C_{\rm b}(\R^{2d}))$ with the property that for any $s,t \in I$
	\begin{equation}\label{eq:ACYf}
		f(s,e^{(s-t)B}z)=f(t,z)+\int_t^s Yf(\tau,e^{(\tau-t)B}z)d\tau,
	\end{equation}
	where we recall the definition of Bochner-Lebesgue space $L^1_{\rm loc}(I;C_{\rm b}(\R^{2d}))$ as the space of all measurable functions $g:I \to C_{\rm b}(\R^{2d})$ such that for any compact $K \subset I$ it holds
	\begin{equation*}
		\int_{K}\Norm{g(t)}{L^\infty(\R^{2d})}dt<\infty.
	\end{equation*}
	In such a case, $Yf$ is the a.e. Lie derivative of $f$. Clearly, if $I=[0,T]$, then $Yf \in L^1(I;C_{\rm b}(\R^{2d}))$.
\end{defn}
\begin{rem} \label{remark-Y}
	Notice that we still use the notation $Yf$ in the previous definition. Indeed, if $f \in {\rm AC}_Y(I;C_{\rm b}(\R^{2d}))$, then the limit in \eqref{eq:Ydef} exists for a.e. $(t,z) \in I \times \R^{2d}$ and coincides with the a.e. Lie derivative $Yf$. In the usual Euclidean case, this is similar to the request that $f$ is absolutely continuous. However, due to the fact that we are implicitly considering $f:I \to C_{\rm b}(\R^{2d})$ and the Banach space $C_{\rm b}(\R^{2d})$ does not satisfy the Radon-Nykodim property (see \cite[Definition 1.2.5 and Example 1.2.8.a]{arendt2011vector}), we would be asking for a stronger property that implies absolute continuity of $C_{\rm b}(\R^{2d})$-valued functions in the Euclidean setting (see \cite[Propositions 1.2.2 and 1.2.3]{arendt2011vector}). From this point of view, the space ${\rm AC}_Y(I;C_{\rm b}(\R^{2d}))$ does not characterize \textit{absolute continuous functions with respect to the Lie derivative $Y$}, but actually, according to \cite[Proposition 1.2.3]{arendt2011vector}, \textit{$C_{\rm b}(\R^{2d})$-valued absolute continuous functions (on the integral curve of $Y$) that are a.e. Lie differentiable.} 
\end{rem}

We may also consider H\"older spaces with respect to the integral curves of $Y$, as done in \cite[Definition 1.2]{lucertinigood}. 
\begin{defn}
	Let $\alpha \in (0,1]$ and $T>0$. We denote by $C_Y^\alpha((0,T)\times \R^{2d})$ the space of functions $f:(0,T)\times \R^{2d} \to \R$ such that
	\begin{equation*}
		\Norm{f}{C_Y^\alpha((0,T)\times \R^{2d})}:=\sup_{\substack{t,s \in (0,T), \ z \in \R^{2d} \\ t \not = s }}\frac{|f(s,e^{(s-t)B}z)-f(t,z)|}{|t-s|^\alpha}<\infty.
	\end{equation*}
\end{defn}
Combining the H\"older regularity with respect to $Y$ with the anisotropic H\"older spaces previously introduced in this section, we provide the following definition of intrinsic H\"older spaces, as done in \cite[Definition 1.3]{lucertinigood}.
\begin{defn}
	Let $\alpha \in (0,3]$ and $T>0$. We define the intrinsic H\"older space $C_{{\rm B},Y}^{\alpha}((0,T)\times \R^{2d})$ as follows
	\begin{itemize}
		\item[$(i)$] If $\alpha \in (0,1]$, then 
		\begin{equation*}
			C_{{\rm B},Y}^{\alpha}((0,T)\times \R^{2d})=L^\infty((0,T);C_{\rm B}^\alpha(\R^{2d})) \cap C_{Y}^{\alpha/2}((0,T)\times \R^{2d}),
		\end{equation*}
		where we recall the definition of the Bochner-Lebesgue space $L^\infty((0,T);C_{\rm B}^\alpha(\R^{2d}))$ as the space of measurable functions $f:(0,T)\to C_{\rm B}^\alpha(\R^{2d})$ such that
		\begin{equation*}
			\Norm{f}{L^\infty((0,T);C_{\rm B}^\alpha(\R^{2d}))}:=\esssup_{t \in (0,T)}\Norm{f(t)}{C_{\rm B}^\alpha(\R^{2d})}<\infty.
		\end{equation*}
		Furthermore, we define
		\begin{equation*}
			\Norm{f}{C_{{\rm B},Y}^\alpha((0,T)\times \R^{2d})}:=\Norm{f}{L^\infty((0,T);C_{\rm B}^\alpha(\R^{2d}))}+\Norm{f}{C_Y^{\alpha/2}((0,T)\times \R^{2d})}.
		\end{equation*}
		\item[$(ii)$] If $\alpha \in (1,2]$, then $f \in C_{{\rm B},Y}^{\alpha}((0,T)\times \R^{2d})$ if and only if $f \in L^\infty((0,T);C_{\rm B}^\alpha(\R^{2d})) \cap C_{Y}^{\alpha/2}((0,T)\times \R^{2d})$ and $\nabla_v f \in C_{{\rm B},Y}^{\alpha-1}((0,T)\times \R^{2d})$, where $\nabla_v f$ is well-defined since $f(t,\cdot) \in C_{\rm B}^{\alpha}(\R^{2d})$ for a.a. $t \in (0,T)$. Furthermore, we define
		\begin{equation*}
			\Norm{f}{C_{{\rm B},Y}^\alpha((0,T)\times \R^{2d})}:=\Norm{f}{L^\infty((0,T);C_{\rm B}^\alpha(\R^{2d}))}+\Norm{\nabla_v f}{C_{{\rm B},Y}^{\alpha-1}((0,T)\times \R^{2d})}+\Norm{f}{C_Y^{\sfrac{\alpha}{2}}((0,T)\times \R^{2d})}.
		\end{equation*}
		\item[$(iii)$] If $\alpha \in (2,3]$, then $f \in C_{{\rm B},Y}^{\alpha}((0,T)\times \R^{2d})$ if and only if $f \in L^\infty((0,T);C_{\rm B}^\alpha(\R^{2d}))$, $\nabla_v f \in C_{{\rm B},Y}^{\alpha-1}((0,T)\times \R^{2d})$ and $Yf \in L^\infty((0,T);C_{\rm B}^{\alpha-2}(\R^{2d}))$. Furthermore, we define
		\begin{equation*}
			\Norm{f}{C_{{\rm B},Y}^\alpha((0,T)\times \R^{2d})}:=\Norm{f}{L^\infty((0,T);C_{\rm B}^\alpha(\R^{2d}))}+\Norm{\nabla_v f}{C_{{\rm B},Y}^{\alpha-1}((0,T)\times \R^{2d})}+\Norm{Yf}{L^\infty((0,T);C_{\rm B}^{\alpha-2}(\R^{2d}))}.
		\end{equation*}
	\end{itemize}
\end{defn}

\section{Existence results for the dual Kolmogorov equation}
\label{sub:kfp-lin}
In this section we prove some auxiliary results regarding existence and uniqueness of strong 
Lie solutions to a generalized version of the adjoint problem associated to \eqref{probmu}, that is 
\begin{equation}\label{eq:adjeq-app}
	\begin{cases}
		\cL u(t,z)=\Psi(t,z) &(t,z) \in [0,T) \times \R^{2d} \\
		u(T,z)=g(z) & z \in \R^{2d}
	\end{cases}
\end{equation}
where $T>0$, $\Psi:[0,T) \times \R^{2d} \to \R$ and $g:\R^{2d} \to \R$ are suitable measurable functions.
In particular, the adjoint Kolmogorov operator $\cL$ is defined as
\begin{equation*}
	\cL = \mu \Delta_v +Y+F(t,z)\cdot \nabla_v, \ (t,z) \in \R^{1+2d},
\end{equation*}
where we recall that the Lie derivative $Y$ is defined in \eqref{eq:Ydef}, $z=(x,v) \in \R^{2d}$ and $\mu >0$.
For simplicity, let us introduce the notation
\begin{equation*}
	\cA_{\Psi}u(t,z)= 
	\mu \Delta_v +F(t,z)\cdot \nabla_vu(t,z)-\Psi(t,z), \ (t,z) \in \R^{1+2d},
\end{equation*}
so that the equation can be rewritten as $Yu=-\cA_{\Psi}u$. 
\begin{defn} \label{def-sol-3}
	A continuous function $u:[0,T] \times \R^{2d} \to \R$ is said to be a strong Lie solution of \eqref{eq:adjeq-app} if and only if
	\begin{itemize}
		\item[$(i)$] $\partial_{v_i}u, \partial^2_{v_i\, v_j}u \in L^1_{\rm loc}([0,T); C_{\rm b}(\R^{2d}))$ for $i,j=1,\dots,d$;
		\item[$(ii)$] for any $(t,z) \in \R^{2d}$ we have
		\begin{equation*}
			u(t,z)=g(z)+\int_t^T \cA_{\Psi}u(\tau,e^{(\tau-t)B}z)d\tau.
		\end{equation*}
	\end{itemize}
\end{defn}
\begin{rem}\label{rem:pointwise2}
	Such definition is equivalent to the one given in \cite[Definition 1.5]{lucertinigood} once the final condition $u(T,\cdot)\equiv 0$ is prescribed. Furthermore, if $u$ is a strong Lie solution of \eqref{eq:adjeq-app}, then $Yu$ is defined a.e. and we have $Yu \in L^1_{\rm loc}([0,T); C_{\rm b}(\R^{2d}))$, i.e. $u \in {\rm AC}_Y([0,T);C_{\rm b}(\R^{2d}))$, see Remark \ref{remark-Y}. Finally, observe that $u$ satisfies \eqref{eq:adjeq-app} pointwise for a.e. $(t,z) \in (0,T)\times \R^{2d}$ and then it is a classical pointwise solution if $\cA_{\Psi}u$ is continuous.
\end{rem}

As already stated in the introduction of this work, the problem of proving the existence and uniqueness of a solution to \eqref{eq:adjeq-app} originated from 
\cite{polidoro} and its study under mild assumptions for $F$ has been of great interest for
the community in recent years, see for instance the recent works \cite{BiagiBramanti, lucertinigood}. Still, to the best of our knowledge, there is no available result considering assumptions
$(A_0)$, $(A_1)$ and $(A_2)$ for the drift $F$, even in the case of constant coefficients for the matrix $A$. 
In order to reach our goal, we implement an extension of the parametrix method given in \cite{lucertinigood} adapting an idea already 
considered in \cite{DeckKruse} for parabolic equations to the hypoelliptic setting. 
For this purpose, we define the principal part operator $\cK$ of $\cL$ as
\begin{equation*}
	\cK_\lambda= \lambda \Delta_v  +Y, \ (t,z) \in \R^{1+2d}.
\end{equation*}
This is a constant coefficients operator, sharing the same transport term of the operator $\cL$ and with no lower order terms. 
Furthermore, 
it is homogeneous with respect to the group dilations $\Phi_r$, i.e. if $f$ is a solution of $\cK_\lambda f \equiv 0$, 
then the function $f_r(t,z)=f(r^2t,r^3x,rv)$ satisfies the same equation on a rescaled domain. 
Analogously, $f_{(t_0,z_0)}(t,z)=f((t_0,z_0)\circ (t,z))$ still satisfies $\cK_{\lambda}f_{(t_0,z_0)} = 0$.
Additionally, if we define $V_{i,\lambda}:=\sqrt{\lambda}\partial_{v_i}$, we can rewrite
\begin{equation*}
	\cK_\lambda:=\sum_{i=1}^{d}V_{i,\lambda}^2+Y.
\end{equation*}
In particular $\cK_\lambda$ satisfies the H\"ormander condition, i.e. the Lie algebra generated by $V_{i,\lambda}, [V_{i,\lambda},Y]$ for $i=1,\dots,d$, 
denoted by ${\sf Lie}\{V_{i,\lambda},[V_{i,\lambda},Y], \ i=1,\dots,d\}$, is of full rank $1+2d$, where the definition of Lie brackets is $[X,Y]=XY-YX$.

\medskip

Now, we are in a position to recall the definition of fundamental solution of the operator $\cL$ according to \cite[Definition 1.6]{lucertinigood}.
\begin{defn} \label{def-fs2}
	A fundamental solution of $\mathcal{L}$ on $[0,T]\times \R^{2d}$ is a function $p=p(s,z; t, y)$ defined for $0 \le s < t \le T$ and $z,y \in \R^{2d}$ such that:
	\begin{enumerate}
		\item $p(\cdot, \cdot; t, y)$ is a solution of the equation $\mathcal{A}_{\Psi} u + Y u=0$ on $(0,T) \times \R^{2d}$ in the sense of Definition \ref{def-sol-3};
		\item for every $g \in C_{\rm b}(\R^{2d})$ we have
		\begin{equation*}
			\lim_{\substack{(s,z) \to (t,y) \\ s<t}}\int_{\R^{2d}} p(s,z; t, \eta) g(\eta) \, d\eta = g(y),
		\end{equation*}
		i.e. the family of measures $\{p(s,z;t,\eta)d\eta\}_{(s,z) \in [0,t) \times \R^{2d}}$ converges narrowly to $\delta_{y}$ as $(s,z) \to (t,y)$.
	\end{enumerate}
\end{defn}

\medskip

Analogously, for $s<t$ and $z,y \in \R^{2d}$ one defines the fundamental solution of $\cK_\lambda$, that from now on will be denoted as ${\bm P}^{\lambda}(s,z;t,y)$.
More specifically, given $\cK_{\lambda}$ is a constant coefficient operator, we are able to explicitly write the expression of its fundamental solution ${\bm P}^{\lambda}$. Indeed, if we define the covariant matrix associated to $B$, see \eqref{matrixB}, as
\begin{align*}
	\mathcal{C}_\lambda(t)&=\lambda \int_0^te^{\tau B}\begin{pmatrix}
		\mathbb{O}_d & \mathbb{O}_d \\ \mathbb{O}_d & \mathbb{I}_d \end{pmatrix} e^{\tau B^\ast}d\tau=\lambda \begin{pmatrix}
		\displaystyle \frac{t^3}{3}\mathbb{I}_d & \displaystyle \frac{t^2}{2}\mathbb{I}_d \\[10pt] \displaystyle \frac{t^2}{2}\mathbb{I}_d & \displaystyle t \mathbb{I}_d
	\end{pmatrix},
\end{align*}
then one can explicitly compute its determinant and its inverse as
\begin{equation*}
	{\sf det}(\mathcal{C}_\lambda(t))=\frac{\lambda^d t^{4d}}{(12)^d}, \qquad \mathcal{C}_\lambda^{-1}(t)=\frac{1}{\lambda^d}\begin{pmatrix} \displaystyle \frac{12}{t^3}\mathbb{I}_d & \displaystyle -\frac{6}{t^2}\mathbb{I}_d \\[10pt]
		\displaystyle -\frac{6}{t^2}\mathbb{I}_d & \displaystyle \frac{4}{t}\mathbb{I}_d \end{pmatrix} ,
\end{equation*}
so that the fundamental solution ${\bm P}^\lambda$ can be expressed as
\begin{equation}\label{eq:fun-sol-const}
	{\bm P}^\lambda(s,z;t,y)=\frac{\exp\left(-\frac{1}{2}(y-e^{(t-s)B}z)^{\sf t}\mathcal{C}_\lambda^{-1}(t-s)(y-e^{(t-s)B}z)\right)}{\sqrt{(2\pi)^{2d}{\sf det}(\mathcal{C}_\lambda(t-s))}},
\end{equation}
where with ${}^{\sf t}$ we denote the transpose of a matrix (or vector). In particular, for any fixed $s,t \in [0,T]$ with $s<t$ and $z \in \R^{2d}$, we have
\begin{equation}\label{eq:integ1Gamma}
	\int_{\R^{2d}}{\bm P}^\lambda(s,z;t,y)\, dy=1.
\end{equation}
Furthermore, it is not difficult to check that
\begin{equation}\label{eq:integ1Gam2}
	\int_{\R^{2d}}{\bm P}^\lambda(s,z;t,y)\, dz=1.
\end{equation}
Indeed, if we write the integral explicitly and we use the change of variables \linebreak $\zeta=-e^{(t-s)B}z$ (whose absolute value of the Jacobian determinant is $1$), we have
\begin{multline*}
	\int_{\R^{2d}}{\bm P}^\lambda(s,z;t,y)\, dz= \int_{\R^{2d}}\frac{\exp\left(-\frac{1}{2}(y+\zeta)^{\sf t}\mathcal{C}_\lambda^{-1}(t-s)(y+\zeta)\right)}{\sqrt{(2\pi)^{2d}{\sf det}(\mathcal{C}_\lambda(t-s))}}\, d\zeta\\
	=\int_{\R^{2d}}{\bm P}^\lambda(s,-e^{-(t-s)B}y;t,\zeta)d\zeta=1.
\end{multline*}

We also recall the following estimates on ${\bm P}^\mu$, that are given in \cite[Proposition~A1]{lucertinigood}.

\begin{prop}
For all $\delta, \lambda, T>0$ there exists a constant $C >0$ such that for any $0\le s < t \le T$ it holds:
\begin{equation}\label{eq:Gaussdercont}
	\left| \partial_{v_i} {\bm P}^\lambda(s,z;t,y)\right| \le \frac{C}{\sqrt{t-s}}{\bm P}^{\lambda+\delta}(s,z;t,y).
\end{equation}	
\end{prop}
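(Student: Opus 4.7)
The plan is to work directly with the explicit Gaussian formula \eqref{eq:fun-sol-const}. First I would differentiate ${\bm P}^\lambda(s,z;t,y)$ in $v_i$: since the normalization does not depend on $v_i$ and the argument $w:=y-e^{(t-s)B}z=\bigl(y_x-x-(t-s)v,\,y_v-v\bigr)$ satisfies $\partial_{v_i}w=-\bigl((t-s)e_i,\,e_i\bigr)$, a direct chain-rule computation using the explicit block form of $\mathcal{C}_\lambda^{-1}(t-s)$ recalled in the excerpt produces, after a short simplification,
\[
\partial_{v_i}{\bm P}^\lambda(s,z;t,y)=\frac{1}{\lambda}\left(\frac{6}{(t-s)^2}(w_x)_i-\frac{2}{t-s}(w_v)_i\right){\bm P}^\lambda(s,z;t,y).
\]

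The next step is to trade some of the Gaussian decay for this polynomial prefactor by comparing ${\bm P}^\lambda$ to the slightly more spread-out ${\bm P}^{\lambda+\delta}$. Writing $\tilde{Q}(w):=\tfrac12 w^{\sf t}\bigl(\lambda\,\mathcal{C}_\lambda^{-1}(t-s)\bigr)w$ for the $\lambda$-independent positive definite quadratic form in the exponent and letting $Q_\mu(w)=\tilde Q(w)/\mu$, one has $Q_\lambda-Q_{\lambda+\delta}=\tfrac{\delta}{\lambda(\lambda+\delta)}\tilde Q(w)$, while the normalization ratio is $(1+\delta/\lambda)^{d}$. Hence
\[
{\bm P}^\lambda(s,z;t,y)=\left(1+\tfrac{\delta}{\lambda}\right)^{d}{\bm P}^{\lambda+\delta}(s,z;t,y)\,\exp\!\left(-\tfrac{\delta}{\lambda(\lambda+\delta)}\tilde{Q}(w)\right).
\]
Diagonalising the constant matrix $\begin{pmatrix}12/(t-s)^3 & -6/(t-s)^2\\[2pt] -6/(t-s)^2 & 4/(t-s)\end{pmatrix}$ (whose determinant is $12/(t-s)^4>0$) provides a universal $c>0$ with $\tilde Q(w)\ge c\bigl(|w_x|^2/(t-s)^{3}+|w_v|^2/(t-s)\bigr)$.

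To finish, I would factor out the correct intrinsic scales via
\[
\frac{|w_x|}{(t-s)^{2}}=\frac{1}{\sqrt{t-s}}\cdot\frac{|w_x|}{(t-s)^{3/2}},\qquad \frac{|w_v|}{t-s}=\frac{1}{\sqrt{t-s}}\cdot\frac{|w_v|}{(t-s)^{1/2}},
\]
and apply the elementary bound $|a|e^{-\kappa a^{2}}\le (2\kappa e)^{-1/2}$ with $\kappa=c\delta/(\lambda(\lambda+\delta))$ to $a=|w_x|/(t-s)^{3/2}$ and $a=|w_v|/(t-s)^{1/2}$. This absorbs the polynomial prefactor into the extra Gaussian weight and leaves precisely the $1/\sqrt{t-s}$ factor in front of ${\bm P}^{\lambda+\delta}$, with a constant $C=C(\lambda,\delta,d)$ independent of $(s,t,z,y)$ (and in fact of $T$; the role of $T$ in the statement is only to restrict the domain). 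The sole technical point is the positive-definiteness of $\tilde Q$ and the corresponding uniform lower bound in the anisotropic weights $|w_x|^2/(t-s)^3$, $|w_v|^2/(t-s)$, which reflects the intrinsic scaling of the principal part $\cK_\lambda$; every other step is elementary algebra.
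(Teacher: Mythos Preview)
The paper does not actually prove this proposition: it simply recalls the estimate from \cite[Proposition~A.1]{lucertinigood}. Your direct argument from the explicit Gaussian formula \eqref{eq:fun-sol-const} is correct and is precisely the standard computation behind such bounds: differentiate to obtain a polynomial prefactor of the right intrinsic scale, then borrow a $\delta$-fraction of the Gaussian exponent to absorb it via $|a|e^{-\kappa a^2}\le (2\kappa e)^{-1/2}$. Two small remarks: (i) the paper's displayed formulas for ${\sf det}(\mathcal{C}_\lambda)$ and $\mathcal{C}_\lambda^{-1}$ carry a typo ($\lambda^d$ in place of $\lambda^{2d}$ and $\lambda$, respectively); your computation implicitly uses the correct scaling $\mathcal{C}_\lambda^{-1}=\lambda^{-1}\mathcal{C}_1^{-1}$, which is what makes $\tilde Q$ genuinely $\lambda$-independent; (ii) as you observe, the resulting constant depends only on $\lambda,\delta,d$ and not on $T$.
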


In \cite[Theorem~1.1]{lucertinigood} the authors proved the existence of a fundamental solution for $\mathcal{L}$ under boundedness assumptions for $F$, together with some Gaussian bounds in terms of ${\bm P}^\lambda$ for suitable $\lambda>0$. 
Here, our first aim is to extend this result to our set of 
assumptions $(A_0)$, $(A_1)$ and $(A_2)$, i.e. with a {\it locally} $\alpha$-H\"older continuous $F$ having a {\it global} growth of order $\beta<\alpha$, 
specifically getting rid of the boundedness assumption imposed by \cite[Assumption 1.3]{lucertinigood}. 
Given our results are a refinement of the ones proposed in \cite{lucertinigood} obtained via a methodology similar to the one of
\cite{DeckKruse}, here we will only report the core computations required to achieve our results, while everything else will be left to the reader under the guidance of \cite{lucertinigood} and \cite{DeckKruse}.  

\begin{thm}
	\label{existence-fs2}
	Under Assumptions $(A_0)$, $(A_1)$ and $(A_2)$, the operator $\mathcal{L}$ has a fundamental solution $p(s,z; t, y)$, 
	defined for $s,t \in [0,+\infty)$ with $s<t$ and $z,y \in \R^{2d}$ in the sense of Definition \ref{def-fs2}. 
	Moreover, for any $\varepsilon,T > 0$ there exists a constant $C>0$ depending only on $T,\sigma,d,\varepsilon$ 
	and the constants $L,\alpha$ defined in Assumption $(A_2)$, such that 
	\begin{align}
		&p(s,z;t, y) \leq C \, {\bm P}^{\sigma + \varepsilon} (s,z;t, y) \label{gamstimL1}\\
		\label{gamstim} &|\partial_{v_i} p(s,z;t, y)| \leq \frac{C}{\sqrt{t - s}} {\bm P}^{\sigma + \varepsilon} (s,z;t, y)\\
		& |\partial^{2}_{v_iv_j} p(t,z;\mathcal{T}, y)| \leq \frac{C}{t - s} {\bm P}^{\sigma + \varepsilon} (s,z;t, y)
	\end{align}
	for any $t,s \in (0,T)$ with $s<t$, $y,z \in \R^{2d}$ and $i,j= 1, \ldots, d$. Furthermore, for any $T>0$ there exist two constants $\overline{\lambda},\overline{c}>0$ depending only on $T,\sigma,d,\varepsilon$ and the constants $L(T),\alpha(T)$ defined in Assumption $(A_2)$, such that 
	\begin{equation}\label{eq:loweGaus}
		{\overline c} \, {\bm P}^{\overline{\lambda}} (s,z; t, y) \le p(s,z; t, y) 
	\end{equation}
	for any $s,t \in (0,T)$ with $s<t$ and $y,z \in \R^{2d}$. Finally, for any $t,s \in [0,+\infty)$ with $t>s$ and $z \in \R^{2d}$, it holds
	\begin{equation}\label{eq:int1p}
		\int_{\R^{2d}}p(s,z;t,y)dy=1.
	\end{equation}
\end{thm}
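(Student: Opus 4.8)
The plan is to run the Levi parametrix method in the hypoelliptic setting of \cite{lucertinigood}, combined with the device of \cite{DeckKruse} for coping with the unbounded drift. Since the second-order part $\sigma\Delta_v$ of $\cL$ is already a constant-coefficient operator, whose principal part $\cK_\sigma$ has the explicit Gaussian fundamental solution \eqref{eq:fun-sol-const}, the only term that must be frozen is the drift: for a pole $(t,y)$ I would take as parametrix the fundamental solution $\Gamma^{(t,y)}$ of $\cL^{(t,y)}:=\sigma\Delta_v+Y+F(\cdot,y)\cdot\nabla_v$, which is constant in the space variable and is therefore, explicitly, the Gaussian \eqref{eq:fun-sol-const} with covariance $\mathcal{C}_\sigma(t-s)$ but with its mean translated along the characteristic of $v\cdot\nabla_x+F(\cdot,y)\cdot\nabla_v$ issued from $(t,y)$; by $(A_1)$ this shift has anisotropic size $\lesssim (t-s)(1+\|y\|_{\rm B}^{\beta})$. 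One then looks for $p$ in the form
\begin{equation*}
	p(s,z;t,y)=\Gamma^{(t,y)}(s,z;t,y)+\int_s^t\!\int_{\R^{2d}}\Gamma^{(\tau,\eta)}(s,z;\tau,\eta)\,\Phi(\tau,\eta;t,y)\,d\eta\,d\tau,
\end{equation*}
where $\Phi=Z_0+Z_0\otimes\Phi$ solves the Volterra equation with kernel $Z_0(s,z;t,y):=(\cL-\cL^{(t,y)})\Gamma^{(t,y)}(s,z;t,y)=\bigl(F(s,z)-F(s,y)\bigr)\cdot\nabla_v\Gamma^{(t,y)}(s,z;t,y)$, i.e. the mismatch of $\cL$ and $\cL^{(t,y)}$ on the parametrix, and $(A\otimes B)(s,z;t,y):=\int_s^t\int_{\R^{2d}}A(s,z;\tau,\eta)B(\tau,\eta;t,y)\,d\eta\,d\tau$ is the space--time convolution along the group.

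The core of the argument --- and the only genuinely new point with respect to \cite{lucertinigood} --- consists of two Gaussian estimates, \emph{with constants independent of the pole}. First, $\Gamma^{(t,y)}$ must satisfy bounds of the type \eqref{eq:Gaussdercont}, namely $\Gamma^{(t,y)}(s,z;t,y)\le C{\bm P}^{\sigma+\varepsilon}(s,z;t,y)$ together with $|\partial_{v_i}\Gamma^{(t,y)}|\le C(t-s)^{-1/2}{\bm P}^{\sigma+\varepsilon}$ and $|\partial^2_{v_iv_j}\Gamma^{(t,y)}|\le C(t-s)^{-1}{\bm P}^{\sigma+\varepsilon}$; here one uses the explicit form \eqref{eq:fun-sol-const}: the drift shift, of size $\lesssim (t-s)(1+\|y\|_{\rm B}^{\beta})$, is dominated by the intrinsic scale $(t-s)^{1/2}$ as long as $t-s\lesssim(1+\|y\|_{\rm B}^{\beta})^{-2}$, and it is then absorbed into the exponent at the cost of enlarging the diffusion from $\sigma$ to $\sigma+\varepsilon$. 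As in \cite{DeckKruse}, this $y$-dependent smallness forces one to construct $p$ first on short time intervals and then extend it to an arbitrary horizon $T$ via the reproduction identity $\int_{\R^{2d}}p(s,z;\tau,\eta)p(\tau,\eta;t,y)\,d\eta=p(s,z;t,y)$, $s<\tau<t$. Second, $Z_0$ must enjoy a weakly singular bound $|Z_0(s,z;t,y)|\le C(t-s)^{-1+\alpha/2}{\bm P}^{\sigma+\varepsilon}(s,z;t,y)$: one splits $\R^{2d}$ into the region where $\|z-y\|_{\rm B}$ is at most a fixed power of $t-s$ (times a slowly growing function of $\|y\|_{\rm B}$), where $(A_2)$ gives $|F(s,z)-F(s,y)|\le L\|z-y\|_{\rm B}^{\alpha}$ and absorbing the factor $\|z-y\|_{\rm B}^{\alpha}$ into the Gaussian produces precisely the regularizing power $(t-s)^{\alpha/2}$, and into its complement, where only $|F(s,z)-F(s,y)|\le C(2+\|z\|_{\rm B}^{\beta}+\|y\|_{\rm B}^{\beta})$ from $(A_1)$ is available but the Gaussian decay at scale $\gg(t-s)^{1/2}$ more than compensates the polynomial growth, since $\beta<\alpha\le1$; the contribution of the far region is then reabsorbed, and the resulting constant depends only on $\sigma,d,T,\varepsilon$ and the constants $L,\alpha$ of $(A_2)$. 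This near/far dichotomy for $F(s,z)-F(s,y)$, which realizes the heuristic that the unbounded part of $F$ only acts away from the diagonal where the Gaussian kills it, is the crux of the whole proof.

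Once the weakly singular estimate on $Z_0$ is in hand, the rest is a routine adaptation of \cite{lucertinigood} (see also \cite{polidoro}). Iterating the convolution and using the reproduction property of ${\bm P}^{\sigma+\varepsilon}$ yields Mittag--Leffler-type bounds for the iterated kernels $Z_0^{\otimes k}$, summable because $\alpha/2>0$; hence the series defining $\Phi$ converges and $|\Phi|\le C(t-s)^{-1+\alpha/2}{\bm P}^{\sigma+\varepsilon}$, and a last convolution with $\Gamma^{(\tau,\eta)}\le C{\bm P}^{\sigma+\varepsilon}$ gives \eqref{gamstimL1}; differentiating the parametrix representation once and twice in $v$ --- using the derivative bounds for $\Gamma^{(t,y)}$, the H\"older-in-$z$ regularity of $\Phi$ inherited from $(A_2)$, and the usual splitting of the convolution time-integral at the midpoint --- gives \eqref{gamstim} and the analogous second-order estimate. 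That $p(\cdot,\cdot;t,y)$ is a strong Lie solution of $\cL u=0$ in the sense of Definition~\ref{def-sol-3}, and that $p(s,z;t,\eta)\,d\eta\to\delta_y$ narrowly as $(s,z)\to(t,y)$ (property (2) in Definition~\ref{def-fs2}), follow by term-by-term differentiation and telescoping of the series, the correction term carrying total mass $O((t-s)^{\alpha/2})\to0$. The lower bound \eqref{eq:loweGaus} is obtained as in \cite{lucertinigood}: $\Gamma^{(t,y)}$ dominates $\bar c\,{\bm P}^{\bar\lambda}$ for a small $\bar\lambda>0$ and short times, the parametrix remainder is then negligible, and the estimate is propagated to all $t-s\le T$ by the reproduction identity and a chaining argument. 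Finally, \eqref{eq:int1p} follows because $v(s,z):=\int_{\R^{2d}}p(s,z;t,y)\,dy$ solves $\cL v=0$ with terminal value $1$ --- differentiation under the integral being licit by \eqref{gamstim} --- so $v\equiv1$ by uniqueness for the dual problem; equivalently, the normalization \eqref{eq:integ1Gamma} of ${\bm P}^{\sigma}$ is preserved along the parametrix series. The main obstacle throughout is keeping all constants independent of the pole $y$ despite the unbounded drift, which is precisely what the dichotomy together with the short-time-and-reproduction scheme of \cite{DeckKruse} is designed to achieve.
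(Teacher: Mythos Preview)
Your route and the paper's diverge at the very first step. You freeze the drift at the pole and take $\Gamma^{(t,y)}$, the fundamental solution of $\sigma\Delta_v+Y+F(\cdot,y)\cdot\nabla_v$, as parametrix; the first iterate is then the difference $(F(s,z)-F(s,y))\cdot\nabla_v\Gamma^{(t,y)}$. The paper does \emph{not} freeze the drift: its parametrix is simply $\mathbf{P}^\sigma$, the fundamental solution of $\cK_\sigma=\sigma\Delta_v+Y$, and the first iterate is the full $\varphi_1(s,z;t,y)=F(s,z)\cdot\nabla_v\mathbf{P}^\sigma(s,z;t,y)$. The unboundedness $|F(s,z)|\le C(F)(1+\|z\|_{\rm B}^\beta)$ from $(A_1)$ is handled by sacrificing a slice $\varepsilon$ of the Gaussian exponent and proving the pointwise bound $\|z\|_{\rm B}^\beta\exp\bigl(-\tfrac12(y-e^{(t-s)B}z)^{\sf t}\mathcal{C}_\sigma^{-1}(t-s)(y-e^{(t-s)B}z)\bigr)\le H_{\sigma,\beta}(y;t-s)$ (Lemma~\ref{lem:genest}), which trades growth in $z$ for growth in $y$. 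The actual device borrowed from \cite{DeckKruse} is then a summable sequence $\varepsilon_n=\varepsilon n^{\eta-1/\beta}$, $\sum\varepsilon_n<1$: at step $n$ one sacrifices a fresh slice $\varepsilon_{n+1}$, and the accumulated product $\prod_{j\le n}(1+H_{(\sigma+\delta)/\sqrt[d]{\varepsilon_j},\beta}(y;T))$ is beaten by the $1/\Gamma(n/2)$ from the time convolutions (Lemmas~\ref{lemma:eq-induction}--\ref{lemma:unif-conv}). No short-time restriction and no reproduction argument enter; the H\"older hypothesis $(A_2)$ is used only afterwards, for the regularity of $\varphi$ needed to differentiate the parametrix formula (Subsection~\ref{sub:gauss-par}).

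Your scheme, by contrast, builds $y$-dependence into the parametrix itself, and this is where a gap appears. The comparability $\Gamma^{(t,y)}\le C\,\mathbf{P}^{\sigma+\varepsilon}$ you need holds, as you say, only for $t-s\lesssim(1+\|y\|_{\rm B}^\beta)^{-2}$; but the reproduction identity $p(s,z;t,y)=\int p(s,z;\tau,\eta)p(\tau,\eta;t,y)\,d\eta$ integrates over \emph{all} $\eta\in\R^{2d}$, so on each subinterval the relevant threshold is governed by an unbounded intermediate pole --- you cannot fix a single short time and iterate. The near/far dichotomy has the same defect: in the near region you invoke $(A_2)$, but that hypothesis is \emph{local}, so the H\"older constant $L$ depends on the compact set containing $z$ and $y$ and hence on $\|y\|_{\rm B}$. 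Both issues obstruct the $y$-independent constants claimed in \eqref{gamstimL1}--\eqref{gamstim}. The paper sidesteps all of this precisely by not freezing the drift: its parametrix $\mathbf{P}^\sigma$ carries no pole dependence, and only the global growth bound $(A_1)$ enters the convergence proof. (Incidentally, the short-time-and-reproduction idea you attribute to \cite{DeckKruse} is not their contribution; their key input is the $\varepsilon_n$-splitting that the paper adapts.)
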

\begin{rem}
	The integral identity \eqref{eq:int1p} is shown in \cite[Corollary 4.1]{lucertinigood}. Furthermore, the previous theorem has been proven in \cite{lucertinigood} for a fixed time horizon $T>0$, but it clearly holds under $(A_0)$, $(A_1)$  and $(A_2)$ by considering any time horizon $T>0$ and then observing that for $[0,T]\times \R^{2d}$ we are under the hypotheses of \cite[Theorem~1.1]{lucertinigood}. Hence, we know that for ay $T>0$ there exists a fundamental solution $p_T(s,z;t,y)$ for $s,t \in (0,T)$ with $s<t$ ad $y,z \in \R^{2d}$. The fact that in \cite[Theorem~1.1]{lucertinigood} the fundamental solution $p_T$ is constructed by means of the parametrix method guarantees that if we fix $T_0,T_1>0$ then it holds $p_{T_0}(s,z;t,y)=p_{T_1}(s,z;t,y)$ for any $s,t \in (0,\min\{T_0,T_1\})$ with $s<t$ and $y,z \in \R^{2d}$. Hence, the fundamental solution $p$ considered in Theorem \ref{existence-fs2} is defined as $p(s,z;t,y):=p_T(s,z;t,y)$ for any $T>0$, $t,s \in (0,T)$ with $s<t$ and $y,z \in \R^{2d}$.
\end{rem}
\begin{rem}\label{rem:nonconst}
	Clearly, the result holds true also when considering a more general diffusion governed by a symmetric matrix $(a_{i,j})_{i,j=1,\ldots, d}$ with a structural 
	uniformly ellipticity requirement and such that $a_{i,j} \in L^{\infty}([0, T]; C_b(\R^{2d})) \cap L^{\infty}([0,T]; C^{\alpha}_B(\R^{2d}))$ for every 
	$i,j =1, \ldots, d$, with also $\Psi \in L^{\infty}([0, T]; C_b(\R^{2d})) \cap L^{\infty}([0,T]; C^{\alpha}_B(\R^{2d}))$. Indeed, 
	the proof would follow as in \cite{lucertinigood} with the adaptation presented here for the lower order coefficient 
	$F$ extended also to the treatment of $\Psi$. Still, for the sake of readability, 
	we focus here only on the case of our interest. 
\end{rem}

\subsection{Proof of Theorem \ref{existence-fs2}}
	The proof of this result is a variation of both \cite[Theorem~1.1]{lucertinigood} and \cite[Theorem~2.1]{DeckKruse} and it is based on the Levy parametrix method, which presents many technicalities.
	For this reason, we will only present the proof of this result with unbounded coefficients for our specific case, but bear in mind it is possible to extend it to general Kolmogorov operators with unbounded coefficients. 
		
	Following the notation of \cite{lucertinigood}, we set our kernel ${\bm P}^{\sigma}(s,z; t, y)$ as the {\it parametrix}, whose explicit expression is the one 
	introduced in \eqref{eq:fun-sol-const}. Once the parametrix function is introduced, the Levy parametrix method prescribes that a fundamental solution to $\cL$ is of the form 
	\begin{align}
		\label{parametrix1}
		p(s,z; t, y) = {\bf P}^\sigma(s,z; t, y) + \int \limits_s^t  \int \limits_{\R^{2d}} {\bf P}^\sigma (s,z; \tau, \eta) \varphi(\tau, \eta; t,y) d \eta d \tau,
	\end{align}
	where $\varphi$ is an unknown function. Now, if we assume $p$ is indeed the fundamental solution to $\cL$, then we obtain
	\begin{align*}
		0 = \cL p(s,z ; t, y) = \cL {\bf P}^\sigma (s,z; t, y) +  \cL \int \limits_s^t  \int \limits_{\R^{2d}} {\bf P}^\sigma (s,z; \tau, \eta) \varphi(\tau, \eta; t,y) d \eta d \tau.
	\end{align*}
	Now we rewrite $\cL=\cK_\sigma+F(s,z)\cdot \nabla_v+Y$ and we recall that $(\mathcal{K}_\sigma+Y)\mathbf{P}^\sigma=0$. Hence $\varphi$ is the solution of the Volterra equation
	\begin{align*}
		\varphi(s,z;t,y)=F(s,z)\cdot\nabla_v {\bf P}^\sigma (s,z; t, y) +  \int \limits_s^t  \int \limits_{\R^{2d}} F(s,z)\cdot \nabla_v{\bf P}^\sigma (s,z; \tau, \eta) \varphi(\tau, \eta; t,y) d \eta d \tau,
	\end{align*}
	which can be determined through an iterative procedure that leads to the following Neumann series representation 
	\begin{align} \label{series-par}
		\varphi(s,z;t,y)= \sum \limits_{k \ge 1} \varphi_k(s,z; t,y) ,
	\end{align}
	where 
	\begin{equation}\label{eq:neumann}
		\begin{cases}
			\varphi_1(s,z; t,y) = F(s,z) \cdot \nabla_v {\bm P}^\sigma(s,z; t,y) , \\[20pt]
			\displaystyle \varphi_k(s,z; t,y) = \int \limits_s^t \int \limits_{\R^{2d}} 
			F(s,z) \cdot \nabla_v {\bm P}^\sigma(s,z; t,y)\, \varphi_k(\tau, \eta; t,y)  d \eta d\tau, \qquad k \in \mathbb{N}.
		\end{cases}
	\end{equation}
	Now, for the sake of brevity, we assume $\Psi \equiv 0$ bearing in mind that nothing in the proof would change if this was not the case, and we would have to proceed 
	exactly as in \cite{lucertinigood} to estimate the terms related to it. 
	In order to complete the proof of our result, we have to show that:
	\begin{enumerate} 
		\item the series \eqref{series-par} is uniformly convergent on compact subsets of $D_T := \{ (s,z;t,y) \, : \, 0 < s < t < T \}$ 
		(see forthcoming Subsection \ref{sub:conv}); moreover, upper bounds and H\"older estimates for $\varphi$ hold, 
		see following Subsection \ref{sub:gauss-par};
		\item $p$ defined in \eqref{parametrix1} is indeed a fundamental solution of $\cL$ and satisfies Gaussian estimates \eqref{gamstimL1}, see 
		Subsection \ref{sub:fun-sol}. 
	\end{enumerate}
	
	\subsubsection{\emph{(1)} Proof of the convergence of the Neumann series } 
	\label{sub:conv}
	In order to complete the proof of the uniform convergence of the series \eqref{series-par}, 
	one has to proceed as in \cite[Proposition 2.1]{lucertinigood} with some major changes directly linked to 
	the lack of boundedness for $F$, that we will explicitly exploit here. 
	In particular, we will not address at all how to deal with the term involving $\Psi$, since it 
	directly follows as in \cite[Proposition 2.1]{lucertinigood} (compare with the term $a$
	in their notation), but we will explicitly treat $F$.
	
	For a fixed $\varepsilon \in (0,1)$ and $\delta>0$ 
	for each $i=1, \ldots, d$ it holds 
	\begin{align} \label{parametrix-1}
		| F_i(s,z)\partial_{v_i} {\bf P}^\sigma(s,z; t,y)| 
		&\le \frac{C_T(F) C\left(1 + \| z\|_B^{\beta}\right){\bm P}^{\frac{\sigma+\delta}{\sqrt[d]{1-\varepsilon}}} (s,z; t,y)}{|t-s|^{\frac12}\sqrt{1-\varepsilon}}  \\
		&\quad \times \exp\left(-\left(-\frac{1}{2}(y-e^{(t-s)B}z)^{\sf t}\mathcal{C}_{\frac{\sigma+\delta}{\sqrt[d]{\varepsilon}}}^{-1}(t-s)(y-e^{(t-s)B}z)\right)\right)
	\end{align}
	where $C$ is the constant appearing in \eqref{eq:Gaussdercont} and $\beta$ is the parameter of assumption $(A_1)$.
	
	We now prove a preliminary Lemma concerning the product of the first two function on the right hand side,
	which can be seen as an extension of \cite[Lemma 3.5]{francesco2005class}.
	\begin{lem}\label{lem:genest}
		For any $z \in \R^{2d}$, $\sigma>0$ and $0 \le s<t$,
		\begin{align}\label{eq:genest}
			\| z \|_B^{\beta}\,  \exp\left(-\frac{1}{2}(y-e^{(t-s)B}z)^{\sf t} \mathcal{C}_{\sigma}^{-1}(t-s)(y-e^{(t-s)B}z)\right) \le 
			H_{\sigma,\beta}(y;t-s)
		\end{align}
		where, for $y=(\xi,\nu) \in \R^{2d}$ and $t>0$,
		\begin{equation*}
			H_{\sigma,\beta}(y;t)=C_\beta\left(|\nu| + \left( |\xi| + t|\nu|  \right)^{\frac13} + \left( (\beta \sigma^d + \beta^{\frac13}\sigma^{\frac{d}{3}})t \right)^{\frac12} \right)^{\beta}
		\end{equation*}
		and $C_\beta>0$ is a suitable constant depending only on $\beta$.
	\end{lem}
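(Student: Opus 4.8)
The plan is to reduce \eqref{eq:genest} to a chain of elementary inequalities, the two workhorses being the subadditivity of $r\mapsto r^{1/3}$ and of $r\mapsto r^{\beta}$ (recall $\beta\in(0,1)$), and the elementary bound $r^{a}e^{-r}\le 1$, valid for all $r\ge 0$ whenever $0<a\le 1$. This is the natural extension of \cite[Lemma 3.5]{francesco2005class}, where the Gaussian decay in the "old" variable $z$ is traded for a polynomial growth in the "new" variable $y$ and a power of the time increment.

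First I would set $\tau:=t-s$, write $z=(x,v)$, $y=(\xi,\nu)$, and introduce the displacement $w:=y-e^{\tau B}z=(w_1,w_2)$ with $w_1=\xi-x-\tau v$ and $w_2=\nu-v$. Inverting these relations gives $v=\nu-w_2$ and $x=\xi-\tau\nu+\tau w_2-w_1$; substituting into the definition of $\|\cdot\|_{\rm B}$ and using the triangle inequality together with subadditivity of the cube root yields a constant $C$ (depending on $d$) with
\[
\|z\|_{\rm B}\ \le\ C\Big(|\nu|+(|\xi|+\tau|\nu|)^{1/3}\Big)\ +\ C\Big(|w_1|^{1/3}+\tau^{1/3}|w_2|^{1/3}+|w_2|\Big).
\]
The first group is already of the type occurring in $H_{\sigma,\beta}$; the second group is the one that must be absorbed by the Gaussian weight.

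Next I would record the size of $w$ in terms of the exponent. Completing the square in $w_1$ and using the explicit form of $\mathcal{C}_\sigma^{-1}(\tau)$ displayed before \eqref{eq:fun-sol-const}, one gets
\[
E:=\tfrac12\,(y-e^{\tau B}z)^{\sf t}\mathcal{C}_\sigma^{-1}(\tau)(y-e^{\tau B}z)=\frac{1}{2\sigma^{d}}\left(\frac{12}{\tau^{3}}\Big|w_1-\tfrac{\tau}{2}w_2\Big|^{2}+\frac{1}{\tau}|w_2|^{2}\right),
\]
so that $|w_2|\le C\sigma^{d/2}\tau^{1/2}E^{1/2}$ and, since $|w_1|\le|w_1-\tfrac{\tau}{2}w_2|+\tfrac{\tau}{2}|w_2|$, also $|w_1|\le C\sigma^{d/2}\tau^{3/2}E^{1/2}$. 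Hence the second group above is bounded by $C\big(\sigma^{d/6}\tau^{1/2}E^{1/6}+\sigma^{d/2}\tau^{1/2}E^{1/2}\big)$.

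Finally I would assemble the pieces. Raising the bound for $\|z\|_{\rm B}$ to the power $\beta$ and using subadditivity of $r\mapsto r^{\beta}$, one splits $\|z\|_{\rm B}^{\beta}$ into a term depending only on $(y,\tau)$ — which is left untouched when multiplied by $e^{-E}\le 1$ and is dominated by the first two summands of $H_{\sigma,\beta}$ — and a term bounded by $C\big(\sigma^{d\beta/6}\tau^{\beta/2}E^{\beta/6}+\sigma^{d\beta/2}\tau^{\beta/2}E^{\beta/2}\big)$. Multiplying the latter by $e^{-E}$ and invoking $E^{\beta/6}e^{-E}\le 1$ and $E^{\beta/2}e^{-E}\le 1$ leaves a quantity of order $\tau^{\beta/2}\big(\sigma^{d\beta/6}+\sigma^{d\beta/2}\big)$, which, using $\beta^{\beta/2},\beta^{\beta/6}\ge c_0>0$ uniformly in $\beta\in(0,1)$ and one further use of subadditivity, is $\lesssim\big((\beta\sigma^{d}+\beta^{1/3}\sigma^{d/3})\tau\big)^{\beta/2}$. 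Collecting the three groups and enlarging the constant produces exactly $H_{\sigma,\beta}(y;\tau)$. The only genuinely delicate point is the bookkeeping in this last step: the powers of $\sigma$, $\tau$ and $\beta$ must be tracked so that all contributions fit inside the single outer power $(\,\cdot\,)^{\beta}$ defining $H_{\sigma,\beta}$; everything else is a routine estimate.
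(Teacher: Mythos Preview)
Your argument is correct and rests on the same mechanism as the paper's proof: trade the $z$-dependence for a $y$-dependent part plus a remainder controlled by the Gaussian exponent, then absorb the remainder via $r^{a}e^{-r}\le 1$. The paper organizes the bookkeeping a little differently: it first splits $\|z\|_B^{\beta}\le |x|^{\beta/3}+|v|^{\beta}$ and then maximizes each summand against its own Gaussian factor separately, invoking the parabolic estimate \cite[Eq.~(3.12)]{DeckKruse} for the $|v|^{\beta}$ piece and introducing the midpoint variables $w_z=x-\tfrac{\tau}{2}v$, $w_y=\xi+\tfrac{\tau}{2}\nu$ to handle the $|x|^{\beta/3}$ piece. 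Your route through the full exponent $E$ and the displacement $w=y-e^{\tau B}z$ is more unified and avoids that change of variables; in return the paper's split makes the provenance of the two terms $\beta\sigma^{d}$ and $\beta^{1/3}\sigma^{d/3}$ inside $H_{\sigma,\beta}$ slightly more transparent. Either way the final estimate is the same, and your observation that $\beta^{\beta/2},\beta^{\beta/6}\ge c_0>0$ uniformly on $(0,1)$ is exactly what is needed to fit the $\sigma^{d\beta/2}\tau^{\beta/2}$ and $\sigma^{d\beta/6}\tau^{\beta/2}$ contributions inside the single bracket $(\beta\sigma^{d}+\beta^{1/3}\sigma^{d/3})\tau$.
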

	\begin{proof}
		First, notice that
		\begin{equation*}
			\mathcal{C}_{\sigma}^{-1}(t-s)=\frac{1}{\sigma^d}\mathcal{C}_{1}^{-1}(t-s)
		\end{equation*}
		and
		\begin{multline}\label{eq:bellazio}
			\frac{1}{2}(y-e^{(t-s)B}z)^{\sf t} \mathcal{C}_{1}^{-1}(t-s)(y-e^{(t-s)B}z)\\
			= \frac{6}{(t-s)^3} \left((\xi - x) + \frac{t-s}{2} (\nu + v)\right)^2+\frac{1}{2(t-s)}(v-\nu)^2.
		\end{multline}
		Hence	
		\begin{align} \label{all-in-all-1}
			\| z \|_B^{\beta} &\exp\left(-\frac{1}{2}(y-e^{(t-s)B}z)^{\sf t} \mathcal{C}_{\sigma}^{-1}(t-s)(y-e^{(t-s)B}z)\right) \\
			&\le
			(| x |^{\frac{\beta}{3}} +  | v |^{\beta} )\exp\left(- \frac{6}{\sigma^d(t-s)^3} ((\xi - x) + \frac{t-s}{2} (\nu + v))^2\right)\\
			&\quad \times \exp\left(-\frac{1}{2\sigma^d (t-s)}(v-\nu)^2\right),	
		\end{align}
		where we recall $z=(x,v), y=(\xi, \nu) \in \R^{2d}$ and by $| \cdot |$ we denote the classic Euclidean norm in $\R^d$. Let us begin with the second term.  We have
		\begin{align} \nonumber
			|v|^{\beta} &\exp\left(- \frac{6}{\sigma^d(t-s)^3} ((\xi - x) + \frac{t-s}{2} (\nu + v))^2\right)
			\exp\left(-\frac{1}{2\sigma^d (t-s)}(v-\nu)^2\right)  \\
			&\qquad \le |v|^{\beta} \exp\left(-\frac{1}{2\sigma_1^d (t-s)}(v-\nu)^2\right) \\  \label{all-in-all-2}
			& \qquad \le \left( | \nu | + \left( \beta \sigma^d (t-s) \right)^{\frac12} \right)^{\beta},
		\end{align}
		where in the first step we considered that the exponential is bounded from above by $1$, and 
		in the second step we maximized the function as in the parabolic setting , see \cite[Equation (3.12)]{DeckKruse}.
		As for the first term, introducing a couple of auxiliary variables defined as 
		\begin{align*}
			w_z = x - \frac{t-s}{2} v , \qquad w_y = \xi + \frac{t-s}{2}\nu,
		\end{align*}	 
		we rewrite it as 
		\begin{align} \nonumber
			 \Big | w_z + v \frac{t-s}{2} \Big |^{\frac{\beta}{3}} &\exp\left(- \frac{6}{\sigma^d(t-s)^3} (w_z - w_y)^2\right)
			\exp\left(-\frac{1}{2\sigma^d (t-s)}(v-\nu)^2\right) \\ 
			\label{all-in-all-3}
			&\le C_\beta\left( |\xi| + (t-s)|\nu| + \left( \beta \sigma^d(t-s)^3\right)^{\frac12} \right)^{\frac{\beta}{3}} 
		\end{align}
		where we estimated each piece analogously as before and in the last but one line we substituted again the definition of $w_z$. Eventually, combining \eqref{all-in-all-1}, \eqref{all-in-all-2}, \eqref{all-in-all-3} we get \eqref{eq:genest}.
	\end{proof}

	Then \eqref{parametrix-1} becomes
	\begin{align*}
	&| F_i(s,z)\partial_{v_i} {\bf P}^\sigma(s,z; t,y)| \le  
		 \frac{C_T(F)C}{\sqrt{1-\varepsilon}}(1+H_{\frac{\sigma+\delta}{\sqrt[d]{\varepsilon}},\beta}(y;t-s))  		
		 \frac{{\bm P}^{\frac{\sigma+\delta}{\sqrt[d]{1-\varepsilon}}} (s,z; t,y)}{|t-s|^{\frac12}}.
	\end{align*}

	Hence, for any $\varepsilon \in (0,1)$,
	we obtain the following estimate for the first term of \eqref{series-par}:
	\begin{align}\label{eq:alleps}
		&|\varphi_1 (s,z; t,y)| \le  
		 \frac{dC_T(F)C}{\sqrt{1-\varepsilon}}(1+H_{\frac{\sigma+\delta}{\sqrt[d]{\varepsilon}},\beta}(y;T))  		
		 \frac{{\bm P}^{\frac{\sigma+\delta}{\sqrt[d]{1-\varepsilon}}} (s,z; t,y)}{|t-s|^{\frac12}},
	\end{align}
	where we recall that $C$ is independent of the choice of $\varepsilon$.
	Now, the idea is to proceed by induction and to estimate each term of \eqref{series-par} as previously done for $\varphi_1$. More precisely, we prove the following estimate.
	\begin{lem} \label{lemma:eq-induction}
		Let $(\varepsilon_j)_{j \ge 1}$ be a nonincreasing nonnegative sequence such that $S:=\sum_{j=1}^{+\infty}\varepsilon_j<1$. Then for any $n \ge 1$ and $(s,z;t,y) \in D_T$ it holds
		\begin{align} \label{eq:induction} 
				&|\varphi_n(s,z; t,y)|  \\ \nonumber
				&\qquad \le K^{n-1} \frac{\pi^{\frac{n}{2}}  (t-s)^{\frac{n-2}{2}} }{\Gamma(n/2)} \frac{d^nC_T^n(F)C^n}{(1-S)^{\frac{n}{2}} }  {\bm P}^{\frac{\sigma+\delta}{\sqrt[d]{1-\sum_{j=1}^n \varepsilon_j}}} (s,z; t,y)\prod_{j=1}^n  \left(1+H_{\frac{\sigma+\delta}{\sqrt[d]{\varepsilon_j}},\beta}(y;T)\right),
		\end{align}
		where $C_\beta$ is the constant given by Lemma \ref{lem:genest} and
		\begin{align}
				&K=\sqrt{1+\frac{\varepsilon_1}{1-S}}\left(1+J_\beta\left(T;\frac{\sigma+\delta}{\sqrt[d]{\varepsilon_1}} \right) +C_\beta (1+T^{\beta/3})\right), \label{eq:kay}\\	 
				&\qquad \text{with } J_\beta\left(T;\sigma\right)=C_\beta\left(T^{\frac{\beta}{3}}+\left((\beta \sigma^d+\beta^{\frac{1}{3}}\sigma^{\frac{d}{3}})T\right)^{\frac{\beta}{2}}\right). \label{eq:jay}
		\end{align}
	\end{lem}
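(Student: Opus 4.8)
The plan is to prove the estimate \eqref{eq:induction} by induction on $n$, following the scheme of \cite[Proposition 2.1]{lucertinigood} but carrying along the extra factors $H_{\cdot,\beta}(y;T)$ that encode the unbounded growth of $F$. The base case $n=1$ is precisely \eqref{eq:alleps}: one checks that the constant in front, $\frac{dC_T(F)C}{\sqrt{1-\varepsilon_1}}$, is dominated by $\frac{d C_T(F) C}{\sqrt{1-S}}$ times the remaining $K^0=1$ and $\frac{\pi^{1/2}(t-s)^{-1/2}}{\Gamma(1/2)}=\frac{1}{\sqrt{\pi(t-s)}}\cdot \pi^{1/2}$, absorbing numerical constants into $C_\beta$ as needed.

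For the inductive step, I would assume \eqref{eq:induction} for $n$ and plug it into the recursive definition \eqref{eq:neumann} of $\varphi_{n+1}$, together with the pointwise bound on $|F_i(s,z)\partial_{v_i}\mathbf{P}^\sigma(s,z;\tau,\eta)|$ coming from \eqref{parametrix-1} and Lemma \ref{lem:genest}. This produces an integral over $(\tau,\eta)\in(s,t)\times\R^{2d}$ of the product of two shifted Gaussians $\mathbf{P}^{(\sigma+\delta)/\sqrt[d]{1-\varepsilon_{n+1}}}(s,z;\tau,\eta)$ and $\mathbf{P}^{(\sigma+\delta)/\sqrt[d]{1-\sum_{j\le n}\varepsilon_j}}(\tau,\eta;t,y)$, multiplied by the growth factors $H_{\cdot,\beta}(\eta;T-\tau)$ from $\varphi_1$-type estimate at the intermediate point $\eta$ and $\prod_{j=1}^n H_{\cdot,\beta}(y;T)$ from the inductive hypothesis, and by the singular weight $(\tau-s)^{-1/2}(t-\tau)^{(n-2)/2}$. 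The key analytic facts I would invoke are: (i) the Chapman–Kolmogorov type inequality for Gaussians with varying parameters — the product of the two Gaussians integrates in $\eta$ to a Gaussian $\mathbf{P}^{(\sigma+\delta)/\sqrt[d]{1-\sum_{j\le n+1}\varepsilon_j}}(s,z;t,y)$ up to a multiplicative constant controlled by $\sqrt{1+\varepsilon_{n+1}/(1-S)}$, which is exactly the first factor in $K$ in \eqref{eq:kay}; (ii) the growth factor $H_{\cdot,\beta}(\eta;T-\tau)$ at the intermediate point must be absorbed — one uses that integrating $H_{\cdot,\beta}(\eta;\cdot)$ against the Gaussian $\mathbf{P}^{\cdot}(\tau,\eta;t,y)\,d\eta$ produces a bound of the form $1+J_\beta(T;\cdot)+C_\beta(1+T^{\beta/3})$ times $H$ evaluated at $y$, giving the second factor of $K$ and raising the product $\prod_{j=1}^n$ to $\prod_{j=1}^{n+1}$; (iii) the Beta-function identity $\int_s^t(\tau-s)^{-1/2}(t-\tau)^{(n-2)/2}\,d\tau = B(1/2,n/2)(t-s)^{(n-1)/2}=\frac{\Gamma(1/2)\Gamma(n/2)}{\Gamma((n+1)/2)}(t-s)^{(n-1)/2}$, which upgrades $(t-s)^{(n-2)/2}/\Gamma(n/2)$ to $(t-s)^{(n-1)/2}/\Gamma((n+1)/2)$ and supplies the $\pi^{1/2}$ needed to pass from $\pi^{n/2}$ to $\pi^{(n+1)/2}$.

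The main obstacle will be step (ii): controlling the growth factor at the intermediate integration point $\eta$. Unlike the bounded-coefficient case of \cite{lucertinigood}, here each application of the Volterra kernel introduces a new factor depending on $\|\eta\|_B^\beta$, and one must show that integrating this against the Gaussian transition density does not destroy the Gaussian tail nor the summability of the series — it only costs a controlled multiplicative constant that can be folded into $K$. Concretely, one needs an estimate of the type $\int_{\R^{2d}} H_{\sigma',\beta}(\eta;t-\tau)\,\mathbf{P}^{\sigma''}(\tau,\eta;t,y)\,d\eta \le \big(1 + J_\beta(T;\sigma') + C_\beta(1+T^{\beta/3})\big)\,H_{\sigma'',\beta}(y;\cdot)$ uniformly for $0<\tau<t<T$; this follows by writing $H_{\sigma',\beta}(\eta;t-\tau)$ explicitly in the variables $\eta=(\xi_\eta,\nu_\eta)$, using the subadditivity $(a+b)^\beta\le a^\beta+b^\beta$ for $\beta\in(0,1)$ and $(a+b)^{\beta/3}\le a^{\beta/3}+b^{\beta/3}$ to split it, and then computing the Gaussian moments $\int |\nu_\eta-\nu|^\beta\,\mathbf{P}^{\sigma''}\,d\eta$ and $\int (|\xi_\eta-\xi|+\cdots)^{\beta/3}\,\mathbf{P}^{\sigma''}\,d\eta$ exactly as in the one-step bound, using that the shifted Gaussian $\mathbf{P}^{\sigma''}(\tau,\eta;t,y)$ concentrates $\eta$ near $e^{-(t-\tau)B}y$ with spread of order $(t-\tau)$ in $\nu$ and $(t-\tau)^3$ in $\xi$. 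Once (i), (ii), (iii) are assembled, multiplying the three contributions $K^{n-1}\cdot K = K^n$ and collecting the numerical factors gives \eqref{eq:induction} with $n$ replaced by $n+1$, closing the induction; the monotonicity hypothesis on $(\varepsilon_j)$ is used to ensure $\sum_{j\le n+1}\varepsilon_j\le S<1$ throughout so that all the Gaussian parameters stay finite and the constant $\sqrt{1+\varepsilon_{n+1}/(1-S)}\le\sqrt{1+\varepsilon_1/(1-S)}$ is bounded by the $K$ defined in \eqref{eq:kay}.
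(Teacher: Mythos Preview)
Your overall strategy---induction, the Beta-function identity in (iii), and Chapman--Kolmogorov in (i)---matches the paper. The point of divergence, and where your sketch has a gap, is step~(ii).

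You propose to absorb the intermediate factor $H_{\cdot,\beta}(\eta;\cdot)$ by \emph{integrating} it against the forward Gaussian $\mathbf{P}^{\cdot}(\tau,\eta;t,y)\,d\eta$ and computing Gaussian moments $\int|\nu_\eta-\nu|^\beta\mathbf{P}\,d\eta$ etc. But that Gaussian is the same one you need, still as a function of $\eta$, to pair with $\mathbf{P}^{\cdot}(s,z;\tau,\eta)$ in the Chapman--Kolmogorov step; you cannot use it for both. The paper's mechanism is a \emph{pointwise} bound, not an integral one: after splitting $1+H_{\cdot,\beta}(\eta;T)$ via \eqref{eq:acca} into a constant piece $1+J_\beta(T;\cdot)$ and a piece $C_\beta(1+T^{\beta/3})\|\eta\|_B^\beta$, one invokes the identity \eqref{eq:prectosucc} to write
\[
\mathbf{P}^{\frac{\sigma+\delta}{\sqrt[d]{1-\sum_{j\le n}\varepsilon_j}}}(\tau,\eta;t,y)
=\sqrt{\tfrac{1-\sum_{j\le n}\varepsilon_j}{1-\sum_{j\le n+1}\varepsilon_j}}\;
e^{-\frac12(\cdots)}\;
\mathbf{P}^{\frac{\sigma+\delta}{\sqrt[d]{1-\sum_{j\le n+1}\varepsilon_j}}}(\tau,\eta;t,y),
\]
where the extracted exponential has covariance parameter $\frac{\sigma+\delta}{\sqrt[d]{\varepsilon_{n+1}}}$, and then applies Lemma~\ref{lem:genest} \emph{a second time} to the product $\|\eta\|_B^\beta\cdot e^{-\frac12(\cdots)}$. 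This yields the pointwise estimate $\|\eta\|_B^\beta\,\mathbf{P}^{\lambda_n}(\tau,\eta;t,y)\le \sqrt{1+\tfrac{\varepsilon_1}{1-S}}\,H_{\frac{\sigma+\delta}{\sqrt[d]{\varepsilon_{n+1}}},\beta}(y;T)\,\mathbf{P}^{\lambda_{n+1}}(\tau,\eta;t,y)$, after which both Gaussians carry the same parameter and Chapman--Kolmogorov applies cleanly. This is also what produces the new factor $1+H_{\frac{\sigma+\delta}{\sqrt[d]{\varepsilon_{n+1}}},\beta}(y;T)$ with precisely the parameter $\varepsilon_{n+1}$, extending the product from $j\le n$ to $j\le n+1$; a moment computation would not naturally deliver this specific form, nor the structure of $K$ in \eqref{eq:kay}.
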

	\begin{proof}
		We proceed by induction. First, we observe that by applying \eqref{eq:alleps} with $\varepsilon= \varepsilon_1 < S$,
		we easily get \eqref{eq:induction} for $n=1$. 
		
		In order to show the induction step, let us assume that \eqref{eq:induction} holds for some integer $n$ 
		and let us prove it holds for $n+1$.	
		Recalling the definition of $\varphi_n(s,z; t,y)$ as in \eqref{eq:neumann} we have
			\begin{align*}
			&|\varphi_{n+1} (s,z;t,y)| \\
			&\le \int \limits_s^t \int \limits_{\R^{2d}} 
			|F(s,z) \cdot \nabla_v   {\bm P} (s,z; \tau,\eta)|\, |\varphi_n(\tau, \eta; t,y) | d \eta d\tau \\
			&\le  K^{n-1} \frac{\pi^{\frac{n}{2}}}{\Gamma(n/2)} \frac{d^{n+1}C_T^{n+1}(F)C^{n+1}}{(1-S)^{\frac{n+1}{2}}}    
			\prod_{j=1}^n  \left(1+H_{\frac{\sigma+\delta}{\sqrt[d]{\varepsilon_j}},\beta}(y;T)\right)  \\
			&\quad\left(\int \limits_s^t  \frac{(t-\tau)^{\frac{n-2}{2}}}{|\tau-s|^{\frac12}}
			\int \limits_{\R^{2d}}
			{\bm P}^{\frac{\sigma+\delta}{\sqrt[d]{1-\sum_{j=1}^{n+1} \varepsilon_j}}} (s,z; \tau,\eta) 		
			{\bm P}^{\frac{\sigma+\delta}{\sqrt[d]{1-\sum_{j=1}^n \varepsilon_j}}} (\tau,\eta; t,y) d \eta d\tau\right.\\
			&\quad \quad +\left.\int \limits_s^t  \frac{(t-\tau)^{\frac{n-2}{2}}}{|\tau-s|^{\frac12}}
			\int \limits_{\R^{2d}}
			H_{\frac{\sigma + \delta}{\sqrt[d]{\sum_{j=1}^{n+1} \varepsilon_j} }} (\eta)
			{\bm P}^{\frac{\sigma+\delta}{\sqrt[d]{1-\sum_{j=1}^{n+1} \varepsilon_j}}} (s,z; \tau,\eta) 		
			{\bm P}^{\frac{\sigma+\delta}{\sqrt[d]{1-\sum_{j=1}^n \varepsilon_j}}} (\tau,\eta; t,y) d \eta d\tau\right).
		\end{align*}
		Now, we observe that for every $y \in \R^{2d}$ and $t >0$, it holds
		\begin{align}\label{eq:acca}
			H_{\sigma,\beta}(y;t) \le C_\beta\left((1+t^{\frac{\beta}{3}})\Norm{y}{B}^\beta+t^{\frac{\beta}{3}}+\left((\beta\sigma^d+\beta^{\frac{1}{3}}\sigma^{\frac{d}{3}})t\right)^{\frac{\beta}{2}}\right).
		\end{align}
		
		Hence, defining $J_\beta$ as in \eqref{eq:jay} and noticing it is an increasing function of $\sigma$, we get
		\begin{align}
			|\varphi_{n+1} (s,z;t,y)| 
			&\le  
			K^{n-1} \frac{\pi^{\frac{n}{2}}}{\Gamma(n/2)} \frac{d^{n+1}C_T^{n+1}(F)C^{n+1}}{(1-S)^{\frac{n+1}{2}}}    \left(\prod_{j=1}^n  \left(1+H_{\frac{\sigma+\delta}{\sqrt[d]{\varepsilon_j}},\beta}(y;T)\right)\right)  \\
			&\qquad \qquad 
			\left(\left( 1 + J_\beta \left( T,\frac{\sigma + \delta}{\sqrt[d]{\varepsilon_1 }} \right)\right)I_1+C_\beta \left( 1 + T^{\beta/3} \right)I_2\right). \label{eq:itut}
		\end{align}
		
		To handle the first integral $I_1$ , recall that
		\begin{multline}\label{eq:prectosucc}
		\frac{{\bm P}^{\frac{\sigma+\delta}{\sqrt[d]{1-\sum_{j=1}^n \varepsilon_j}}} (\tau,\eta; t,y)}{{\bm P}^{\frac{\sigma+\delta}{\sqrt[d]{1-\sum_{j=1}^{n+1} \varepsilon_j}}} (\tau,\eta; t,y)} = \sqrt{\frac{1-\sum_{j=1}^{n} \varepsilon_j}{1-\sum_{j=1}^{n+1} \varepsilon_j}}\\ \exp\left(-\frac{1}{2}(y-e^{(t-s)B}z)^{\sf t}\mathcal{C}_{\frac{\sigma+\delta}{\sqrt[d]{\varepsilon_{n+1}}}}^{-1}(t-s)(y-e^{(t-s)B}z)\right).
		\end{multline}
		By \eqref{eq:bellazio}, we know that
		\begin{equation*}
			{\bm P}^{\frac{\sigma+\delta}{\sqrt[d]{1-\sum_{j=1}^n \varepsilon_j}}} (\tau,\eta; t,y) \le \sqrt{\frac{1-\sum_{j=1}^{n} \varepsilon_j}{1-\sum_{j=1}^{n+1} \varepsilon_j}}{\bm P}^{\frac{\sigma+\delta}{\sqrt[d]{1-\sum_{j=1}^{n+1} \varepsilon_j}}} (\tau,\eta; t,y).
		\end{equation*}
		and then
		\begin{align}
			I_1 &:= \int \limits_s^t  \frac{(t-\tau)^{\frac{n-2}{2}}}{|\tau-s|^{\frac12}}
			\int \limits_{\R^{2d}}
			{\bm P}^{\frac{\sigma+\delta}{\sqrt[d]{1-\sum_{j=1}^{n+1} \varepsilon_j}}} (s,z; \tau,\eta) 		
			{\bm P}^{\frac{\sigma+\delta}{\sqrt[d]{1-\sum_{j=1}^n \varepsilon_j}}} (\tau,\eta; t,y) d \eta d\tau \\
			&\le \sqrt{\frac{1-\sum_{j=1}^{n} \varepsilon_j}{1-\sum_{j=1}^{n+1} \varepsilon_j}}{\bm P}^{\frac{\sigma+\delta}{\sqrt[d]{1-\sum_{j=1}^{n+1} \varepsilon_j}}} (s,z; t,y)\int \limits_s^t  \frac{(t-\tau)^{\frac{n-2}{2}}}{|\tau-s|^{\frac12}} d\tau \notag\\
			&\le \sqrt{\pi}\sqrt{1+\frac{\varepsilon_1}{1-S}} \frac{\Gamma\left(\frac{n}{2}\right)}{\Gamma\left(\frac{n+1}{2}\right)} (t-s)^{\frac{n-1}{2}} \label{eq:iuno}
		\end{align}
		where we applied the Chapman-Kolmogorov formula and we estimated as in \cite[Lemma 1.11]{AMP23}.
		Now, in order to control $I_2$ we observe that, by \eqref{eq:prectosucc} and Lemma~\ref{lem:genest}, we have
		\begin{equation*}
			\Norm{\eta}{B}^\beta {\bm P}^{\frac{\sigma+\delta}{\sqrt[d]{1-\sum_{j=1}^n \varepsilon_j}}} (\tau,\eta; t,y) \le  \sqrt{1+\frac{\varepsilon_{1}}{1-S}} H_{\frac{\sigma+\delta}{\sqrt[d]{\varepsilon_{n+1}}}, \beta}(y;T) {\bm P}^{\frac{\sigma+\delta}{\sqrt[d]{1-\sum_{j=1}^{n+1} \varepsilon_j}}} (\tau,\eta; t,y).
		\end{equation*}
		and then 
		\begin{align}
			I_2& := \int \limits_s^t  \frac{(t-\tau)^{\frac{n-2}{2}}}{|\tau-s|^{\frac12}}
			\int \limits_{\R^{2d}}
			\Norm{\eta}{B}^\beta
			{\bm P}^{\frac{\sigma+\delta}{\sqrt[d]{1-\sum_{j=1}^{n+1} \varepsilon_j}}} (s,z; \tau,\eta) 		
			{\bm P}^{\frac{\sigma+\delta}{\sqrt[d]{1-\sum_{j=1}^n \varepsilon_j}}} (\tau,\eta; t,y) d \eta d\tau  \\
			&\le \sqrt{1+\frac{\varepsilon_{1}}{1-S}} H_{\frac{\sigma+\delta}{\sqrt[d]{\varepsilon_{n+1}}}, \beta}(y;T){\bm P}^{\frac{\sigma+\delta}{\sqrt[d]{1-\sum_{j=1}^{n+1} \varepsilon_j}}} (s,z; t,y)\int \limits_s^t  \frac{(t-\tau)^{\frac{n-2}{2}}}{|\tau-s|^{\frac12}} d\tau \notag\\
			&\le \sqrt{\pi}\sqrt{1+\frac{\varepsilon_{1}}{1-S}} H_{\frac{\sigma+\delta}{\sqrt[d]{\varepsilon_{n+1}}}, \beta}(y;T) \frac{\Gamma\left(\frac{n}{2}\right)}{\Gamma\left(\frac{n+1}{2}\right)} (t-s)^{\frac{n-1}{2}}. \label{eq:idue}
		\end{align}
		
		Combining \eqref{eq:itut}, \eqref{eq:iuno} and \eqref{eq:idue} we finally obtain
		\begin{align*}
			|\varphi_{n+1} (s,z;t,y)| 
			&\le K^{n-1} \frac{\pi^{\frac{n+1}{2}}}{\Gamma\left(\frac{n+1}{2}\right)}\sqrt{1+\frac{\varepsilon_{1}}{1-S}} \frac{d^{n+1}C_T^{n+1}(F)C^{n+1}}{(1-S)^{\frac{n+1}{2}}}    \left(\prod_{j=1}^n  \left(1+H_{\frac{\sigma+\delta}{\sqrt[d]{\varepsilon_j}},\beta}(y;T)\right)\right)  \\
			&\qquad \left(\left( 1 + J_\beta \left( T,\frac{\sigma + \delta}{\sqrt[d]{\varepsilon_1 }} \right)\right)+C_\beta \left( 1 + T^{\beta/3} \right)H_{\frac{\sigma+\delta}{\sqrt[d]{\varepsilon_{n+1}}}, \beta}(y;T)\right)(t-s)^{\frac{n-1}{2}}\\
			&\le K^{n} \frac{\pi^{\frac{n+1}{2}} (t-s)^{\frac{n-1}{2}}}{\Gamma\left(\frac{n+1}{2}\right)} \frac{d^{n+1}C_T^{n+1}(F)C^{n+1}}{(1-S)^{\frac{n+1}{2}}}    \prod_{j=1}^{n+1}  \left(1+H_{\frac{\sigma+\delta}{\sqrt[d]{\varepsilon_j}},\beta}(y;T)\right).
		\end{align*}
		
	\end{proof}

	It remains to show that the Neumann series \eqref{series-par} is convergent.
	
	\begin{lem} \label{lemma:unif-conv}
		Set $\varepsilon_n=\varepsilon n^{\eta-\frac{1}{\beta}}$ with $0 < \eta < \frac{1}{\beta} - 1$. Then the Neumann series given by \eqref{series-par} is locally uniformly convergent in $D_T$.
	\end{lem}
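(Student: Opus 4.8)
The plan is to insert the pointwise bound \eqref{eq:induction} of Lemma~\ref{lemma:eq-induction} into a Weierstrass $M$-test on compact subsets of $D_T$, after first checking that the prescribed sequence is admissible for that lemma. Since $\eta<\frac1\beta-1$ forces $\eta-\frac1\beta<-1$, the series $\sum_{n\ge1}n^{\eta-1/\beta}$ converges; hence $\varepsilon_n=\varepsilon\,n^{\eta-1/\beta}$ is a nonincreasing nonnegative sequence and, choosing the free parameter $\varepsilon>0$ small enough, $S:=\sum_{n\ge1}\varepsilon_n=\varepsilon\sum_{n\ge1}n^{\eta-1/\beta}<1$. Thus Lemma~\ref{lemma:eq-induction} applies with this sequence, and \eqref{eq:induction} holds for every $n\ge1$ and every $(s,z;t,y)\in D_T$.

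Fix a compact set $\mathfrak K\subset D_T$; on $\mathfrak K$ one has $c:=\inf_{\mathfrak K}(t-s)>0$, $\sup_{\mathfrak K}t<T$, and $z,y$ ranging in a bounded set. I would first simplify the right-hand side of \eqref{eq:induction} on $\mathfrak K$. Arguing as for \eqref{eq:prectosucc} and using $\sum_{j=1}^n\varepsilon_j\le S<1$, we get the $n$-uniform bound ${\bm P}^{\frac{\sigma+\delta}{\sqrt[d]{1-\sum_{j=1}^n\varepsilon_j}}}(s,z;t,y)\le(1-S)^{-1/2}\,{\bm P}^{\frac{\sigma+\delta}{\sqrt[d]{1-S}}}(s,z;t,y)\le C_{\mathfrak K}$, where the last inequality uses that the Gaussian prefactor depends only on $t-s\ge c$ and the exponential factor is $\le1$; moreover $(t-s)^{(n-2)/2}\le C_{\mathfrak K}\,\rho^n$ for a suitable $\rho>0$. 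The remaining factor $K^{n-1}\,d^n\,C_T(F)^n\,C^n\,(1-S)^{-n/2}$ is of the form $A^n$ for a constant $A$ depending only on $T,\sigma,\delta,\beta,\varepsilon$ and the constants of $(A_1)$–$(A_2)$.

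The crucial point is the estimate of the product $\prod_{j=1}^n\bigl(1+H_{\frac{\sigma+\delta}{\sqrt[d]{\varepsilon_j}},\beta}(y;T)\bigr)$. For $y$ in the bounded set and $\varepsilon_j\to0$, the dominant term inside $H_{\sigma',\beta}(y;T)$ is $\bigl((\beta(\sigma')^{d}+\beta^{1/3}(\sigma')^{d/3})T\bigr)^{1/2}$ with $\sigma'=\frac{\sigma+\delta}{\sqrt[d]{\varepsilon_j}}$, which is of order $\varepsilon_j^{-1/2}$; raising to the power $\beta$ and using $\varepsilon_j\le\varepsilon<1$, one obtains $1+H_{\frac{\sigma+\delta}{\sqrt[d]{\varepsilon_j}},\beta}(y;T)\le C_{\mathfrak K}\,\varepsilon_j^{-\beta/2}=C_{\mathfrak K}\,\varepsilon^{-\beta/2}\,j^{(1-\beta\eta)/2}$, uniformly for $y$ in the bounded set, where $(1-\beta\eta)/2\in(0,\tfrac12)$ because $0<\beta\eta<1$. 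Multiplying over $j=1,\dots,n$ gives $\prod_{j=1}^n\bigl(1+H_{\frac{\sigma+\delta}{\sqrt[d]{\varepsilon_j}},\beta}(y;T)\bigr)\le B^n\,(n!)^{(1-\beta\eta)/2}$ for a suitable constant $B$. Collecting everything, the $n$-th term of \eqref{series-par} is bounded on $\mathfrak K$, uniformly in $(s,z;t,y)$, by $C_{\mathfrak K}\,M^n\,\dfrac{(n!)^{(1-\beta\eta)/2}}{\Gamma(n/2)}$ for some $M=M(\mathfrak K)>0$.

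It then remains to check that $\sum_{n\ge1}M^n\,(n!)^{(1-\beta\eta)/2}/\Gamma(n/2)<\infty$, and this is exactly where the hypothesis $\eta>0$ enters. Using $\Gamma((n+1)/2)/\Gamma(n/2)\sim\sqrt{n/2}$, the ratio of consecutive terms is $M\,(n+1)^{(1-\beta\eta)/2}\,\Gamma(n/2)/\Gamma((n+1)/2)\sim M'\,n^{-\beta\eta/2}\to0$, so the series converges by the ratio test; equivalently, by Stirling, $\log\bigl(\Gamma(n/2)/(n!)^{(1-\beta\eta)/2}\bigr)\sim\frac{\beta\eta}{2}\,n\log n\to+\infty$, which beats every geometric factor. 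By the Weierstrass $M$-test the Neumann series \eqref{series-par} converges uniformly on $\mathfrak K$, and since $\mathfrak K$ was an arbitrary compact subset of $D_T$ it converges locally uniformly in $D_T$. The two constraints on $\eta$ play complementary roles: $\eta<\frac1\beta-1$ guarantees $S<1$ (keeping the Gaussian parameter bounded and making Lemma~\ref{lemma:eq-induction} applicable), while $\eta>0$ produces the strict gain $(1-\beta\eta)/2<\frac12$ needed for the factorial growth of the $H$-product to be absorbed by $\Gamma(n/2)^{-1}$. The main obstacle is precisely controlling this product: the factors $1+H_{\frac{\sigma+\delta}{\sqrt[d]{\varepsilon_j}},\beta}(y;T)$ blow up as $j\to\infty$ because $\varepsilon_j\to0$, and one must quantify their growth sharply enough to see it dominated by the gain coming from the iterated time integrals.
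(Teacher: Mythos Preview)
Your proposal is correct and follows the same overall strategy as the paper: plug the bound \eqref{eq:induction} from Lemma~\ref{lemma:eq-induction} into a Weierstrass $M$-test on a compact subset of $D_T$, control the Gaussian factor and $(t-s)^{(n-2)/2}$ uniformly there, estimate the growth of the $H$-product, and conclude via Stirling that the resulting majorant series converges because $\beta\eta>0$.

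The one genuine difference is in how the product $\prod_{j=1}^n\bigl(1+H_{\frac{\sigma+\delta}{\sqrt[d]{\varepsilon_j}},\beta}(y;T)\bigr)$ is handled. The paper first uses monotonicity of $H$ in $\sigma$ to replace every factor by the largest one, obtaining $(1+H_{\frac{\sigma+\delta}{\sqrt[d]{\varepsilon_n}},\beta}(y;T))^n$, then bounds this by a constant times $(1+n^{(1-\beta\eta)/2})^n$ and finishes with a convexity splitting into two sub-series plus Cauchy--Schwarz. You instead bound each factor separately by $C\,j^{(1-\beta\eta)/2}$ and multiply, arriving directly at $B^n(n!)^{(1-\beta\eta)/2}$, after which a single application of the ratio test (or Stirling) suffices. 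Both routes give the same growth order $\sim C^n\,n^{n(1-\beta\eta)/2}$ for the product, so neither is sharper; your bookkeeping is somewhat cleaner and avoids the $S_1$/$S_2$ splitting at the end.
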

	
	\begin{proof}
	Let us first observe that, since $H_{\sigma, \beta}(y;T)$ is an increasing function of $\sigma$, then by \eqref{eq:induction} we know that 	
	\begin{align}\label{eq:induction2}
		&|\varphi_n(s,z; t,y)|  \nonumber \\
		& \qquad \le \frac{(12)^{d/2}}{K(\sigma+\delta)^{d/2}(t-s)^{1+2d}\Gamma(n/2)}\left(\frac{KdC_T(F)C\sqrt{\pi(t-s)}}{\sqrt{1-S}}\left(1+H_{\frac{\sigma+\delta}{\sqrt[d]{\varepsilon_n}}}(y;T)\right)\right)^n.
	\end{align}
	Let us fix a compact set $E \subset D_T$. 
	Then there exists $M>0$ such that $0 < M^{-1} \le t-s \le T$ and $\max\{\Norm{z}{B},\Norm{y}{B}\}\le M$ for all $(s,z;t,y) \in E$. By \eqref{eq:acca} we know that
	\begin{equation*}
		H_{\frac{\sigma+\delta}{\sqrt[d]{\varepsilon_n}}}(y;T) \le C_\beta(1+T^{\beta/3})M^\beta+C_\beta T^{\beta/3}+J_{\beta}\left(T,\frac{\sigma+\delta}{\sqrt[d]{\varepsilon_n}}\right).
	\end{equation*}
	Setting
	\begin{equation*}
		M_1=\frac{K d C_T(F)C\sqrt{\pi T}(1+C_\beta(T^{\beta/3}+(1+T^{\beta/3})M^\beta))}{\sqrt{1-S}}, \ M_2=\frac{12^{d/2}M^{1+2d}}{K(\sigma+\delta)^{d/2}}
	\end{equation*}
	we get
	\begin{equation}\label{eq:induction3}
		|\varphi_n(s,z; t,y)| \le \frac{M_2M_1^n}{\Gamma(n/2)}\left(1+J_{\beta}\left(T,\frac{\sigma+\delta}{\sqrt[d]{\varepsilon_n}}\right)\right)^n.
	\end{equation}
	Recalling the asymptotics of the Euler Gamma function we know that there exists a constant $M_3 > 0$ such that $\Gamma (n/2) > M_3 \sqrt{n!} n^{-1/4} 2^{-n/2}$ and then
	\begin{equation}\label{eq:induction4}
		|\varphi_n(s,z; t,y)| \le \frac{M_2 M_1^n n^{1/4} 2^{n/2}}{M_3\sqrt{n!}}\left(1+J_{\beta}\left(T,\frac{\sigma+\delta}{\sqrt[d]{\varepsilon_n}}\right)\right)^n.
	\end{equation}
	Furthermore, for a given $M_4 >0$, by definition of $\varepsilon_n$ and \eqref{eq:jay}
	\begin{equation*}
		J_{\beta}\left(T,\frac{\sigma+\delta}{\sqrt[d]{\varepsilon_n}}\right)=C_\beta\left(T^{\beta/3}+\left(\left(\beta\frac{(\sigma+\delta)^d}{\varepsilon n^{\eta-\frac{1}{\beta}}}+\left(\beta\frac{(\sigma+\delta)^d}{\varepsilon n^{\eta-\frac{1}{\beta}}} \right)^{1/3}\right)T\right)^{\beta/2}\right) \le M_4\left(1+n^{-\frac{\eta\beta}{2}+\frac{1}{2}}\right)
	\end{equation*}
	and then, setting $M_5=M_1M_4$,
	\begin{equation}\label{eq:induction5}
		|\varphi_n(s,z; t,y)| \le \frac{M_2 M_5^n n^{1/4} 2^{n/2}}{M_3\sqrt{n!}}\left(1+n^{-\frac{\eta\beta}{2}+\frac{1}{2}}\right)^n.
	\end{equation}
	It remains to show that the series of the elements on the right-hand side of the estimate above converges. Indeed, we have
	\begin{align}\label{eq:induction4}
		&\sum_{n \geq 1}\frac{M_5^n n^{1/4} 2^{n/2}}{\sqrt{n!}}\left(1+n^{-\frac{\eta\beta}{2}+\frac{1}{2}}\right)^n \\ 
		&\le \sum_{n \geq 1} \frac{M_5^n n^{1/4} 2^{(n+2)/2}}{\sqrt{n!}}\left(1+n^{-\frac{n \eta\beta}{2}+\frac{n}{2}}\right) \\
		& \qquad = \sum_{n \geq 1} \frac{M_5^n n^{1/4} 2^{(n+2)/2}}{\sqrt{n!}}
		+ \sum_{n \geq 1} \frac{M_5^n n^{1/4} 2^{(n+2)/2}}{\sqrt{n!}}n^{-\frac{n \eta\beta}{2}+\frac{n}{2}} = S_1 + S_2 
	\end{align} 
	Let us begin with $S_1$. By applying the Cauchy-Schwartz inequality we get
	\begin{align*}
		S_1 = 2 \sum_{n \geq 1} \frac{(2M_5)^n n^{1/4}}{2^{n/2}\sqrt{n!}} 
		\leq 2 \sqrt{ \sum_{n \geq 1} \frac{(4M_5^2)^n}{n!} } \sqrt{ \sum_{n \geq 1}  \frac{ n^{1/2}}{2^{n}}  } < \infty,
	\end{align*}
	where the right-most product is convergent thanks to the quotient criteria.
	By analogously proceeding for $S_2$
	\begin{align*}
		S_2 &= 2 \sum_{n \geq 1} \frac{(2M_5)^n n^{1/4} }{2^{n/2} \sqrt{n!}}n^{-\frac{n \eta\beta}{2}+\frac{n}{2}} 
		\leq  2 \sqrt{ \sum_{n \geq 1} \frac{(4M_5^2)^n n^{1/2}e^n}{n! e^n}n^{-n \eta\beta+n}  }
			\sqrt{ \sum_{n \geq 1} \frac{ 1}{2^{n}}   }\\
		&\leq 2 C \sqrt{ \sum_{n \geq 1} (4M_5^2 e)^n n^{-n \eta\beta}  }
			\sqrt{ \sum_{n \geq 1} \frac{ 1}{2^{n}}   } \leq 2 C \sqrt{ \sum_{n \geq 1} (4M_5^2 e)^n n^{-n \eta\beta}  } < \infty,
	\end{align*}
	where it is possible to prove that the last product is convergent by Stirling's factorial asymptotics combined with the root criterion for series.
	\end{proof}
	\begin{rem}
		Observe that in Lemma~\ref{lemma:unif-conv} we only need $\beta<1$. However, if we consider $a_{i,j}$ as in Remark~\ref{rem:nonconst}, the estimate \eqref{eq:induction} becomes
		\begin{equation*}
			|\varphi_n(s,z;t,y)| \le \overline{K}^{n-1}\frac{\left(\Gamma\left(\frac{\alpha}{2}\right)\right)^{\frac{n}{2}}(t-s)^{\frac{\alpha n-2}{2}}}{\Gamma\left(\frac{n\alpha}{2}\right)}{\bm P}^{\frac{\sigma+\delta}{\sqrt[d]{1-\sum_{j=1}^n \varepsilon_j}}} (s,z; t,y)\prod_{j=1}^n  \left(1+H_{\frac{\sigma+\delta}{\sqrt[d]{\varepsilon_j}},\beta}(y;T)\right),
		\end{equation*}
		for some constant $\overline{K}>0$. Using this formula in Lemma~\ref{lemma:unif-conv}, the asymptotics of the Gamma function imply the convergence of the Neumann series under $\beta<\alpha$. 
	\end{rem}

	\subsubsection{\emph{(1)} Proof of Gaussian estimates and H\"older continuity for $\varphi$}
	\label{sub:gauss-par}
	The proof of these results follows as in \cite[Lemma 6.1]{francesco2005class} with
	suitable modifications to account for our choice of the parametrix. In particular, in \cite[Lemma 4.3]{francesco2005class} one needs to apply 
	Lemma \ref{lem:genest}
	iteratively in the same spirit of the proof of Lemma \ref{lemma:eq-induction}. Here, the condition $\beta<\alpha$ is needed even in the constant $\sigma$ case.
		
	\subsubsection{\emph{(2)} $p$ is a fundamental solution of $\cL$ and satisfies the Gaussian estimates \eqref{gamstimL1} } 
	\label{sub:fun-sol}
	As far as we are concerned with point (2) of the proof, we need to show that our candidate function $p$ introduced in \eqref{parametrix1} is
	indeed a fundamental solution in the sense of Definition \ref{def-fs2}. This can be carried out as in \cite{lucertinigood}.

\subsection{Useful applications of Theorem \ref{existence-fs2}}
Next, we also extend the existence and uniqueness result for the Cauchy problem proved in \cite[Theorem 4.1]{lucertinigood} to our setting.
\begin{thm}\label{thm:exstuniqueadj}
	Let Assumptions $(A_0)$, $(A_1)$ and $(A_2)$ hold and fix $T>0$. Assume further that:
	\begin{itemize}
		\item[$(i)$] there exists $\gamma_{\Psi} \in [0,1)$ and $\beta_\Psi \in (0,\alpha)$ such that
		\begin{equation*}
			\Norm{\Psi}{L^\infty_{\gamma}((0,T);C_{\rm B}( \R^{2d}))}:=\esssup_{t \in (0,T)}(T-t)^{\gamma}\Norm{\Psi(t)}{C_{\rm B}^{\beta_\Psi}}<\infty.
		\end{equation*}
		\item[$(ii)$] $g \in C_{\rm B}^{\beta_g}(\R^{2d})$ for some $\beta_g \in [0,2+\beta_\Psi]$ (where $C_{\rm B}^0(\R^{2d})=C_{\rm b}(\R^{2d})$).
	\end{itemize}
	Then the function
	\begin{equation}\label{eq:soladj}
		u(t,z)=\int_{\R^{2d}}p(t,z;T,y)g(y)dy-\int_t^T\int_{\R^{2d}}p(t,z;\tau,y)\Psi(\tau,y)\, dy \, d\tau
	\end{equation}
	is the unique bounded strong Lie solution of \eqref{eq:adjeq-app}.
	Furthermore, for any $t \in (0,T)$, $u \in C^{2+\beta_\Psi}_{{\rm B},Y}((0,t)\times \R^{2d})$ and there exists a constant $C_u>0$ independent of $t \in (0,T)$ such that
	\begin{equation}\label{eq:Holderbound}
		\Norm{u}{C^{2+\beta_\Psi}_{{\rm B},Y}((0,t)\times \R^{2d})} \le C_u\left((T-t)^{-\frac{2+\beta_\Psi-\beta_g}{2}}\Norm{g}{C_{\rm B}^{\beta_g}(\R^{2d})}+(T-t)^{-\gamma}\Norm{\Psi}{L^\infty_{\gamma}((0,T);C_{\rm B}^{\beta_\Psi}(\R^{2d}))}\right).
\end{equation}
\end{thm}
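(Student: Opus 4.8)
The plan is to follow the architecture of \cite[Theorem~4.1]{lucertinigood}, replacing the fundamental solution and the Gaussian bounds available there under boundedness of $F$ with the ones furnished by Theorem~\ref{existence-fs2} in the present unbounded setting. Accordingly, the candidate solution is the function $u$ written in \eqref{eq:soladj}, and the argument splits into three parts: (i) $u$ is a bounded strong Lie solution of \eqref{eq:adjeq-app}; (ii) uniqueness within the class of bounded strong Lie solutions; (iii) the quantitative estimate \eqref{eq:Holderbound}.

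For (i), boundedness of $u$ is immediate from the mass identity $\int_{\R^{2d}}p(t,z;\tau,y)\,dy=1$ of \eqref{eq:int1p}: the first term in \eqref{eq:soladj} is controlled by $\Norm{g}{L^\infty}$, while the second is controlled by $\left(\int_t^T(T-\tau)^{-\gamma_\Psi}\,d\tau\right)\Norm{\Psi}{L^\infty_{\gamma_\Psi}}$, which is finite precisely because $\gamma_\Psi<1$. The $v$-regularity demanded by Definition~\ref{def-sol-3}(i) is obtained by differentiating \eqref{eq:soladj} under the integral sign and invoking the derivative bounds \eqref{gamstim} and the second-order bound of Theorem~\ref{existence-fs2}; when $\beta_g<2+\beta_\Psi$ the resulting $\tau=T$ singularity is tamed by the classical device of subtracting from $g$ (resp. from $\Psi(\tau,\cdot)$) its value at the center $e^{(T-t)B}z$ of the relevant Gaussian — admissible since $\partial_v^k\!\int p(t,z;\cdot,y)\,dy=0$ — and using $g\in C^{\beta_g}_{\rm B}$, $\Psi(\tau)\in C^{\beta_\Psi}_{\rm B}$ to gain the compensating factors $\Norm{y-e^{(T-t)B}z}{\rm B}^{\beta_g}$, $\Norm{y-e^{(\tau-t)B}z}{\rm B}^{\beta_\Psi}$ against ${\bm P}^{\sigma+\varepsilon}$. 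Finally, the integral identity of Definition~\ref{def-sol-3}(ii) follows, as in \cite{lucertinigood}, by differentiating the Duhamel representation and using that $p$ is a fundamental solution of $\cL$ in the sense of Definition~\ref{def-fs2} together with the narrow convergence $p(t,z;\tau,\cdot)\,dy\to\delta_z$ as $\tau\downarrow t$; here one should remark that, along a fixed integral curve $\tau\mapsto e^{(\tau-t)B}z$ with $\tau$ confined to a bounded interval, $F(\tau,e^{(\tau-t)B}z)$ remains bounded, so $\cA_\Psi u(\tau,e^{(\tau-t)B}z)$ is integrable in $\tau$ in spite of the global unboundedness of $F$.

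For (ii), let $w=u_1-u_2$ be the difference of two bounded strong Lie solutions; then $w$ is a bounded strong Lie solution of \eqref{eq:adjeq-app} with $\Psi\equiv0$ and $g\equiv0$. The plan is to establish, for every $t<s<T$, the reproduction formula
\begin{equation*}
	w(t,z)=\int_{\R^{2d}}p(t,z;s,y)\,w(s,y)\,dy,
\end{equation*}
by showing that $s'\mapsto\int_{\R^{2d}}p(t,z;s',y)\,w(s',y)\,dy$ is constant on $(t,T)$: after integrating by parts in $v$ (carried out rigorously through the Lie/absolutely-continuous formulation, as in \cite{lucertinigood}), its derivative vanishes because $p(t,z;\cdot,\cdot)$ solves the forward Fokker--Planck equation while $w$ solves the backward Kolmogorov equation \eqref{eq:adjeq-app}, the two being formally adjoint; the boundary contributions at spatial infinity are annihilated by the Gaussian decay \eqref{gamstimL1} and \eqref{gamstim} of $p$ against the bounded $w$, and the polynomial growth of $F$ arising in the cross terms is absorbed into a marginally larger Gaussian by Lemma~\ref{lem:genest}. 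Letting $s\uparrow T$ and using $w(T,\cdot)\equiv0$, continuity of $w$ up to $t=T$ and dominated convergence (with the Gaussian bound \eqref{gamstimL1}, whose constant is locally uniform in $s$) yield $w(t,z)=0$ for all $(t,z)$, hence uniqueness.

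For (iii), one differentiates \eqref{eq:soladj} and estimates the resulting integrals through the Gaussian and H\"older bounds for $p$ provided by Theorem~\ref{existence-fs2} and the H\"older bounds for $\varphi$ obtained in Subsection~\ref{sub:gauss-par}; this reduces matters to the standard anisotropic potential (Schauder-type) estimates for convolutions with ${\bm P}^{\sigma+\varepsilon}$ in the Lie geometry of Section~\ref{sub:notation}, exactly as in \cite{lucertinigood,francesco2005class}. The negative powers $(T-t)^{-\frac{2+\beta_\Psi-\beta_g}{2}}$ and $(T-t)^{-\gamma_\Psi}$ in \eqref{eq:Holderbound} are then forced, respectively, by the $\Phi_r$-homogeneity of the kernel acting on the $g$-term and by the weighted integrability hypothesis on $\Psi$. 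I expect the only genuinely new difficulty relative to \cite{lucertinigood} to be the bookkeeping of the unbounded drift: every time a factor $F$ multiplies a Gaussian kernel one must trade the growth $1+\Norm{\cdot}{\rm B}^{\beta}$ for a slightly worse Gaussian via Lemma~\ref{lem:genest} (and, in the iterated estimates, its repeated use as in the proof of Lemma~\ref{lemma:eq-induction}), and it is precisely the strict inequalities $\beta<\alpha$ and $\beta_\Psi<\alpha$ that keep the ensuing series and potential integrals finite.
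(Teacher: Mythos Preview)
Your proposal is correct and follows exactly the approach the paper intends: the paper does not spell out a proof of Theorem~\ref{thm:exstuniqueadj} but simply presents it as the extension of \cite[Theorem~4.1]{lucertinigood} obtained by substituting the fundamental solution and Gaussian bounds of Theorem~\ref{existence-fs2} for the ones used there under bounded $F$. Your elaboration of parts (i)--(iii), including the use of Lemma~\ref{lem:genest} to absorb the polynomial growth of $F$ into the Gaussian, is precisely the bookkeeping that makes this transplant work.
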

\begin{rem}
	The formula \eqref{eq:soladj} perfectly explains why $p$ is called \textit{fundamental solution} of $\cL^\ast$, as it is the \textit{fundamental} building block of the bounded solutions of \eqref{eq:adjeq-app}. For analogy, we could also refer to $p$ as the \textit{heat kernel} of $\cL^\ast$.
	
	Furthermore, \eqref{eq:soladj} can be also used under the conditions:
	\begin{itemize}
		\item[$(iii)$] There exist $\gamma \in [0,2]$ and $C>0$ such that
		\begin{equation*}
			|\Psi(t,z)|+|g(z)| \le Ce^{C|z|^\gamma}, \ (t,z) \in (0,T)\times \R^{2d}
		\end{equation*}
		\item[$(iv)$] There exists $\beta \in (0,1]$ such that for any compact $K \subset \R^{2d}$
		\begin{equation*}
			\sup_{\substack{t \in (0,T), \, z_1,z_2 \in K \\ z_1 \not = z_2}}\frac{|\Psi(t,z_1)-\Psi(t,z_2)|}{\Norm{z_1-z_2}{\rm B}^{\beta}}<\infty. 
		\end{equation*}
	\end{itemize}
	Indeed, if hypotheses $(iii)$ and $(iv)$ hold in place of $(i)$ and $(ii)$ in Theorem \ref{thm:exstuniqueadj}, then there exists $T_0$ such that for any $T<T_0$ the function $u$ in \eqref{eq:soladj} is the unique strong Lie solution of \eqref{eq:adjeq-app} that satisfies
	\begin{equation*}
		|u(t,z)|\le C_ue^{C_u|z|^2}, \ (t,z) \in (0,T)\times \R^{2d}.
	\end{equation*}
	The case $\gamma=2$ has been stated in \cite[Theorem~4.1 and Remark~4.1]{lucertinigood}, and it is obtained by means of standard methods in the theory of parabolic partial differential equations (as for instance done in \cite{francesco2005class}, see also the book \cite{pascucci2011pde}). Analogously, if $\gamma<2$, then with the same arguments one obtains $T_0=\infty$. In such a way, one has a Tychonoff-like uniqueness condition.
\end{rem}

We will consider, in particular, the case $g \equiv 0$ and $\Psi(t,z)=\psi(z)$ for any $t>0$ and $z \in \R^{2d}$, where $\psi \in C^{\infty}_c(\R^{2d})$. 
It is not difficult to prove that hypothesis $(i)$ of Theorem~\ref{thm:exstuniqueadj} is satisfied by our choice of $\Psi$, with $\gamma=0$. 
Hypothesis $(ii)$ is clearly satisfied for $\beta_g=2+\beta_\Psi$. Hence, we can state the following corollary, which is a direct consequence of Theorem \ref{thm:exstuniqueadj}.
\begin{cor} \label{corollary-app}
	Let Assumption $(A_0), (A_1)$ and $(A_2)$ hold true. Let also $\psi \in C^\infty_{\rm c}(\R^{2d})$. Then for any $T>0$ the backward Kolmogorov equation
	\begin{equation}\label{eq:adjeqcinf}
		\begin{cases}
			\cL u(t,z)=\psi(z) &(t,z) \in [0,T) \times \R^{2d} \\
			u(T,z)=0 & z \in \R^{2d}
		\end{cases}
	\end{equation}
	admits a unique bounded strong Lie solution
	\begin{equation}\label{eq:adjsolcinf}
		u(t,z)=-\int_t^T\int_{\R^{2d}}p(t,z;\tau,y)\psi(y)\, dy \, d\tau.
	\end{equation}
	Furthermore, for any $\beta \in (0,\alpha(T))$ there exists a constant $C_u>0$
	\begin{equation*}
		\Norm{u}{C_{{\rm B},Y}^{2+\beta}((0,T)\times \R^{2d})} \le C_uC_\beta(\psi),
	\end{equation*}
	where
	\begin{equation}\label{constantpsi}
		C_{\beta}(\psi):=\max\{2^{1-\beta}\Norm{\psi}{L^\infty(\R^{2d})}\Norm{\nabla_z \psi}{L^\infty(\R^{2d})}^{\beta},\Norm{\nabla_z\psi}{L^\infty(\R^{2d})}\}.	
	\end{equation}
	As a consequence
	\begin{equation}\label{eq:Linfbound}
		\Norm{u}{L^\infty((0,T)\times \R^{2d})}+\Norm{\nabla_v u}{L^\infty((0,T)\times \R^{2d})} \le C_uC_\beta(\psi).
	\end{equation}
\end{cor}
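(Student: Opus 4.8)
The plan is to derive Corollary \ref{corollary-app} directly from Theorem \ref{thm:exstuniqueadj} by verifying its hypotheses for the specific choice $g \equiv 0$ and $\Psi(t,z) = \psi(z)$ with $\psi \in C^\infty_{\rm c}(\R^{2d})$, and then tracking the constants in the resulting H\"older estimate \eqref{eq:Holderbound}. First I would check hypothesis $(i)$: since $\psi$ is smooth and compactly supported, for any $\beta_\Psi \in (0,\alpha)$ the norm $\Norm{\psi}{C_{\rm B}^{\beta_\Psi}}$ is finite and time-independent, so taking $\gamma = 0$ gives $\Norm{\Psi}{L^\infty_0((0,T);C_{\rm B}^{\beta_\Psi})} = \Norm{\psi}{C_{\rm B}^{\beta_\Psi}} < \infty$. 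Next, hypothesis $(ii)$ holds with $\beta_g = 2 + \beta_\Psi$ because $g \equiv 0$ lies trivially in every H\"older class. Plugging $g \equiv 0$ into \eqref{eq:soladj} immediately yields the representation \eqref{eq:adjsolcinf}, and Theorem \ref{thm:exstuniqueadj} guarantees that this is the unique bounded strong Lie solution of \eqref{eq:adjeqcinf}.

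The second step is to specialize the H\"older bound \eqref{eq:Holderbound}. With $g \equiv 0$ the first term on the right-hand side vanishes, and with $\gamma = 0$ the prefactor $(T-t)^{-\gamma}$ becomes $1$, leaving $\Norm{u}{C^{2+\beta_\Psi}_{{\rm B},Y}((0,t)\times \R^{2d})} \le C_u \Norm{\psi}{C_{\rm B}^{\beta_\Psi}}$ uniformly in $t \in (0,T)$; letting $t \uparrow T$ gives the bound on $(0,T) \times \R^{2d}$. It remains to show that $\Norm{\psi}{C_{\rm B}^{\beta_\Psi}}$ can be controlled by $C_\beta(\psi)$ as defined in \eqref{constantpsi}. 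For this I would estimate the anisotropic H\"older seminorm of $\psi$ directly: for $z_1, z_2$ with $\Norm{z_1 - z_2}{\rm B} \le 1$ one has (by the mean value theorem along the Euclidean segment, combined with the fact that $\|z_1-z_2\|_{\rm B}^\beta \ge \|z_1-z_2\|_{\rm B}$ in this regime and that the Euclidean distance is dominated by a power of $\|\cdot\|_{\rm B}$) a bound of the form $|\psi(z_1)-\psi(z_2)| \le \Norm{\nabla_z \psi}{L^\infty}\Norm{z_1-z_2}{\rm B}^{\beta}$, while for $\Norm{z_1-z_2}{\rm B} > 1$ one simply bounds $|\psi(z_1)-\psi(z_2)| \le 2\Norm{\psi}{L^\infty} \le 2\Norm{\psi}{L^\infty}\Norm{z_1-z_2}{\rm B}^{\beta}$; combining the two ranges and the sup-norm term gives a multiple of $C_\beta(\psi)$. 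Absorbing the resulting numerical factor into $C_u$ yields $\Norm{u}{C_{{\rm B},Y}^{2+\beta}((0,T)\times \R^{2d})} \le C_u C_\beta(\psi)$.

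Finally, the estimate \eqref{eq:Linfbound} follows because, by the very definition of the intrinsic H\"older space $C^{2+\beta}_{{\rm B},Y}$ for exponent in $(2,3]$, membership with that norm controls $\Norm{u}{L^\infty((0,T);C_{\rm B}^{2+\beta})}$, which in turn dominates both $\Norm{u}{L^\infty}$ and $\Norm{\nabla_v u}{L^\infty}$ (the latter via the embedding of $C_{\rm B}^{2+\beta}$ into functions whose $\nabla_v$ belongs to $C_{\rm B}^{1+\beta} \subset C_{\rm b}$); hence the sum on the left of \eqref{eq:Linfbound} is bounded by $\Norm{u}{C_{{\rm B},Y}^{2+\beta}((0,T)\times \R^{2d})} \le C_u C_\beta(\psi)$.

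I expect the only genuinely non-routine point to be the comparison between the natural H\"older norm $\Norm{\psi}{C_{\rm B}^{\beta_\Psi}}$ appearing in Theorem \ref{thm:exstuniqueadj} and the explicit interpolation-type constant $C_\beta(\psi)$ in \eqref{constantpsi}: one must split into the small- and large-increment regimes relative to the homogeneous norm $\Norm{\cdot}{\rm B}$ and use that on a bounded increment the anisotropic norm is comparable (up to a power) to the Euclidean one, so that the Lipschitz bound via $\nabla_z\psi$ applies. Everything else is a direct substitution of the special data into Theorem \ref{thm:exstuniqueadj} and a reading-off of the definitions of the intrinsic H\"older spaces.
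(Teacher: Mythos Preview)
Your proposal is correct and follows exactly the approach the paper takes: the paper states just before the corollary that one verifies hypothesis $(i)$ of Theorem~\ref{thm:exstuniqueadj} with $\gamma=0$ and hypothesis $(ii)$ with $\beta_g=2+\beta_\Psi$, and then records the corollary as a direct consequence without further argument. You actually supply more detail than the paper does, in particular the interpolation-type estimate bounding $\Norm{\psi}{C_{\rm B}^{\beta}}$ by $C_\beta(\psi)$ via the small/large increment split, which the paper leaves implicit.
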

\begin{rem}
	Actually, by \eqref{eq:adjsolcinf}, we know that
	\begin{equation*}
		\Norm{u}{L^\infty((0,T)\times \R^{2d})} \le T\Norm{\psi}{L^\infty(\R^{2d})}
	\end{equation*}
	where we used the fact that the fundamental solution is nonnegative (by \eqref{eq:loweGaus}) and \eqref{eq:int1p}. Furthermore, by \eqref{gamstimL1} and \eqref{eq:integ1Gam2}, we know that there exists a constant $C>0$ such that
	\begin{equation}\label{eq:L1bound}
		\Norm{u}{L^1((0,T)\times \R^{2d})} \le C\Norm{\psi}{L^1(\R^{2d})}.
	\end{equation}
\end{rem}

\medskip
\section{Proof of Theorem \ref{thm:existence}}
\label{proofthm}
We first prove uniqueness of the solutions and then we move to the existence statement. To do this, we need to better understand the relation between the solutions of \eqref{eq:adjeqcinf} and the ones of \eqref{probmu} in $[0,T]$. This is explained in the following lemma.
\begin{lem}\label{lem:quellocheeraunaclaim}
	Let the assumptions of Theorem \ref{thm:existence} hold. For any $\psi \in C^\infty_{\rm c}(\R^{2d})$, let $u:[0,T] \times \R^{2d} \to \R$ be the unique bounded strong Lie solution of \eqref{eq:adjeqcinf}, that exists by Corollary \ref{corollary-app}, and let $\mu$ be any solution of \eqref{probmu}. Then
	\begin{equation*}
		t \in [0,T] \mapsto \int_{\R^{2d}}u(t,z)d\mu_t(z)
	\end{equation*}
	is Lipschitz continuous and for any $t \in [0,T]$ it holds
	\begin{equation}\label{eq:repr11}
		\int_{\R^{2d}}u(t,z)d\mu_t(z)=-\int_t^T \int_{\R^{2d}} \psi(z)d\mu_s(z) ds.
	\end{equation}	
\end{lem}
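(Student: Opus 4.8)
The plan is to establish a time-dependent version of the weak formulation \eqref{eq:weaksol}, namely that for every $0\le t_1\le t_2\le T$
\begin{equation*}
\int_{\R^{2d}}u(t_2,z)\,d\mu_{t_2}(z)-\int_{\R^{2d}}u(t_1,z)\,d\mu_{t_1}(z)=\int_{t_1}^{t_2}\int_{\R^{2d}}\bigl(Yu+F\cdot\nabla_v u+\sigma\Delta_v u\bigr)(s,z)\,d\mu_s(z)\,ds,
\end{equation*}
and then to insert into it the equation solved by $u$. Indeed, by Corollary \ref{corollary-app} we have $u\in C^{2+\beta}_{{\rm B},Y}((0,T)\times\R^{2d})$ and $\cA_\psi u$ is continuous, so by Remark \ref{rem:pointwise2} the function $u$ solves $\cL u=Yu+F\cdot\nabla_v u+\sigma\Delta_v u=\psi$ pointwise on $(0,T)\times\R^{2d}$; choosing $t_1=t$, $t_2=T$ in the identity above and using $u(T,\cdot)\equiv 0$ yields $-\int_{\R^{2d}}u(t,z)\,d\mu_t(z)=\int_t^T\int_{\R^{2d}}\psi\,d\mu_s\,ds$, which is exactly \eqref{eq:repr11}. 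The Lipschitz continuity is then automatic, since $s\mapsto\int_{\R^{2d}}\psi\,d\mu_s$ is bounded by $\Norm{\psi}{L^\infty(\R^{2d})}$ ($\mu_s$ being probability measures), so the right-hand side of \eqref{eq:repr11} is Lipschitz in $t$; hence one really only needs to prove the displayed identity.

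To prove it I would regularize $u$ by mollifying in the $(t,x)$ variables only, $u^\eps:=u\ast_{(t,x)}\eta_\eps$ (extending $u$ across $t=0$ and $t=T$ by $u(0,\cdot)$ and $0$, which is harmless). The point is that convolution in $(t,x)$ commutes with the left-invariant operators $Y$, $\nabla_v$ and $\Delta_v$: the characteristic flow $e^{sB}$ translates $x$ by a multiple of $v$, so it is compatible with a convolution in $(t,x)$, and one checks directly that $Yu^\eps=(Yu)\ast_{(t,x)}\eta_\eps$, $\nabla_v u^\eps=(\nabla_v u)\ast_{(t,x)}\eta_\eps$, $\Delta_v u^\eps=(\Delta_v u)\ast_{(t,x)}\eta_\eps$. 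Consequently $u^\eps$ is smooth in $(t,x)$ and $C^2$ in $v$, belongs to ${\sf A}(\R^{2d})$ for each fixed $s$, has $\partial_s u^\eps$ bounded, and $u^\eps\to u$, $\nabla_v u^\eps\to\nabla_v u$, $\Delta_v u^\eps\to\Delta_v u$, $Yu^\eps\to Yu$ locally uniformly on $(0,T)\times\R^{2d}$ with $L^\infty$ bounds inherited from $\Norm{u}{C^{2+\beta}_{{\rm B},Y}}$ of Corollary \ref{corollary-app} (alternatively one may use the group convolution of Appendix \ref{app:A}). For fixed $\eps$ the displayed identity with $u^\eps$ in place of $u$ follows from a standard time-discretization: over a partition $t_1=s_0<\dots<s_N=t_2$ one writes $\int u^\eps(s_{j+1})\,d\mu_{s_{j+1}}-\int u^\eps(s_j)\,d\mu_{s_j}$ as $\int u^\eps(s_{j+1})\,d(\mu_{s_{j+1}}-\mu_{s_j})+\int\!\int_{s_j}^{s_{j+1}}\partial_s u^\eps(s,z)\,ds\,d\mu_{s_j}(z)$, applies \eqref{eq:weaksol} to the first summand (legitimate since $u^\eps(s_{j+1},\cdot)\in{\sf A}(\R^{2d})$), telescopes, and lets the mesh go to zero using the narrow continuity of $s\mapsto\mu_s$ and the joint continuity of $u^\eps,\partial_s u^\eps,\nabla_v u^\eps,\Delta_v u^\eps$.

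It then remains to let $\eps\to 0$. The left-hand side and the terms with $Yu^\eps$ and $\Delta_v u^\eps$ pass to the limit by dominated convergence, the integrands being uniformly bounded and locally uniformly convergent while $\{\mu_s\}_{s\in[0,T]}$ is uniformly tight (as $\bm{\mu}\in C([0,T];\W_1(\R^{2d}))$). The term $\int_{t_1}^{t_2}\int F(s,z)\cdot\nabla_v u^\eps(s,z)\,d\mu_s\,ds$ requires the growth assumption $(A_1)$: since $|F(s,z)|\le C(1+\Norm{z}{\rm B}^\beta)$ and $\beta<1$ implies $\Norm{z}{\rm B}^\beta\le C(1+|z|)$, and since $\bm{\mu}\in C([0,T];\W_1)$ gives $\sup_{s\in[0,T]}\int_{\R^{2d}}|z|\,d\mu_s<\infty$ with $\{|z|\,d\mu_s\}_{s\in[0,T]}$ uniformly integrable, the uniform bound on $\nabla_v u^\eps$ together with its local uniform convergence gives $F\cdot\nabla_v u^\eps\to F\cdot\nabla_v u$ in $L^1([t_1,t_2]\times\R^{2d};d\mu_s\,ds)$. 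This proves the displayed identity, hence the lemma. The main obstacle is exactly that $u$ is only anisotropically H\"older (of exponent $\tfrac{2+\beta}{3}<1$) in the $x$ variable, so it is not a licit test function for \eqref{eq:weaksol} in the naive sense; the regularization adapted to the transport term is what makes the duality computation rigorous, and the unboundedness of $F$ is what forces the moment/uniform-integrability bookkeeping in the limit.
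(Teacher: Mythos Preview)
Your overall strategy coincides with the paper's: regularize $u$ by a convolution that commutes with $Y$, $\nabla_v$ and $\Delta_v$, derive the identity for the regularized function via the weak formulation, and pass to the limit. The paper uses the group convolution of Appendix~\ref{app:A} and differentiates $t\mapsto\int u_\varepsilon\,d\mu_t$ directly, while you use Euclidean $(t,x)$-mollification and a time-discretization; these are harmless variations, and your verification that $Yu^\eps=(Yu)\ast_{(t,x)}\eta_\eps$ is correct.

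There is, however, a genuine gap in your $\varepsilon\to 0$ step. You assert that $Yu^\eps\to Yu$ ``locally uniformly'' and then invoke dominated convergence. But under $(A_0)$ the drift $F$ is only \emph{measurable} in $t$, and from the equation $Yu=\psi-\sigma\Delta_v u-F\cdot\nabla_v u$ one sees that $Yu$ is merely Carath\'eodory: it lies in $L^\infty((0,T);C^\beta_{\rm B}(\R^{2d}))$ with no continuity in $t$ (this is precisely the point of the regularity class $C^{2+\beta}_{{\rm B},Y}$, where only $Yf\in L^\infty_t C^\beta_z$ is asserted). Consequently $(Yu)^\eps$ need not converge locally uniformly, and pointwise a.e.\ convergence from Lebesgue differentiation is only a.e.\ in $(t,x)$, which is useless against the possibly singular measures $\mu_s$. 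The paper circumvents this by \emph{not} passing to the limit in $Yu^\eps$ directly: it rewrites the right-hand side, using $\cL u=\psi$, as
\[
\int\!\!\int\bigl(\rho_\eps\ast\psi\bigr)\,d\mu_s\,ds\;+\;\int\!\!\int\Bigl(F\cdot\nabla_v u_\eps-\rho_\eps\ast(F\cdot\nabla_v u)\Bigr)\,d\mu_s\,ds,
\]
and shows that the commutator term vanishes via Lemma~\ref{lem:L1Cara}, whose proof uses Scorza--Dragoni to upgrade the Carath\'eodory function to a continuous one off a small time set. With your $(t,x)$-mollification the same rewriting applies verbatim (your identity for $u^\eps$ reads $\int\!\int\psi^\eps\,d\mu_s\,ds+\int\!\int[F\cdot(\nabla_v u)^\eps-(F\cdot\nabla_v u)^\eps]\,d\mu_s\,ds$), and you still need an argument of Lemma~\ref{lem:L1Cara} type for the second integral; your moment/uniform-integrability bookkeeping alone does not address the lack of time-continuity of $F$.
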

\begin{proof}
	Let $u$ be the bounded strong Lie solution of \eqref{eq:adjeqcinf} and extend it by setting \linebreak $u(t,z)=0$ for any $t>T$ and $z \in \R^{2d}$. Fix $\varepsilon>0$ let $u_\varepsilon=\rho_\varepsilon \, \ast \, u$.  Notice that \linebreak $u_\varepsilon \in C^\infty\left(\left(\sfrac{5}{2}\, \varepsilon^2,+\infty\right) \times \R^d\right)$ and that, by \eqref{eq:L1bound} and Lemma \ref{conv-dert}, 
	\begin{multline}
		\Norm{\partial_t u_\varepsilon}{L^\infty\left(\left(\sfrac{5}{2}\,\varepsilon^2,\infty\right) \times \R^{2d}\right)}+\Norm{\nabla_x u_\varepsilon}{L^\infty\left(\left(\sfrac{5}{2}\,\varepsilon^2,\infty\right) \times \R^{2d}\right)}\\+\Norm{\Delta_vu_\varepsilon}{L^\infty\left(\left(\sfrac{5}{2}\,\varepsilon^2,\infty\right) \times \R^{2d}\right)} \le C\Norm{\psi}{L^1(\R^{2d})}. \label{eq:boundonder}
	\end{multline}
	Furthermore, by \eqref{eq:Linfbound} and the fact that the convolution commutes with $\nabla_v$, we get
	\begin{equation}\label{eq:moltoreg}
		\Norm{\nabla_v u_\varepsilon}{L^\infty\left(\left(\sfrac{5}{2}\,\varepsilon^2,\infty\right) \times \R^{2d}\right)} \le C_u C_\beta(\psi)
	\end{equation}
	for any fixed $\beta \in (0,\alpha)$. We underline that this bound is independent of $\varepsilon>0$. In particular, this means that for fixed $s \in \left(\sfrac{5}{2}\, \varepsilon^2, T\right]$, $u_\varepsilon(s,\cdot) \in {\sf A}(\R^{2d})$ with bounds that are independent of $s$. 
	
	Let us now prove that, for fixed $\varepsilon>0$,
	\begin{equation*}
		t \in \left(\sfrac{5}{2}\, \varepsilon^2, T\right] \mapsto \int_{\R^{2d}}u_\varepsilon (t,z)d\mu_t
	\end{equation*}
	is Lipschitz-continuous, with Lipschitz constant depending on $\varepsilon>0$. 
	
	In order to do this, we rewrite the difference as
	\begin{align}\label{eq:passpreuni2}
		\begin{split}
			\int_{\R^{2d}} &u_\varepsilon(t,z)d\mu_t -\int_{\R^{2d}}u_\varepsilon(s,z)d\mu_s 
			\\
			&=\int_{\R^{2d}} (u_\varepsilon(t,z)-u_\varepsilon(s,z))d\mu_t +\int_{\R^{2d}}u_\varepsilon(s,z)d(\mu_t-\mu_s) \\
			&=  \int_{s}^{t}\int_{\R^{2d}} \partial_t u_\varepsilon(h,z) d\mu_t(z)\, dh 
			+ \int_{\R^{2d}}u_\varepsilon(s,z)d(\mu_s-\mu_t)(z),			
		\end{split}
	\end{align}
	where the first integral on the right-hand side is well-posed by \eqref{eq:boundonder}. Furthermore, since $u_\varepsilon(s,\cdot) \in {\sf A}(\R^{2d})$ we can use \eqref{eq:weaksol} to rewrite the second integral on the right-hand side of \eqref{eq:passpreuni2} as follows 
	\begin{align}
		&\int_{\R^{2d}} u_\varepsilon(t,z)d\mu_t -\int_{\R^{2d}}u_\varepsilon(s,z)d\mu_s 
		=  \int_{s}^{t}\int_{\R^{2d}} \partial_t u_\varepsilon(h,z)d\mu_t(z) \, dh  \label{eq:equalityueps2}\\
		&\quad \quad 
		+ \int_s^t\int_{\R^{2d}} \left(v \cdot \nabla_x u_\varepsilon(s,z)+F(t,z)\cdot \nabla_v u_\varepsilon(s,z)
		+ \frac{\sigma^2}{2}\Delta_v u_\varepsilon(s,z)\right)\, d\mu_h(z) \, dh. \notag
	\end{align}
	Taking the absolute value on both sides and using the aforementioned bounds, we get
	\begin{align*}
		&\left|\int_{\R^{2d}}u_\varepsilon(t,z)d\mu_t -\int_{\R^{2d}}u_\varepsilon(s,z)d\mu_s\right| \\
		&\qquad \le  \int_{s}^{t}\int_{\R^{2d}} |\partial_t u_\varepsilon(h,z)|d\mu_t(z) \, dh \\
		&\qquad \qquad 
		+ \int_s^t\int_{\R^{2d}} \left|v \cdot \nabla_x u_\varepsilon(s,z)+F(t,z)\cdot \nabla_v u_\varepsilon(s,z)
		+\frac{\sigma^2}{2}\Delta_v u_\varepsilon(s,z)\right|d\mu_h(z) \, dh\\
		&\qquad \le C \left(|t-s|+\int_s^t\int_{\R^{2d}}|v|d\mu_h(z)dh\right)\le C|t-s|
	\end{align*}
	where we used that $\{\mu_t\}_{t \in [0,T]} \in C([0,T];\W_1(\R^{2d}))$ and thus its first moments are uniformly bounded. Furthermore, using again \eqref{eq:equalityueps2}, it is clear that
	\begin{align*}
		\frac{d}{d \, t}\int_{\R^{2d}}&u_\varepsilon(t,z)d\mu_t(z)\\
		&=\int_{\R^{2d}}\left(\partial_t u_\varepsilon(t,z)+v \cdot \nabla_x u_\varepsilon(t,z)+F(t,z)\cdot \nabla_v u_\varepsilon(t,z)+\frac{\sigma^2}{2}\Delta_v u_\varepsilon(t,z)\right)d\mu_t(z)\\
		&=\int_{\R^{2d}}\left(Yu_\varepsilon(t,z)+F(t,z)\cdot \nabla_v u_\varepsilon(t,z)+\frac{\sigma^2}{2}\Delta_v u_\varepsilon(t,z)\right)d\mu_t(z).
	\end{align*}
	
	Then, integrating over $[t,T]$ for $t>\sfrac{5}{2}\, \varepsilon^2$, we get
	\begin{align*}
		\int_{\R^{2d}}&u_\varepsilon(T,z)\, d\mu_T(z)-\int_{\R^{2d}}u_\varepsilon(t,z)\, d\mu_t(z)\\
		&=\int_t^T\int_{\R^{2d}}\left(Yu_\varepsilon(s,z)+F(s,z)\cdot \nabla_v u_\varepsilon(s,z)+\frac{\sigma^2}{2}\Delta_v u_\varepsilon(s,z)\right)d\mu_s(z) \, ds.
	\end{align*}
	First of all, we observe that as $\varepsilon \to 0$ the left-hand side converges to
	\begin{equation*}
		\int_{\R^{2d}}u(T,z)\, d\mu_T(z)-\int_{\R^{2d}}u(t,z)\, d\mu_t(z)=-\int_{\R^{2d}}u(t,z)\, d\mu_t(z)
	\end{equation*}
	by the dominated convergence theorem, since by definition and \eqref{eq:Linfbound}
	\begin{equation*}
		\Norm{u_\varepsilon}{L^\infty\left(\left(\sfrac{5}{2}\, \varepsilon^2,+\infty\right)\times \R^{2d}\right)} \le \Norm{u}{L^\infty((0,+\infty)\times \R^{2d})} \le C_uC_\beta(\psi).
	\end{equation*}
	For the right hand side, we rewrite
	\begin{align*}
		\int_t^T&\int_{\R^{2d}}\left(Yu_\varepsilon(s,z)+F(s,z)\cdot \nabla_v u_\varepsilon(s,z)+\frac{\sigma^2}{2}\Delta_v u_\varepsilon(s,z)\right)d\mu_s(z)\\
		&=\int_t^T\int_{\R^{2d}}\left(Yu_\varepsilon(s,z)+(\rho_\varepsilon \ast (F\cdot \nabla_v u))(s,z)+\frac{\sigma^2}{2}\Delta_v u_\varepsilon(s,z)\right)d\mu_s(z)\, ds \\
		&\quad + \int_t^T\int_{\R^{2d}}\left(F(s,z)\cdot \nabla_v u_\varepsilon(s,z)-(\rho_\varepsilon \ast (F\cdot \nabla_v u))(s,z)\right)d\mu_s(z) \, ds\\
		&=\int_t^T\int_{\R^{2d}}\left(\rho_\varepsilon \ast \left(Yu+F\cdot \nabla_v u+\frac{\sigma^2}{2}\Delta_v u\right)\right)(s,z)d\mu_s(z)\, ds \\
		&\quad + \int_t^T\int_{\R^{2d}}\left(F(s,z)\cdot \nabla_v u_\varepsilon(s,z)-(\rho_\varepsilon \ast (F\cdot \nabla_v u))(s,z)\right)d\mu_s(z) \, ds\\
		&=\int_t^T\int_{\R^{2d}}(\rho_\varepsilon \ast \psi)(s,z)d\mu_s(z)\, ds \\
		&\quad + \int_t^T\int_{\R^{2d}}F(s,z)\cdot \nabla_v u_\varepsilon(s,z)\, d\mu_s(z)\, ds-\int_t^T\int_{\R^{2d}}\rho_\varepsilon \ast (F\cdot \nabla_v u)(s,z)\, d\mu_s(z) \, ds,
	\end{align*}
	where in the second last equality we used Lemma \ref{lemY} while in the last one we used Remark \ref{rem:pointwise2}. It is clear that, by the dominated convergence theorem,
	\begin{equation*}
		\lim_{\varepsilon \to 0}\int_t^T\int_{\R^{2d}}(\rho_\varepsilon \ast \psi)(s,z)d\mu_s(z)\, ds=\int_t^T\int_{\R^{2d}} \psi(z)d\mu_s(z)\, ds,
	\end{equation*}
	hence we only need to prove that the second and third integral converge to the same value. To do this, one has simply to apply Lemma \ref{lem:L1Cara} with $m=d$ to the bounded Caratheodory function $\nabla_v u$ and the sublinear one $F$, by $(A_2)$, in the second integral, and with $m=1$ to the bounded Caratheodory function $1$ and the sublinear one  $F \cdot \nabla_v u$ in the third one. Notice that we need estimate  \eqref{eq:Linfbound} to guarantee that $\nabla_v u$ is bounded (and $F \cdot \nabla_v u$ is sublinear by $(A_2)$) and then the lemma can be applied. The Lipschitz property and the fact that \eqref{eq:repr11} holds up to $t=0$ follow immediately.  	
\end{proof}
Now we are ready to prove the following uniqueness result.
\begin{prop}\label{prop:uniqueness}
	Under the assumptions of Theorem~\ref{thm:existence}, Equation~\eqref{probmu} admits at most one solution.
\end{prop}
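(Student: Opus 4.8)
The plan is to run a duality argument based on the representation formula of Lemma~\ref{lem:quellocheeraunaclaim}, whose crucial feature is that the dual (backward Kolmogorov) solution does not see the particular solution $\mu$ of \eqref{probmu}. Fix $T>0$ and let $\{\mu^1_t\}_{t\in[0,T]}$ and $\{\mu^2_t\}_{t\in[0,T]}$ be two solutions of \eqref{probmu} on $[0,T]$ in the sense of Definition~\ref{solution} (obtained, if necessary, by truncating the global solutions), both issued from the same datum $\bar\mu$. Given $\psi\in C^\infty_{\rm c}(\R^{2d})$, let $u$ be the unique bounded strong Lie solution of \eqref{eq:adjeqcinf} provided by Corollary~\ref{corollary-app}. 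The key observation is that, by the explicit formula \eqref{eq:adjsolcinf}, $u$ depends only on $\psi$, $T$, $F$ and $\sigma$, and in particular \emph{not} on the choice of $\mu^1$ or $\mu^2$.

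Applying the identity \eqref{eq:repr11} of Lemma~\ref{lem:quellocheeraunaclaim} at $t=0$ to each of the two solutions and using $\mu^1_0=\mu^2_0=\bar\mu$, we obtain
\[
-\int_0^T\!\!\int_{\R^{2d}}\psi(z)\,d\mu^1_s(z)\,ds=\int_{\R^{2d}}u(0,z)\,d\bar\mu(z)=-\int_0^T\!\!\int_{\R^{2d}}\psi(z)\,d\mu^2_s(z)\,ds.
\]
Now I would exploit the freedom in $T$. Since this identity holds for every $T>0$, and since the maps $s\mapsto g^i(s):=\int_{\R^{2d}}\psi\,d\mu^i_s$ are continuous on $[0,\infty)$ — because $\{\mu^i_t\}\in C([0,\infty);\W_1(\R^{2d}))$ is in particular narrowly continuous and $\psi\in C_{\rm b}(\R^{2d})$ — the equality $\int_0^T g^1=\int_0^T g^2$ for all $T>0$ forces $g^1\equiv g^2$ on $[0,\infty)$ upon differentiation in $T$ via the fundamental theorem of calculus. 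Hence
\[
\int_{\R^{2d}}\psi\,d\mu^1_t=\int_{\R^{2d}}\psi\,d\mu^2_t\qquad\text{for all }t\ge0\text{ and all }\psi\in C^\infty_{\rm c}(\R^{2d}).
\]
Since a finite Borel measure on $\R^{2d}$ is uniquely determined by its integrals against $C^\infty_{\rm c}(\R^{2d})$, we conclude $\mu^1_t=\mu^2_t$ for every $t\ge0$, which is the asserted uniqueness.

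I do not expect any serious obstacle at this stage: essentially all of the difficulty has already been absorbed into the construction of the fundamental solution $p$ (Theorem~\ref{existence-fs2}), the solvability of the backward Kolmogorov equation under $(A_0),(A_1),(A_2)$ (Corollary~\ref{corollary-app}), and the duality identity \eqref{eq:repr11} (Lemma~\ref{lem:quellocheeraunaclaim}). The only point deserving explicit mention is that $u(0,\cdot)$ in \eqref{eq:adjsolcinf} is built purely from the data $\psi,T,F,\sigma$ and is therefore common to both solutions; this is precisely what turns \eqref{eq:repr11} into a statement pinning down $t\mapsto\int\psi\,d\mu_t$ for every test function $\psi$, and hence $\mu_t$ itself. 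A minor technical check is the continuity of $s\mapsto\int\psi\,d\mu^i_s$, which is immediate from the assumed regularity $\{\mu^i_t\}\in C([0,\infty);\W_1(\R^{2d}))$.
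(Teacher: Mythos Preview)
Your argument is correct and follows the same duality route as the paper: apply Lemma~\ref{lem:quellocheeraunaclaim} at $t=0$ to both solutions, use that $u$ (hence $\int u(0,\cdot)\,d\bar\mu$) is independent of the particular solution, and deduce $\int_0^T\!\int\psi\,d(\mu^1_s-\mu^2_s)\,ds=0$. The paper then concludes tersely ``by arbitrariness of $\psi$'' (implicitly also varying $T$), whereas you spell out the final step by differentiating in $T$ via continuity of $s\mapsto\int\psi\,d\mu^i_s$; this is a welcome clarification rather than a different idea.
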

\begin{proof}
	Let $\bm{\mu}^1,\bm{\mu}^2\in C(\R_0^+; \W_1(\R^{2d}))$ be two solutions of \eqref{probmu} and consider $\psi \in C^\infty_c(\R^{2d})$. By Lemma \ref{lem:quellocheeraunaclaim}, for any $\psi \in C^\infty_c(\R^{2d})$, we have 
	\begin{equation*}
		\int_{\R^{2d}}u(t,z)d\mu^j_t(z)=-\int_t^T\int_{\R^{2d}}\psi(z)d\mu^j_s(z)ds, \ j=1,2,
	\end{equation*}
	where $u$ is the unique bounded strong Lie solution of \eqref{eq:adjeqcinf}. Subtracting term by term and evaluating at $0$, we get
	\begin{equation*}
		\int_0^T\int_{\R^{2d}}\psi(z)d(\mu^1_s-\mu^2_s)(z)ds=0,
	\end{equation*}
	whence the thesis by arbitrariness of $\psi$. This clearly applies also to global solutions, by arguing with an arbitrary $T>0$.	
\end{proof}









To prove the existence of a solution, we need, instead, to relate the strong solutions of \eqref{eq:SDEstrongaux-intro} with the ones of \eqref{probmu}. This is established in the following proposition.
\begin{prop}\label{prop:Ito}
	Let the assumptions of Theorem \ref{thm:existence} hold and let $(X,V) \in L^1(\Omega;C([0,T];\R^{2d}))$ be a strong solution of \eqref{eq:SDEstrongaux-intro}. Then the flow of probability measures $\bm{\mu} \in \W_1(C([0,T];\R^{2d}))$, where $\bm{\mu}={\rm Law}(X,V)$, solves \eqref{probmu}.
\end{prop}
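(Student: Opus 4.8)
The plan is to obtain \eqref{eq:weaksol} directly from Itô's formula applied to the strong solution $(X,V)$, with the only quantitative inputs being the integrability granted by $(X,V)\in L^1(\Omega;C([0,T];\R^{2d}))$ and the compact support of the test function. As a preliminary step I would record that $\bm{\mu}:=\mathrm{Law}(X,V)$, seen as a Borel probability measure on $C([0,T];\R^{2d})$, lies in $\W_1(C([0,T];\R^{2d}))$, simply because $\E[\sup_{t\in[0,T]}|(X(t),V(t))|]<\infty$ by hypothesis; hence $\mu_t={\rm ev}_t\sharp\bm{\mu}\in\W_1(\R^{2d})$ for every $t$, and its initial datum is $\mu_0=\mathrm{Law}(X_0,V_0)$. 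The flow $t\mapsto\mu_t$ is continuous for the $1$-Wasserstein distance: $(X(t),V(t))$ and $(X(s),V(s))$ form a coupling of $\mu_t$ and $\mu_s$, so $W_1(\mu_t,\mu_s)\le\E[|(X(t),V(t))-(X(s),V(s))|]$, which tends to $0$ as $s\to t$ by dominated convergence (paths are continuous and dominated by the integrable random variable $\sup_{r\in[0,T]}|(X(r),V(r))|$). Thus $\bm{\mu}\in C([0,T];\W_1(\R^{2d}))$ and it only remains to check the weak identity.

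Fix $\psi\in C^\infty_c(\R^{2d})$. Since $X(t)=X_0+\int_0^tV(s)\,ds$ has absolutely continuous paths and $V(t)=V_0+\int_0^tF(s,X(s),V(s))\,ds+\sqrt{2\sigma}B(t)$ is a continuous semimartingale (a finite-variation part plus a Brownian part), $(X,V)$ is a continuous semimartingale, and Itô's formula for the $C^2$ function $\psi$ applies with no regularity requirement on $F$ in the time variable. Using $d\langle V_i,V_j\rangle_t=2\sigma\,\delta_{ij}\,dt$ and $d\langle X_i,\cdot\rangle_t\equiv0$, one obtains
\begin{multline*}
\psi(X(t),V(t))-\psi(X_0,V_0)=\int_0^t\big(V(s)\cdot\nabla_x\psi+F(s,X(s),V(s))\cdot\nabla_v\psi+\sigma\Delta_v\psi\big)(X(s),V(s))\,ds\\+\sqrt{2\sigma}\int_0^t\nabla_v\psi(X(s),V(s))\cdot dB(s).
\end{multline*}
Taking expectations annihilates the stochastic integral, since $\nabla_v\psi$ is bounded, so $\E\int_0^t 2\sigma|\nabla_v\psi(X(s),V(s))|^2\,ds<\infty$ and the Itô integral is a true martingale.

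It then remains to interchange $\E$ and $\int_0^t$ by Fubini–Tonelli: $\Delta_v\psi$ is bounded; $|V(s)\cdot\nabla_x\psi(X(s),V(s))|\le\|\nabla_x\psi\|_{\infty}\,\sup_{r\in[0,T]}|V(r)|$, which is integrable on $\Omega\times[0,t]$ by the $L^1(\Omega;C([0,T];\R^{2d}))$ assumption; and $F(s,z)\cdot\nabla_v\psi(z)$ is in fact uniformly bounded, because $\nabla_v\psi$ is supported in a fixed compact set $K$ on which, by $(A_1)$, $|F(s,\cdot)|\le C(F)(1+\sup_{z\in K}|z|_B^{\beta})$ uniformly in $s$. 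After the interchange, rewriting each term as $\E[\phi(X(s),V(s))]=\int_{\R^{2d}}\phi\,d\mu_s$ and $\E[\psi(X_0,V_0)]=\int_{\R^{2d}}\psi\,d\mu_0$ turns the identity into exactly \eqref{eq:weaksol}, so $\bm{\mu}$ solves \eqref{probmu}. I do not anticipate a real obstacle here: this is the classical "SDE $\Rightarrow$ Fokker–Planck" computation, and the compact support of $\psi$ neutralizes the unboundedness of $F$; the only points demanding care are the $\W_1$-continuity of the flow and the integrability bookkeeping underlying Fubini, both of which are controlled by the $L^1(\Omega;C([0,T];\R^{2d}))$ hypothesis.
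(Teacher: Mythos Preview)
Your proposal is correct and follows essentially the same route as the paper: apply It\^o's formula to $\psi(X(t),V(t))$ for $\psi\in C^\infty_c(\R^{2d})$, take expectations (using boundedness of $\nabla_v\psi$ to kill the stochastic integral), and justify Fubini to obtain \eqref{eq:weaksol}. The only minor differences are that you explicitly verify the $\W_1$-continuity of the flow (which the paper leaves implicit via the inclusion $\W_1(C([0,T];\R^{2d}))\subset C([0,T];\W_1(\R^{2d}))$), and you bound $F\cdot\nabla_v\psi$ uniformly via the compact support of $\nabla_v\psi$, whereas the paper uses the growth assumption $(A_1)$ together with $\E[\sup_t|Z(t)|]<\infty$; both are valid.
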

\begin{proof}
	 Let $Z=(X,V)$ be a strong solution of \eqref{eq:SDEstrongaux-intro} and $\psi \in C_c^{\infty}(\R^{2d})$. By It\^o's formula (see \cite[Theorem IV.3.3]{revuzyor}) we have
	 \begin{align}\label{eq:Ito}
	 	\begin{split}
	 		\psi(Z(t))&=\psi(Z(0))\\
	 		&+\int_0^t \left(V(s)\cdot \nabla_x \psi(Z(s))+F(s,Z(s))\cdot \nabla_v \psi(Z(s))+\sigma\Delta_v \psi(Z(s))\right)ds\\
	 		&+\sqrt{2\sigma} \int_0^t\nabla_v\psi(Z(s))\, dB(s)		
	 	\end{split}
	 \end{align}
	 Since $\psi \in C^\infty_c(\R^{2d})$, it is clear that $\nabla_v \psi$ is bounded and then (see \cite[Theorem 3.2.1]{oksendal2013stochastic})
	 \begin{equation}\label{eq:zeromean}
	 	\E\left[\int_0^t\nabla_v\psi(Z(s))\, dB(s)\right]=0.
	 \end{equation}
	 Furthermore, observe that, given $\psi \in C^\infty_c(\R^{2d})$ and Assumption $(A_1)$,
	 \begin{equation*}
	 	\left|V(s)\cdot \nabla_x \psi(Z(s))+F(s,Z(s))\cdot \nabla_v \psi(Z(s))+\sigma\Delta_v \psi(Z(s))\right| \le C(1+\sup_{0\le s \le T}|Z(s)|),
	 \end{equation*}
	 where $C$ is a constant depending on $\psi$, $C(F)$ and $\sigma$. Since it is clear that
	 \begin{multline*}
	 	\E\left[\int_0^t \left|V(s)\cdot \nabla_x \psi(Z(s))+F(s,Z(s))\cdot \nabla_v \psi(Z(s))+\sigma\Delta_v \psi(Z(s))\right|\, ds\right] \\
	 	\le CT\left(1+\E\left[\sup_{0 \le s \le T}|Z(s)|\right]\right)<\infty,
	 \end{multline*}
	we can use Fubini's theorem to get
	 \begin{multline}\label{eq:changemean}
	 	\E\left[\int_0^t \left(V(s)\cdot \nabla_x \psi(Z(s))+F(s,Z(s))\cdot \nabla_v \psi(Z(s))+\frac{\sigma^2}{2}\Delta_v \psi(Z(s))\right)\, ds\right]\\
	 	=\int_0^t \int_{\R^{2d}}\left(v\cdot \nabla_x \psi(z)+F(s,z)\cdot \nabla_v \psi(z)+\frac{\sigma^2}{2}\Delta_v \psi(z)\right)\, d\mu_s(z) \, ds.
	 \end{multline}
	 Finally, taking the expectation on both sides of \eqref{eq:Ito} and using \eqref{eq:zeromean} and \eqref{eq:changemean}, we get \eqref{eq:weaksol}. 
\end{proof}
Now we are finally ready to prove the main result.
\begin{proof}[Proof of Theorem \ref{thm:existence}]
Let $(X_0,V_0) \in \cM(\Omega;\R^{2d})$ be independent of $B$ and such that ${\rm Law}(X_0,V_0)=\mu_0$. To prove that \eqref{eq:SDEstrongaux-intro} admits a strong solution, we need to consider two cases:

\textbf{Case 1: $F$ is bounded.} Assume that $F \in L^{\infty}([0, + \infty) \times \R^{2d})$.
Fix $T>0$ and let $\cT:\Omega \times C([0,T];\R^{2d}) \to C([0,T];\R^{2d})$ be defined as follows: for any $\omega \in \Omega$ and $(X,V) \in C([0,T];\R^{2d})$, we set 
\begin{equation*}
\cT(\omega,X,V)=(\cT_X(\omega,X,V),\cT_V(\omega,X,V))	
\end{equation*}
where for any $t \in [0,T]$,
\begin{align*}
	\cT_X(\omega,X,V)(t)&=X_0(\omega)+\int_0^t\cT_V(\omega,X,V)(s)\, ds\\
	\cT_V(\omega,X,V)(t)&=V_0(\omega)+\int_0^tF(s,X(s),V(s))\, ds+\sqrt{2\sigma} B(\omega,t).
\end{align*}
For fixed $(X,V) \in C([0,T];\R^{2d})$, $\cT_V(\cdot,X,V)$ is sum of measurable functions of $\omega \in \Omega$ while and $\cT_X(\cdot,X,V)$ is the sum of a measurable function of $\omega$ with the Riemann integral of a continuous stochastic process, hence $\cT$ is a random operator. Now we want to prove that $\cT$ is a continuous and compact random operator.


We first prove that $\cT$ is a continuous random operator. To do this, fix $\omega \in \Omega$ and consider any sequence $(X_n,V_n) \in C([0,T];\R^{2d})$ converging towards $(X,V) \in C([0,T];\R^{2d})$. Then we have for any $t \in [0,T]$
\begin{equation*}
	\left|\cT_V(\omega,X,V)(t)-\cT_V(\omega,X_n,V_n)(t)\right| \le \int_0^t |F(s,X(s),V(s))-F(s,X_n(s),V_n(s))|\, ds.
\end{equation*}
Taking the supremum over $[0,T]$, we get
\begin{equation*}
	\Norm{\cT_V(\omega,X,V)-\cT_V(\omega,X_n,V_n)}{L^\infty[0,T]} \le \int_0^T|F(s,X(s),V(s))-F(s,X_n(s),V_n(s))|\, ds.
\end{equation*}
Recalling that $F$ is bounded and continuous in the second variable by Assumption $(A_0)$, we can use the dominated convergence theorem to achieve
\begin{equation*}
	\lim_{n \to +\infty}\Norm{\cT_V(\omega,X,V)-\cT_V(\omega,X_n,V_n)}{L^\infty[0,T]}=0.		
\end{equation*}
Once this is done, it is also clear that $\cT_X(\omega,X_n,V_n)$ converges towards $\cT_X(\omega,X,V)$. This proves the continuity of $\cT(\omega, \cdot, \cdot)$ for any $\omega \in \Omega$.

Next, we need to prove that $\cT$ is a compact random operator. To do this we will show that for fixed $\omega \in \Omega$ the range of $\cT(\omega,\cdot,\cdot)$ is composed of equibounded and equi-H\"older functions. Indeed, in such a case, by the Ascoli-Arzel\'a theorem we know that $\cT$ is compact random operator.

So, first, 
we prove that for any $\omega \in \Omega$ there exists a constant $M(\omega)$ such that 
\begin{equation}\label{eq:step2}
	\Norm{\cT(\omega,X,V)}{L^\infty[0,T]} \le M(\omega), \ \quad \forall (X,V) \in C([0,T];\R^{2d}).
\end{equation}
Indeed, we set 
\begin{equation}\label{eq:MB}
	M_B(\omega):=\max_{t \in [0,T]}|B(\omega,t)|	
\end{equation}
and observing that for any $t \in [0,T]$ and any $(X,V) \in C([0,T];\R^{2d})$ we have
\begin{equation*}
	|\cT_V(\omega,X,V)(t)| \le |V_0(\omega)|+T\Norm{F}{L^\infty([0,T]\times \R^{2d})}+\sqrt{2\sigma} M_B(\omega)=:M_V(\omega).
\end{equation*}
Furthermore,
\begin{equation*}
	|\cT_X(\omega,X,V)(t)| \le |X_0(\omega)|+TM_V(\omega)=:M_X(\omega).
\end{equation*}
Hence \eqref{eq:step2} holds with  $M(\omega)=\sqrt{(M_V(\omega))^2+(M_X(\omega))^2}$.

Now we prove the equi-H\"older condition, i.e. we show that for any $\omega \in \Omega$ and for any $\gamma<\sfrac{1}{2}$ there exists a constant $L_\gamma(\omega)$ such that, for all $(X,V) \in C([0,T];\R^{2d})$ and $t,s \in [0,T]$,
\begin{equation}\label{eq:step3}
	|\cT(\omega,X,V)(t)-\cT(\omega,X,V)(s)| \le L_\gamma(\omega)|t-s|^{\gamma}.
\end{equation}
To do this, let 
\begin{equation}\label{eq:LBgamma}
	L_{B,\gamma}(\omega)=\sup_{\substack{t,s \in [0,T] \\ t \not = s}}\frac{|B(\omega,t)-B(\omega,s)|}{|t-s|^{\gamma}},
\end{equation}
which is finite since $B$ is locally H\"older continuous of order $\gamma<\sfrac{1}{2}$ (see \cite[Theorem I.2.2]{revuzyor}). Hence, for any $t,s \in [0,T]$ and $(X,V) \in C([0,T];\R^{2d})$
\begin{equation*}
	|\cT_V(\omega,X,V)(t)-\cT_V(\omega,X,V)(s)| \le \Norm{F}{L^\infty([0,T]\times \R^{2d})}|t-s|+\sqrt{2\sigma} L_{B,\gamma}(\omega)|t-s|^{\gamma}.
\end{equation*}
We set
\begin{equation*}
	L_{V,\gamma}(\omega)=\sqrt{2\sigma}L_{B,\gamma}(\omega)+\Norm{F}{L^\infty([0,T]\times \R^{2d})}T^{1-\gamma}.
\end{equation*}	
Next, observe that
\begin{equation*}
	|\cT_X(\omega,X,V)(t)-\cT_X(\omega,X,V)(s)| \le M_V(\omega)|t-s| \le M_V(\omega)T^{1-\gamma}|t-s|^{\gamma}=:L_{X,\gamma}(\omega)|t-s|^{\gamma}.
\end{equation*}
We obtain \eqref{eq:step3} by setting $L_\gamma(\omega)=\sqrt{(L_{V,\gamma}(\omega))^2+(L_{X,\gamma}(\omega))^2}$.

Once we established that $\cT$ is a continuous and compact random operator, by the Bharucha-Reid Fixed Point Theorem \ref{thm:BRfp}, we know that there exists a stochastic process $(X,V) \in C([0,T];\R^{2d})$ such that $\cT(X,V)=(X,V)$, i.e. $(X,V)$ is a solution of \eqref{eq:SDEstrongaux-intro}. 

Now set $\bm{\mu}={\rm Law}(X,V)$. Let us show that $\bm{\mu} \in \W_1(C([0,T];\R^{2d}))$. To do this, recall that
\begin{equation*}
	V(t)=V_0+\int_0^tF(s,X(s),V(s))\, ds+\sqrt{2\sigma}B(t)
\end{equation*}
and then
\begin{equation}\label{eq:Vtpresup}
	|V(t)| \le |V_0|+\Norm{F}{L^\infty([0,T]\times \R^{2d})}T+\sqrt{2\sigma} M_B.
\end{equation}
By Doob's maximal inequality (see \cite[Theorem II.1.7]{revuzyor}) we have
\begin{equation}\label{eq:expMB}
	\E\left[M_B\right] \le \sqrt{\E\left[M_B^2\right]} \le 2\sqrt{T},
\end{equation}
hence, taking the supremum over $[0,T]$ and the expectation in \eqref{eq:Vtpresup}, we get
\begin{equation}\label{eq:Vtpresup2}
	\E\left[\sup_{t \in [0,T]}|V(t)|\right] \le M_1(\mu_0)+\Norm{F}{L^\infty([0,T]\times \R^{2d})}T+ \sqrt{8\sigma T}=:H_{1,V}(\mu_0,T,F).
\end{equation}
Furthermore, we have
\begin{equation*}
	\sup_{t \in [0,T]}|X(t)| \le |X_0|+T\sup_{t \in [0,T]}|V(t)|
\end{equation*}
so that, taking the expectation,
\begin{equation}\label{eq:Xtpresup2}
	\E\left[\sup_{t \in [0,T]}|X(t)|\right] \le M_1(\mu_0)+TH_{1,V}(\mu_0,T,F)=:H_{1,X}(\mu_0,T,F).
\end{equation}
Setting $H_{1}(\mu_0,T,F)=H_{1,X}(\mu_0,T,F)+H_{1,V}(\mu_0,T,F)$ we have 


\begin{equation}\label{eq:supboundcouple}
	\E\left[\sup_{t \in [0,T]}|(X(t),V(t))|\right] \le \E\left[\sup_{t \in [0,T]}|X(t)|+\sup_{t \in [0,T]}|V(t)|\right] \le H_1(\mu_0,T,F),
\end{equation}
that proves $\bm{\mu} \in \W_1(C([0,T];\R^{2d}))$. By Proposition \ref{prop:Ito} we know that $\bm{\mu}$ solves \eqref{probmu}. In particular, by Proposition \ref{prop:uniqueness}, $\bm{\mu}$ is the unique solution of \eqref{probmu}.


Hence, in particular, if we consider two solutions $(X_1,V_1)$ and $(X_2,V_2)$ with laws respectively $\bm{\mu}^1$ and $\bm{\mu}^2$, then both flows of probability measures belong to $C([0,T];\W_1(\R^{2d}))$ and solve \eqref{probmu}, hence $\bm{\mu}^1=\bm{\mu}^2$ and uniqueness in law holds. Since we have both strong existence and uniqueness in law, by a well-known result by Cherny (see \cite[Theorem 3.2]{Cherny}), we get that the strong solution of \eqref{eq:SDEstrongaux-intro} is pathwise unique. Once we established existence of local solutions and pathwise uniqueness, the fact that $T>0$ is arbitrary guarantees that the solution is global.


\textbf{Case 2: $F$ is unbounded.} Consider a sequence of cut-off functions $(\eta_n)_{n \in \N} \subset C^\infty_c(\R^{2d})$ with support on the ball $B_{n+1}$ of $\R^{2d}$, $\eta_n \equiv 1$ on $B_n$ and $\eta_n(x) \in [0,1]$. Let $F_n(s,Z)=F(s,Z)\eta_n(Z)$ and consider the sequence $Z_n=(X_n,V_n)$ of strong solutions of \eqref{eq:SDEstrongaux-intro} with nonlinearity $F_n$. Define the sequence of stopping times
\begin{equation*}
	\tau_n:=\inf\{t>0: \ |Z_n(t)|>n\}.	
\end{equation*}
First, let us prove that this sequence of stopping times is almost surely non-decreasing. To do this, consider $m \le n$ and define the events
\begin{equation*}
	E_m:=\{(X_0,V_0) \in B_m\}, \qquad E_m^c:=\Omega \setminus E_m.
\end{equation*}
Consider then
\begin{equation*}
	\bP(\tau_m \le \tau_n)=\bP(\tau_m \le \tau_n | E_m)\bP(E_m)+\bP(\tau_m \le \tau_n | E_m^c)\bP(E_m^c).
\end{equation*}
First observe that on $E_m^c$, clearly, $\tau_m=0$. Hence
\begin{equation*}
	\bP(\tau_m \le \tau_n | E_m^c)=1.
\end{equation*}
We only need to prove that
\begin{equation}\label{eq:cond1}
	\bP(\tau_m \le \tau_n | E_m)=1.
\end{equation}
To do this, consider the probability space $(\Omega, \mathcal{F}, \bP(\cdot \mid E_m))$. Then, since $B$ is still a Brownian motion in this probability space, $(X_m,V_m)$ and $(X_n,V_n)$ are still strong solutions of \eqref{eq:SDEstrongaux-intro} with nonlinearity $F_m$ and $F_n$. By continuity we know that $\tau_m,\tau_n>0$ almost surely. Furthermore, for $t \in [0,\min\{\tau_m,\tau_n\})$  $(X_m,V_m)$ and $(X_n,V_n)$ both satisfy \eqref{eq:SDEstrongaux-intro} with non-linearity $F$, since $(X_j(t),V_j(t)) \in B_j$ for all $t \in [0,\min\{\tau_m,\tau_n\})$, $j=m,n$. Hence, by pathwise uniqueness, the two solutions must coincide up to $\min\{\tau_m,\tau_n\}$. However, by continuity, we get
$|(X_n(\tau_m),V_n(\tau_m))|=|(X_m(\tau_m),V_m(\tau_m))|=m$, that implies $(X_n(\tau_m),V_n(\tau_m)) \in B_n$. Thus $\tau_n \ge \tau_m$ a.s. given $E_m$ and then \eqref{eq:cond1} follows. 

Since $(\tau_n)_{n \ge 1}$ is non-decreasing, we can consider the stopping time $\tau_\infty=\lim_{n \to \infty}\tau_n$. Now we want to prove that $\tau_\infty=\infty$ a.s. To do this, fix $n \in \N$ and observe that
\begin{align*}
	|V_n(t)| &\le |V_0|+\int_0^t |F(s,Z_n(s))||\eta_n(Z_n(s))|ds+\sqrt{2\sigma}|B(t)| \\
	&\le |V_0|+C(F)t+C(F)\int_0^t|X_n(s)|^\frac{\beta}{3}ds+C(F)\int_0^t|V_n(s)|^\beta ds+\sqrt{2\sigma}|B(t)|.
\end{align*}
Since $\beta \le 1$, we can apply Young's inequality with $p=\frac{3}{\beta}$ to the first integrand and with $p=\frac{1}{\beta}$ to the second integrand. This leads to
\begin{align*}
	|V_n(t)|\le |V_0|+C(F)\left(3-\frac{4\beta}{3}\right)t+C(F)\beta\int_0^t(|X_n(s)|+|V_n(s)|)ds+\sqrt{2\sigma}\max_{0 \le s \le t}|B(s)|.
\end{align*}
Furthermore, we clearly have
\begin{equation*}
	|X_n(t)| \le |X_0|+\int_0^t|V_n(s)|ds,
\end{equation*}
hence
\begin{align*}
	|Z_n(t)|\le C\left(|Z_0|+t+\int_0^t|Z_n(s)|ds+\max_{0 \le s \le t}|B(s)|\right),
\end{align*}
where the constant $C$ only depends on $F$, $\beta$ and $\sigma$. By Gr\"onwall inequality, we get
\begin{align}\label{eq:Gronwall2}
	|Z_n(t)| \le C\left(|Z_0|+t+\max_{0 \le s \le t}|B(s)|\right)e^{Ct}.
\end{align}
In particular, for $t=\tau_n$,
\begin{align*}
	n \le C\left(|Z_0|+\tau_n+\max_{0 \le s \le \tau_n}|B(s)|\right)e^{C\tau_n}.
\end{align*}
Now assume by contradiction that $\bP(\tau_\infty<\infty)>0$ and consider any $\omega$ such that $\tau_\infty(\omega)<\infty$ and $s \in \R_0^+ \mapsto B(s,\omega)$ is continuous. For such a fixed $\omega$, taking the limit as $n \to +\infty$, we achieve
\begin{align*}
	\infty \le C\left(|Z_0|+\tau_\infty(\omega)+\max_{0 \le s \le \tau_\infty(\omega)}|B(s,\omega)|\right)e^{C\tau_\infty(\omega)}<\infty,
\end{align*}
that is absurd. Hence $\tau_\infty=\infty$ almost surely. Set also, for convenience, $\tau_0=0$. Now define, for any $t \ge 0$
\begin{equation*}
	N(t)=\min\{n \in \N: \ t \le \tau_n\},
\end{equation*}
that is well-defined since $\tau_n \to \infty$ almost surely. Recall that, by pathwise uniqueness, we have $(X_m(t),V_m(t))=(X_n(t),V_n(t))$ for $t \in [0,\tau_m]$ whenever $m \le n$. Hence, the process
\begin{equation*}
	X(t)=X_{N(t)}(t), \ V(t)=V_{N(t)}(t), \ \forall \, t \ge 0
\end{equation*}
is well-defined. It remains to prove that $(X,V)$ is a solution to \eqref{eq:SDEstrongaux-intro}. Indeed, for any $t \ge 0$ we have
\begin{align*}
	V(t)&=V_{N(t)}(t)=V_0+\int_0^t F_{N(t)}(s,Z_{N(t)}(s))ds+\sigma B(t)\\
	&=V_0+\int_0^t F(s,Z_{N(s)}(s))ds+\sigma B(t)\\
	&=V_0+\int_0^t F(s,Z(s))ds+\sigma B(t).
\end{align*}
This proves that $(X,V)$ is the unique strong solution of \eqref{eq:SDEstrongaux-intro}. Now observe that, for fixed $T>0$, arguing as we did for \eqref{eq:Gronwall2}, we have
\begin{equation*}
	\sup_{t \in [0,T]}|Z(t)| \le C\left(|Z_0|+T+M_B\right)e^{CT}
\end{equation*}
and taking the expectation we have
\begin{equation*}
	\E\left[\sup_{t \in [0,T]}|Z(t)|\right] \le C\left(\E[|Z_0|]+T+\sqrt{T}\right)e^{CT}<\infty.
\end{equation*}
Hence, if we set $\bm{\mu}:={\rm Law}(X,V)$, we have $\bm{\mu} \in \W_1(C([0,T];\R^{2d}))$. By Proposition \ref{prop:Ito}, this guarantees that $\bm{\mu}$ solves \eqref{probmu} and then by Proposition \ref{prop:uniqueness} we know that it is the unique solution.
\end{proof}



\newpage

\appendix
\section{Group convolutions} \label{app:A}
Now, we recall the definition of family of mollifiers and 
of convolution with respect to the Lie group $\mathbb{K}$. 
First of all, we consider a function $\rho \in C^{\infty}_{\rm c}(\mathbb{R}^{1+ 2d})$ such that 
\begin{enumerate}
	\item $\rho \ge 0$ in $\mathbb{R}^{1+2d}$; 
	\item supp$\rho \subseteq {\sf B}_1(\sfrac32,0,0)$, where ${\sf B}_1(\sfrac32,0,0)$ is the $d_{\mathbb{K}}$-ball of radius $1$ and center at the point $(\frac32,0,0)$; 
	\item $\int_{\mathbb{R}^{1+2d}} \rho(\tau, \zeta) \, d \tau \, d\zeta = 
	\int_{{\sf B}_1(\sfrac32,0,0)} \rho(\tau, \zeta) \, d\tau \, d\zeta =1$.
\end{enumerate}
Then, for every~$\varepsilon>0$ we define the convolution kernel 
\begin{equation} \label{mollifier}
	\rho_\varepsilon(\tau, \zeta) := \varepsilon^{-4d-2} \rho \big( \Phi_{1 / \varepsilon} (\tau, \zeta) \big).
\end{equation}	 
Moreover, if we consider a function $f \in C([0,+\infty) \times \mathbb{R}^{2d})$, then the mollifier of $f$ related to $\rho_\varepsilon$ is
a function $f_\varepsilon : (\sfrac52 \, \varepsilon^2, + \infty) \times \mathbb{R}^{2d} \rightarrow \mathbb{R}$ defined as 
\begin{align}\label{convolution}
	f_{\varepsilon} (t,z) := \rho_\varepsilon \ast f(t,z)
	&:= \int _{[0, + \infty) \times \mathbb{R}^{2d}}  
	\rho_\varepsilon \left( (t,z) \circ (\tau, \zeta)^{-1} \right) f(\tau, \zeta) \, d \tau \, d \zeta \\ \nonumber
	& = \int _{{\sf B}_1(\sfrac32,0,0)}  
	\rho(\tau, \zeta) f \left( \Phi_\varepsilon (\tau, \zeta )^{-1} \circ (t,z)\right) \, d\tau \, d \zeta .
\end{align} 
This definition is well-posed since 
\begin{enumerate}
	\item for every fixed $(t,z) \in (\sfrac52 \, \varepsilon^2,+\infty) \times \mathbb{R}^{2d}$ we have
	\begin{equation*}
		{\rm supp} \left( (\tau, \zeta) \mapsto \rho_\varepsilon \left( (t,z) \circ (\tau, \zeta)^{-1} \right) \right)
		\subseteq [0,+\infty) \times \mathbb{R}^{2d};		
	\end{equation*}	
	\item for every $(\zeta, \tau) \in {\sf B}_1(\sfrac32,0,0)$ and $(t, z) \in (\sfrac52\,  \varepsilon^2, + \infty) \times \mathbb{R}^{2d}$ there holds
	\begin{align*}
		\left( \Phi_\varepsilon (\tau, \zeta)^{-1} \circ (t,z) \right) \in (0, + \infty) \times \mathbb{R}^{2d}.
	\end{align*} 
\end{enumerate}

Additionally, given $\rho \in C^{\infty}_0(\mathbb{R}^{1+2d})$, by a standard dominated convergence argument we may infer 
$f_\varepsilon \in C^{\infty}((\sfrac52 \, \varepsilon^2, + \infty) \times \mathbb{R}^{2d})$. Firstly, we begin observing that the regularization procedure does not change the upper bounds of the involved function.

\begin{lem}\label{lem:sublinearafterconv}
	Let $f:[0,T] \times \R^{2d} \to \R$ be a Caratheodory function such that for some $C>0$, with $\beta \in (0,1)$,
	$$
	|f(t,z)| \le C \left( 1 + \|z\|_B^\beta \right) \quad \forall z \in \R^{2d},
	$$
	where $\| \cdot \|_B$ is defined in \eqref{groupnorm}, 
	and set $f_\varepsilon=\rho_\varepsilon \ast f$. Then there exists a constant $\bar{C}>0$ such that 
	$$
	|f_\varepsilon(t,z)| \le \bar{C} \left( 1 + \|z\|_B^\beta \right) \quad \forall z \in \R^{2d}.
	$$
\end{lem}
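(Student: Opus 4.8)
The plan is to push the growth bound on $f$ through the defining formula \eqref{convolution}. Recall that, by \eqref{convolution},
\begin{equation*}
	f_\varepsilon(t,z) = \int_{{\sf B}_1(\sfrac32,0,0)} \rho(\tau,\zeta)\, f\bigl(\Phi_\varepsilon(\tau,\zeta)^{-1}\circ(t,z)\bigr)\, d\tau\, d\zeta,
\end{equation*}
so, since $\int_{{\sf B}_1(\sfrac32,0,0)}\rho=1$, it is enough to bound $\|z'\|_B$, where $z'$ denotes the $\R^{2d}$-component of $\Phi_\varepsilon(\tau,\zeta)^{-1}\circ(t,z)$, uniformly over $(\tau,\zeta)$ in the (bounded) set ${\sf B}_1(\sfrac32,0,0)$. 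Writing $\zeta=(\xi,w)$ and $z=(x,v)$, a direct computation with \eqref{group_law} and \eqref{dilation} gives $\Phi_\varepsilon(\tau,\zeta)^{-1}\circ(t,z)=(t-\varepsilon^2\tau,\,z')$ with
\begin{equation*}
	z'=(x',v'),\qquad x'=x-\varepsilon^3\xi+\varepsilon^3\tau w-\varepsilon t w,\qquad v'=v-\varepsilon w.
\end{equation*}

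First I would record two elementary facts: since ${\sf B}_1(\sfrac32,0,0)$ is bounded, there is a constant $R=R(d)>0$ with $|\tau|+|\xi|+|w|\le R$ for all $(\tau,\xi,w)$ in it (Euclidean norms); and since $f_\varepsilon(t,z)$ is only defined when $t-\varepsilon^2\tau\in[0,T]$, one has $t\le T+\varepsilon^2\tau\le T+3$ for $\varepsilon\in(0,1)$, so the factor $|t|$ appearing in $x'$ is bounded by a constant depending only on $T$. Then, using the subadditivity of the concave functions $s\mapsto s^{1/3}$ and $s\mapsto s^\gamma$, $\gamma\in(0,1]$, on $[0,\infty)$, I would estimate componentwise
\begin{equation*}
	|v_i'|\le|v_i|+\varepsilon|w_i|,\qquad |x_i'|^{\frac13}\le|x_i|^{\frac13}+\varepsilon|\xi_i|^{\frac13}+\bigl(\varepsilon^3|\tau|\,|w_i|\bigr)^{\frac13}+\bigl(\varepsilon|t|\,|w_i|\bigr)^{\frac13},
\end{equation*}
and, using $\varepsilon<1$ together with the above bounds on $\tau$, $\xi$, $w$, $t$ and summing over $i=1,\dots,d$, obtain a constant $C_1=C_1(d,T)>0$ such that $\|z'\|_B\le\|z\|_B+C_1$. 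Finally, the subadditivity $(\|z\|_B+C_1)^\beta\le\|z\|_B^\beta+C_1^\beta$ together with $1+a+b\le(1+a)(1+b)$ for $a,b\ge0$ gives $1+\|z'\|_B^\beta\le(1+C_1^\beta)(1+\|z\|_B^\beta)$.

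Plugging this into the formula above, together with the growth hypothesis on $f$ (valid at the time $t-\varepsilon^2\tau\in[0,T]$) and $\int_{{\sf B}_1(\sfrac32,0,0)}\rho=1$, yields
\begin{equation*}
	|f_\varepsilon(t,z)|\le C\,(1+C_1^\beta)\bigl(1+\|z\|_B^\beta\bigr),
\end{equation*}
which is the claim with $\bar C:=C(1+C_1^\beta)$, a constant depending only on $C$, $\beta$, $d$, $T$ and in particular independent of $\varepsilon\in(0,1)$. I expect the only mildly delicate point to be the bookkeeping for $x'$: the summand $-\varepsilon t w$ is the one term not controlled by the homogeneity of $\Phi_\varepsilon$, and it is harmless precisely because the time variable lies in a bounded interval by the definition of $f_\varepsilon$; everything else is a routine use of the concavity of power functions.
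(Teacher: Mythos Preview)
Your argument is correct and follows essentially the same route as the paper's proof: both push the growth hypothesis through the convolution by showing that the spatial variable moves by at most an additive constant, then use subadditivity of $s\mapsto s^\beta$. You work with the change-of-variables form \eqref{convolution} and compute $z'$ explicitly, whereas the paper uses the other form of the convolution and bounds $\|\zeta\|_B$ on the support $\Gamma_\varepsilon(t,z)$; your treatment is in fact a bit more careful, since you isolate the term $-\varepsilon t w$ and explain why the resulting constant depends on $T$, a point the paper's proof leaves implicit.
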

\begin{proof}
	Just observe that for any $(t,z) \in \R^{1+2d}$ , by the above assumptions we have
	\begin{align*}
		\left|f_\varepsilon(t,z)\right| 
		&\le \int_{[0,+\infty) \times \R^{2d}}\rho_{\varepsilon}\left((t,z)\circ (\tau,\zeta)^{-1}\right)\left|f(\tau,\zeta)\right|d\tau \, d\zeta \\
		&\le C \left( 1 + \int_{\Gamma_\varepsilon(t,x,v)}\rho_{\varepsilon}\left((t,z)\circ (\tau,\zeta)^{-1}\right)  \| \zeta \|_B^\beta  d\tau \, d\zeta \right),
	\end{align*}
	where we used the fact that $\Norm{\rho}{L^1([0,+\infty) \times \R^{2d}}=1$ and we denoted by
	$\Gamma_\varepsilon(t,x,v)$ the support of $(\tau,\zeta) \mapsto \rho_\varepsilon((t,z) \circ (\tau,\zeta)^{-1})$.
	
	Next, we observe there exists a constant $\mathfrak{C}>0$ such that 
	$\|\zeta\|_B \le \|z\|_B+\mathfrak{C}\varepsilon \le \|z\|_B+\mathfrak{C}$ and hence, recalling $\beta \in (0,1)$, we have
	\begin{align*}
		|f_\varepsilon(t,z)|& \le \mathfrak{C} \left(1+\| z \|_B + \mathfrak{C} \right) \le \bar{C}(1+\| z \|^\beta_B),
	\end{align*}
	where we also used the fact that $\Norm{\rho_\varepsilon}{L^1(\R \times \R^{2d})}=1$.
\end{proof}

We also recall some estimates involving the derivatives of regularized functions.
\begin{lem}
	\label{conv-dert}
	Let $f \in C([0,\infty) \times \R^{2d}) \cap L^1([0,\infty) \times \R^{2d})$ and let $\{\rho_\varepsilon\}_{\varepsilon>0}$ be a family of mollifiers as defined above. 
	Set also $f_\varepsilon:=\rho_\varepsilon \ast f$. Then for every $\varepsilon >0$ and for every $(t,z) = (t,x,v) \in \left(\frac{5}{2}\varepsilon^2,+\infty\right)\times \R^{2d}$ it holds
	\begin{equation*}
		|\partial_t f_\varepsilon(t,z)| \le \frac{C}{\varepsilon^{4d+4}} \| f \|_{L^1\left( [0,\infty) \times \R^{2d} \right) }, \quad |\partial_{x_i} f_\varepsilon(t,z)| \le \frac{C}{\varepsilon^{4d+5}} \| f \|_{L^1\left( [0,\infty) \times \R^{2d} \right) } \, \, \, \forall i =1, \ldots, d,
	\end{equation*}
	where $C>0$ is a suitable constant only depending on $\rho$.
	
	Furthermore, if there exists $T>0$ such that $f(t,z)=0$ for any $t \ge T$ and $z \in \R^{2d}$, then, for $\varepsilon \in (0,1)$ and $(t,z) = (t,x,v) \in \left(\frac{5}{2}\varepsilon^2,+\infty\right) \times \R^{2d}$,
	\begin{equation*}
		|\partial_{v_i} f_\varepsilon(t,z)| \le \frac{CT}{\varepsilon^{4d+3}} \| f \|_{L^1\left( [0,T] \times \R^{2d} \right) }, \qquad |\partial_{v_i \, v_j} f_\varepsilon(t,z)| \le \frac{CT^2}{\varepsilon^{4d+4}} \| f \|_{L^1\left( [0,T] \times \R^{2d} \right) },
	\end{equation*}
	for every $i,j = 1, \ldots, d$ and where $C>0$ is a constant depending on $\rho$.
\end{lem}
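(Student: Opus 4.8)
The plan is to differentiate the defining integral \eqref{convolution} for $f_\varepsilon$ under the integral sign and to estimate the $(t,z)$-derivatives of the mollifying kernel $(\tau,\zeta)\mapsto\rho_\varepsilon\big((t,z)\circ(\tau,\zeta)^{-1}\big)$ directly, exploiting the explicit scaling $\rho_\varepsilon=\varepsilon^{-4d-2}\,\rho\circ\Phi_{1/\varepsilon}$ together with the support property $\mathrm{supp}\,\rho\subseteq{\sf B}_1(\sfrac32,0,0)$. First I would record the explicit form of the kernel: writing $\zeta=(\xi,\nu)$ and combining \eqref{group_law} with the formula for the group inverse, one has $(t,z)\circ(\tau,\zeta)^{-1}=\big(t-\tau,\ x-\xi-\tau(v-\nu),\ v-\nu\big)$, whence
\begin{equation*}
\rho_\varepsilon\big((t,z)\circ(\tau,\zeta)^{-1}\big)=\varepsilon^{-4d-2}\,\rho\!\left(\frac{t-\tau}{\varepsilon^{2}},\ \frac{x-\xi-\tau(v-\nu)}{\varepsilon^{3}},\ \frac{v-\nu}{\varepsilon}\right).
\end{equation*}
Since $\rho\in C^\infty_{\rm c}$, for $(t,z)$ ranging in a neighbourhood of a given point (with $t>\sfrac52\varepsilon^{2}$) the kernel and each of its $(t,z)$-derivatives are supported in a common compact set in $(\tau,\zeta)$ and are uniformly bounded there; as $f\in L^1$, dominated convergence then justifies differentiation under the integral sign (this is the same argument that gives $f_\varepsilon\in C^\infty$). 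Note moreover that the kernel vanishes unless the first argument of $\rho$ lies in $(\sfrac12,\sfrac52)$, i.e.\ unless $\tau\in\big(t-\sfrac52\varepsilon^{2},\,t-\sfrac12\varepsilon^{2}\big)$.

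The $\partial_t$- and $\partial_{x_i}$-estimates are then immediate: $t$ enters only the first argument of $\rho$, and $x_i$ only the second, so the chain rule produces exactly one extra factor $\varepsilon^{-2}$, respectively $\varepsilon^{-3}$, besides the prefactor $\varepsilon^{-4d-2}$. Bounding the corresponding partial of $\rho$ by its sup-norm and integrating against $|f|$ gives $|\partial_t f_\varepsilon|\le C\varepsilon^{-4d-4}\|f\|_{L^1}$ and $|\partial_{x_i}f_\varepsilon|\le C\varepsilon^{-4d-5}\|f\|_{L^1}$ with $C=C(\rho)$; no assumption on the support of $f$ is needed here, since no factor depending on $\tau$ appears in these two kernel derivatives.

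The $v$-derivatives are the heart of the matter, and here the anisotropy of the group intervenes: $v_i$ enters \emph{two} arguments of $\rho$ — the third one directly (chain-rule factor $\varepsilon^{-1}$) and the second one through the term $-\tau v_i/\varepsilon^{3}$ (chain-rule factor $-\tau\varepsilon^{-3}$), so
\begin{equation*}
\partial_{v_i}\Big[\rho_\varepsilon\big((t,z)\circ(\tau,\zeta)^{-1}\big)\Big]=\varepsilon^{-4d-2}\left(\frac1\varepsilon\,(\partial_{v_i'}\rho)(\cdot)-\frac{\tau}{\varepsilon^{3}}\,(\partial_{x_i'}\rho)(\cdot)\right),
\end{equation*}
and a second differentiation in $v_j$ produces the analogous expression with terms carrying $\varepsilon^{-2}$, $\tau\varepsilon^{-4}$ and $\tau^{2}\varepsilon^{-6}$. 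The factor $\tau$ is exactly why the hypothesis "$f\equiv0$ for $t\ge T$" is assumed: on the support of the kernel one has $\tau<t$, and combining this with $f(\tau,\zeta)=0$ for $\tau\ge T$ the full integrand lives in $\{0\le\tau<T\}$, so that $|\tau|\le T$ throughout. Bounding the partials of $\rho$ by their sup-norms, using $\varepsilon<1$ to absorb the lower negative powers of $\varepsilon$ into the dominant one, and integrating $|f|$ over $[0,T]\times\R^{2d}$ then gives the last two estimates, with the asserted powers of $T$ and of $1/\varepsilon$.

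The single delicate point is this appearance of the factor $\tau$ in the $v$-differentiation: it is what forces the finite-horizon $L^1$-norm and produces the powers of $T$, while once it is isolated everything reduces to routine chain-rule bookkeeping and the standard differentiation-under-the-integral-sign argument. A cleaner way to organise the $v$-case (should one wish to) is to write $\partial_{v_i}=(\partial_{v_i}+t\,\partial_{x_i})-t\,\partial_{x_i}$, the first summand being the right-invariant vector field, which commutes with the group convolution and so differentiates $\rho_\varepsilon$ only, and the second being controlled by the already-proven $\partial_{x_i}$-bound together with the fact that $t$, on the support of $f_\varepsilon$, is $\le T+\sfrac52\varepsilon^{2}$; this makes the structure of the estimate and the role of $T$ transparent.
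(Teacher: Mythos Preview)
Your proposal is correct and follows essentially the same route as the paper: both differentiate \eqref{convolution} under the integral sign (justified by dominated convergence and the compact support of $\rho$), apply the chain rule to the explicit scaled kernel, bound the resulting partials of $\rho$ by their sup-norms, and for the $v$-derivatives isolate the extra factor $\tau$ arising from the cross-term $-\tau(v-\nu)$ in the group law and control it by $T$ via the time-support of $f$. Your write-up is in fact more explicit than the paper's (which leaves $\partial_{x_i}$ and the second $v$-derivative entirely to the reader), and your closing observation on the right-invariant decomposition $\partial_{v_i}=(\partial_{v_i}+t\,\partial_{x_i})-t\,\partial_{x_i}$ is a pleasant structural remark not present there.
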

\begin{proof}
	Let us fix $(t,z) \in (\sfrac52 \, \varepsilon^2,+\infty) \times \mathbb{R}^{2d}$, 
	then by definition \eqref{convolution} of convolution, for every $\varepsilon >0$ we have
	\begin{align*}
		|\partial_t f_{\varepsilon} (t,z)| 
		&:= \left|\partial_t \int_{0}^{+\infty}  \int _{ \mathbb{R}^{2d}}  
		\rho_\varepsilon \left( (t,z) \circ (\tau, \zeta)^{-1} \right) f(\tau, \zeta) \, d \tau \, d \zeta \right|\\ \nonumber
		&= \frac{1}{\varepsilon^{4d+4}} \, \left|\int_{0}^{+\infty}  \int _{ \mathbb{R}^{2d}}  
		\partial_t \rho \left( \Phi_{\frac{1}{\varepsilon}} \left( (t,z) \circ (\tau, \zeta)^{-1} \right) \right) f(\tau, \zeta) \, d \tau \, d \zeta \right|\\
		&\le \frac{C}{ \varepsilon^{4d+4} } \| f \|_{L^1( [0, + \infty) \times \mathbb{R}^{2d} )}.	
	\end{align*}
	where $(t,z)=(t,x,v)$ and $(\tau, \zeta)= (\tau, \xi, \eta)$, we applied Lebesgue Theorem and estimated the absolute value 
	on the right-hand side. 
	
	The argument for the second inequality is the same. In order to handle the third inequality, we observe that
	
	\begin{align*}
		|\partial_{v_i} f_{\varepsilon} (t,z)| 
		&:= \left|\partial_{v_i} \int_{0}^{+\infty}  \int _{ \mathbb{R}^{2d}}  
		\rho_\varepsilon \left( (t,z) \circ (\tau, \zeta)^{-1} \right) f(\tau, \zeta) \, d \tau \, d \zeta \right|\\ \nonumber
		&=  \frac{1}{\varepsilon^{4d+2}} \, \left|\partial_{v_i}  \int_{0}^{+\infty}  \int _{ \mathbb{R}^{2d}}  
		\rho \left( \Phi_{\frac{1}{\varepsilon}} \left( (t,z) \circ (\tau, \zeta)^{-1} \right) \right) f(\tau, \zeta) \, d \tau \, d \zeta\right| \\ \nonumber
		&= \frac{1}{\varepsilon^{4d+2}} \, \left(\frac{1}{\varepsilon^3}\left|\int_{0}^{T}  \int _{ \mathbb{R}^{2d}}  
		\tau \partial_{v_i} \rho \left( \Phi_{\frac{1}{\varepsilon}} \left( (t,z) \circ (\tau, \zeta)^{-1} \right) \right) f(\tau, \zeta) \, d \tau \, d \zeta \right|\right.\\
		&\left.+\frac{1}{\varepsilon}\left|\int_{0}^{T}  \int _{ \mathbb{R}^{2d}}  
		\partial_{v_i} \rho \left( \Phi_{\frac{1}{\varepsilon}} \left( (t,z) \circ (\tau, \zeta)^{-1} \right) \right) f(\tau, \zeta) \, d \tau \, d \zeta \right|\right)\\
		&\le \frac{C T}{ \varepsilon^{4d+5} } \| f \|_{L^1( [0, T] \times \mathbb{R}^{2d} )}.	
	\end{align*}
	The fourth inequality can be carried out analogously.
\end{proof}

Finally, we prove the following approximation result of the integral of functions against narrowly continuous flows of probability measures.
\begin{lem}\label{lem:L1Cara}
	Let $f:[0,+\infty) \times \R^{2d} \to \R$ be a Carathéodory function such that for a certain $ \beta \in (0,1)$
	$$
	|f(t,z)| \le C \left( 1 + \| z \|_B^\beta \right) \qquad \forall z \in \R^{2d},
	$$ 
	and let us denote $f_\varepsilon=\rho_\varepsilon \ast f$. 
	Furthermore, let
	$G:[0,+\infty) \times \R^{2d} \to \R$ be bounded Caratheodory function and $\{\mu_{t}\}_{t \in [0,T]} \in C([0,T];\cP(\R^{2d}))$. 
	Then, as $\varepsilon \to 0$, for all $t>0$,
	\begin{equation*}
		\int_t^T \int_{\R^{2d}}G(s,z)\cdot f_\varepsilon(s,z) d\mu_s(z)\, ds \to \int_t^T \int_{\R^{2d}}G(s,z) \cdot f(s,z) d\mu_s(z) \, ds.
	\end{equation*}
\end{lem}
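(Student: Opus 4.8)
\emph{Proof sketch.} The plan is to reduce to a jointly continuous $f$—for which $f_\varepsilon\to f$ locally uniformly—and then to pass to a merely Carath\'eodory $f$ by a Scorza--Dragoni approximation. We may assume $G,f$ real--valued (the vector case is componentwise) and, rescaling, $\|G\|_\infty\le 1$; set $R_\varepsilon:=\int_t^T\int_{\R^{2d}}G\,(f_\varepsilon-f)\,d\mu_s\,ds$, which makes sense once $\tfrac52\varepsilon^2<t$. By Lemma~\ref{lem:sublinearafterconv} there is $\bar C$, independent of $\varepsilon\in(0,1)$, with $|f_\varepsilon(s,z)|\le\bar C(1+\|z\|_B^\beta)$; since $\beta<1$, the weight $\|z\|_B^\beta$ is uniformly integrable along $\{\mu_s\}_{s\in[t,T]}$ whenever the flow has uniformly bounded first moments (as in all the applications, where $\{\mu_s\}\in C([0,T];\W_1(\R^{2d}))$), so $C_1:=\sup_{s\in[t,T]}\int_{\R^{2d}}(1+\|z\|_B^\beta)\,d\mu_s<\infty$ and every integral involved is finite. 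We must show $R_\varepsilon\to 0$.

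First I would take $f\in C([0,\infty)\times\R^{2d})$ (still with the sublinear bound). From \eqref{convolution}, for $(s,z)$ in a compact set $\Phi_\varepsilon(\tau,\zeta)^{-1}\circ(s,z)\to(s,z)$ uniformly over $(\tau,\zeta)\in{\rm supp}\,\rho$, so uniform continuity of $f$ on compacts gives $f_\varepsilon\to f$ locally uniformly on $[t,T]\times\R^{2d}$—this is exactly where $t>0$ enters, so that $f_\varepsilon$ is defined on $[t,T]\times\R^{2d}$ for $\varepsilon$ small. Splitting $R_\varepsilon$ over $\{\|z\|_B\le R\}$ and its complement, the first piece $\to0$ as $\varepsilon\to0$ for each fixed $R$ (uniform convergence against $d\mu_s\,ds$, of finite mass $T-t$), while the second is $\le(\bar C+C)\int_t^T\int_{\{\|z\|_B>R\}}(1+\|z\|_B^\beta)\,d\mu_s\,ds$, small for large $R$ uniformly in $\varepsilon$ by the uniform integrability above; hence $R_\varepsilon\to0$.

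For a general Carath\'eodory $f$, fix $\eta>0$: by the Scorza--Dragoni theorem there is a compact $J\subseteq[\tfrac t2,T]$ with $|[\tfrac t2,T]\setminus J|<\eta$ on which $f$ is jointly continuous, and—via Tietze's theorem followed by the pointwise median with $\pm C(1+\|z\|_B^\beta)$—a jointly continuous $\tilde f$ on $[0,\infty)\times\R^{2d}$ with $|\tilde f|\le C(1+\|z\|_B^\beta)$ and $\tilde f=f$ on $J\times\R^{2d}$. I would then split
\begin{equation*}
R_\varepsilon=\int_t^T\!\!\int G(f_\varepsilon-\tilde f_\varepsilon)\,d\mu_s\,ds+\int_t^T\!\!\int G(\tilde f_\varepsilon-\tilde f)\,d\mu_s\,ds+\int_t^T\!\!\int G(\tilde f-f)\,d\mu_s\,ds .
\end{equation*}
The middle term $\to0$ by the continuous case; the last one equals $\int_{[t,T]\setminus J}\!\int G(\tilde f-f)\,d\mu_s\,ds$, hence is $\le2CC_1\eta$ in absolute value. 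For the first, write $(f-\tilde f)_\varepsilon(s,z)=\int_{{\sf B}_1(\sfrac32,0,0)}\rho(\tau,\zeta)\,(f-\tilde f)\big(\Phi_\varepsilon(\tau,\zeta)^{-1}\circ(s,z)\big)\,d\tau\,d\zeta$ and note that the time--component of $\Phi_\varepsilon(\tau,\zeta)^{-1}\circ(s,z)$ is $s-\varepsilon^2\tau$ with $\tau\in(\tfrac12,\tfrac52)$; since $f=\tilde f$ on $J\times\R^{2d}$, for $\varepsilon$ small the integrand vanishes unless $s\in\varepsilon^2\tau+([\tfrac t2,T]\setminus J)$, a set of Lebesgue measure $<\eta$. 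Bounding $|f-\tilde f|$ at the shifted point by $\hat C(1+\|z\|_B^\beta)$ exactly as in the proof of Lemma~\ref{lem:sublinearafterconv}, and integrating successively in $z$ against $\mu_s$, in $s$ over that translate, and in $(\tau,\zeta)$ against $\rho$, we get that the first term is $\le\hat CC_1\eta$, uniformly in $\varepsilon$. Thus $\limsup_{\varepsilon\to0}|R_\varepsilon|\le(2C+\hat C)C_1\eta$, and $\eta\to0$ finishes the argument.

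The hard part will be this last step, because the group mollifier averages in time and space simultaneously while $f$ is only measurable in time, so there is no pointwise convergence $f_\varepsilon(s,\cdot)\to f(s,\cdot)$ at a fixed $s$. The Scorza--Dragoni reduction goes through thanks to two facts: the time--support of $\rho_\varepsilon$ has width $O(\varepsilon^2)$ and lies strictly below $s$, so for each fixed $(\tau,\zeta)$ the ``bad'' set of times where $f\neq\tilde f$ gets reached is only a \emph{translate} of $[\tfrac t2,T]\setminus J$ (hence still of measure $<\eta$); and Lemma~\ref{lem:sublinearafterconv} together with the uniform integrability of $\|z\|_B^\beta$ along $\{\mu_s\}$ controls the $z$-integrals uniformly in $\varepsilon$. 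I expect the only remaining fuss to be routine bookkeeping near the endpoints $t$ and $T$ and the constant--in--time extension of $\tilde f$ to $[0,\infty)$.
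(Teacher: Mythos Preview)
Your proposal is correct and follows essentially the same route as the paper: Scorza--Dragoni to produce a jointly continuous approximant $\tilde f$ agreeing with $f$ off a small time set, the same three--term split $G(f_\varepsilon-\tilde f_\varepsilon)+G(\tilde f_\varepsilon-\tilde f)+G(\tilde f-f)$, and the key observation that the group mollifier shifts the time variable by $\varepsilon^2\tau$ so the bad time set only gets translated. Your ``translate of $[\tfrac t2,T]\setminus J$'' argument is exactly the paper's Young convolution inequality step (after marginalising the mollifier in the space variables), and your treatment of the continuous case via a tail cut and uniform integrability of $\|z\|_B^\beta$ is in fact more careful than the paper, which simply asserts that $\rho_\varepsilon\ast g_\delta\to g_\delta$ inside the integral.
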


\begin{proof}
	First of all, we apply Scorza Dragoni's Theorem to the function $f$ and
	for a fixed $\delta>0$ and $T>0$ we find a function $g_\delta: [0,T] \times \R^{2d} \to \R$ that is continuous and such that there exists a compact 
	$K \subset [0,T]$ with $g_\delta=f$ on $K \times \R^{2d}$ and $|[0,T]\setminus K|<\delta$, where $| \cdot |$ denotes the Lebesgue measure of the set. 
	
	Notice that, without loss of generality, by truncation we can assume that 
	$$
	|g_\delta(s,z)|, |f(s,z)| \le C ( 1 + \|z \|_B^{\beta}).
	$$ 
	Then, we have the pointwise estimate
	\begin{equation*}
		|f(s,z)-g_\delta(s,z)|=2C  \left( 1 + \|z \|_B^{\beta} \right) \, \chi_{[0,T] \setminus K} (s),
	\end{equation*}
	where $\chi_{[0,T] \setminus K}$ is the characteristic function of the set $[0,T]\setminus K$.
	
	Now, if we denote by $(\tau,\zeta)=(\tau,y,w) \in \R^{1+2d}$, we observe that
	\begin{align*}
		|\rho_\varepsilon \ast (f-g_\delta)(s,z)| &\le \int_{0}^{\infty}\int_{\R^{2d}} |\rho_\varepsilon((s,z) \circ (\tau,\zeta)^{-1})||f(\tau,\zeta)-g_\delta(\tau,\zeta)| d\zeta \, d\tau\\
		& \le 2C\int_{0}^{\infty}\int_{\R^{2d}} \rho_\varepsilon((s,z) \circ (\tau,\zeta)^{-1}) \left( 1 + \| \zeta \|_B^{\beta} \right)\chi_{[0,T]\setminus K}(\tau)d\tau \, d\zeta \\
		&= 2C\int_{0}^{\infty}\int_{\R^{2d}} \rho_\varepsilon((s,z) \circ (\tau,\zeta)^{-1}) \chi_{[0,T]\setminus K}(\tau)d\tau \, d\zeta  \\
		&\qquad + 2C\int_{0}^{\infty}\int_{\R^{2d}} \rho_\varepsilon((s,z) \circ (\tau,\zeta)^{-1})  \| \zeta \|_B^{\beta} \chi_{[0,T]\setminus K}(\tau)d\tau \, d\zeta\\
		&=:J_1(s,z)+J_2(s,z)
	\end{align*}
	Now, by consideringthe definition of group traslation \eqref{group_law}, of group convolution \eqref{convolution} and performing a change of variables, we set
	\begin{equation*}
		\widetilde{\rho}_\varepsilon(\tau-s)=\int_{\R^{2d}}\rho_\varepsilon((s,z)\circ (\tau,\zeta)^{-1})d\zeta=\frac{1}{\varepsilon^2}\int_{\R^{2d}}\rho(-\varepsilon^{-2}(\tau-s),\zeta)d\zeta,	
	\end{equation*}
	and then we have
	\begin{align*}
		I_1 (z):=  \int_t^T J_1(s,z) \, ds 
		\le 2C \int_t^T\int_{0}^{\infty}\widetilde{\rho}_{\varepsilon}(\tau-s)\chi_{[0,T]\setminus K}(s) ds \le 2C\delta
	\end{align*}
	by Young's convolution inequality applied to the Euclidean one-dimensional convolution. 
	Analogously, by defining 
	\begin{align*}
		I_2 (z):= \int_t^T J_2(s,z) \, ds  \le  2C\delta ( 1+ \| z \|_B^{\beta}),
	\end{align*}
	where we proceed as in the proof of Lemma \ref{lem:sublinearafterconv}.
	We deduce that	
	\begin{align*}
		&\left|\int_t^T \int_{\R^{2d}}G(s,z)\cdot f_\varepsilon(s,z) d\mu_s(z)ds- \int_t^T \int_{\R^{2d}}G(s,z)\cdot f(s,z) d\mu_s(z)ds\right| \\
		&\qquad \le \int_t^T \int_{\R^{2d}}|G(s,z)| |(\rho_\varepsilon \ast (f-g_\delta))(s,z)| d\mu_s(z)ds\\
		&\qquad +\left|\int_t^T \int_{\R^{2d}}G(s,z)\cdot (\rho_\varepsilon \ast g_\delta)(s,z) d\mu_s(z)ds-\int_t^T \int_{\R^{2d}} G(s,z)\cdot g_\delta(s,z) d\mu_s(z)ds\right|\\
		&\qquad+\int_t^T \int_{\R^{2d}} |G(s,z)| |f(s,z)-g_\delta(s,z)| d\mu_s(z)ds\\
		&\qquad \le 6C^2 \delta + 4C^2\delta \overline{M}_1 ({\bm \mu};T) \\
		&\qquad \qquad + C \int_t^T \int_{\R^{2d}} \Big | (\rho_\varepsilon \ast g_\delta)(s,z) d\mu_s(z)ds- g_\delta(s,z) \Big | d\mu_s(z) ds \\
	\end{align*}
	Taking the limit supremum as $\varepsilon \to 0$ the integral term vanishes as $g_\delta$ is continuous in $[0,+\infty)\times \R^{2d}$ and bounded, so that
	\begin{align*}
		\limsup_{\varepsilon \to 0} &\left|\int_0^T \int_{\R^{2d}}G(s,z)\cdot f_\varepsilon(s,z) d\mu_t(z)ds- \int_0^T \int_{\R^{2d}}G(s,z)\cdot f(s,z) d\mu_t(z)ds\right| \\
		&\qquad \qquad \qquad \qquad \le 6C^2 \delta + 4C^2\delta \overline{M}_1 ({\bm \mu};T)
	\end{align*}
	Finally, send $\delta \to 0$ to end the proof.
\end{proof}

It is worth mentioning that the regularization procedure introduced in \eqref{convolution} commutes with the Lie derivative just introduced.
\begin{lem}\label{lemY}
	Let $f \in C([0,\infty) \times \R^{2d})$ with $Yf \in L^1_{\rm loc}([0,\infty) \times \mathbb{R}^{2d})$, let $\{\rho_\varepsilon\}_{\varepsilon>0}$ be a family of mollifiers as defined in \eqref{mollifier} and set $f_\varepsilon:=\rho_\varepsilon \ast f$. Then $Yf_\varepsilon=\rho_\varepsilon \ast Yf$. 
\end{lem}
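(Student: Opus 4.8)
The plan is to exploit the \emph{left-invariance} of the Lie derivative $Y$ with respect to the group law $\circ$, combined with the explicit ``intrinsic'' form of the mollification given by the second line of \eqref{convolution}. Recalling Remark \ref{YC1}, the integral curve of $Y$ issuing from $(t,z)$ is $h\mapsto(t,z)\circ(h,0,0)$; writing $\mathbf{e}_h:=(h,0,0)$ one has $(t,z)\circ\mathbf{e}_h=(t+h,e^{hB}z)$, so that by \eqref{eq:Ydef}, $Yg(t,z)=\frac{d}{dh}\big|_{h=0}g((t,z)\circ\mathbf{e}_h)$ whenever this limit exists. First I would fix $\varepsilon>0$ and $(t,z)\in(\sfrac52\,\varepsilon^2,+\infty)\times\R^{2d}$ and observe, using associativity of $\circ$ in \eqref{convolution}, that for $h$ small (so that $(t,z)\circ\mathbf{e}_h$ remains in the domain of $f_\varepsilon$)
\begin{equation*}
	f_\varepsilon\big((t,z)\circ\mathbf{e}_h\big)=\int_{{\sf B}_1(\sfrac32,0,0)}\rho(\tau,\zeta)\,f\big(\Phi_\varepsilon(\tau,\zeta)^{-1}\circ(t,z)\circ\mathbf{e}_h\big)\,d\tau\,d\zeta,
\end{equation*}
i.e.\ the right translation by $\mathbf{e}_h$ passes through the convolution onto the argument of $f$.

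Next, reading the hypothesis ``$Yf\in L^1_{\rm loc}$'' as $f\in{\rm AC}_Y$ (as in Remark \ref{remark-Y}), I would apply the integral representation \eqref{eq:ACYf}: for each $(\tau,\zeta)\in{\rm supp}\,\rho$, setting $w=w(\tau,\zeta):=\Phi_\varepsilon(\tau,\zeta)^{-1}\circ(t,z)$, which lies in $(0,+\infty)\times\R^{2d}$ as noted right after \eqref{convolution}, one has $f(w\circ\mathbf{e}_h)-f(w)=\int_0^h Yf(w\circ\mathbf{e}_r)\,dr$. Multiplying by $\rho(\tau,\zeta)$, integrating over ${\sf B}_1(\sfrac32,0,0)$ and exchanging the order of integration via Fubini--Tonelli would give
\begin{equation*}
	f_\varepsilon\big((t,z)\circ\mathbf{e}_h\big)-f_\varepsilon(t,z)=\int_0^h\Big(\int_{{\sf B}_1(\sfrac32,0,0)}\rho(\tau,\zeta)\,Yf\big(\Phi_\varepsilon(\tau,\zeta)^{-1}\circ(t,z)\circ\mathbf{e}_r\big)\,d\tau\,d\zeta\Big)\,dr=\int_0^h(\rho_\varepsilon\ast Yf)\big((t,z)\circ\mathbf{e}_r\big)\,dr,
\end{equation*}
where the inner integral is recognized, once more via the second line of \eqref{convolution} applied now to $Yf$ together with associativity, as $\rho_\varepsilon\ast Yf$ evaluated at $(t,z)\circ\mathbf{e}_r$. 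To close, I would note that $\rho_\varepsilon\ast Yf$ is continuous on $(\sfrac52\,\varepsilon^2,+\infty)\times\R^{2d}$ (differentiate under the integral sign in \eqref{convolution}, using $\rho_\varepsilon\in C^\infty_{\rm c}(\R^{1+2d})$ and $Yf\in L^1_{\rm loc}$), so the displayed identity exhibits $h\mapsto f_\varepsilon((t,z)\circ\mathbf{e}_h)$ as a $C^1$ function; its derivative at $h=0$ is then $(\rho_\varepsilon\ast Yf)(t,z)$, which by \eqref{eq:Ydef} is exactly $Yf_\varepsilon(t,z)$. This proves $Yf_\varepsilon=\rho_\varepsilon\ast Yf$.

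I expect the main obstacle to be the measure-theoretic bookkeeping behind the Fubini--Tonelli step: one must justify joint measurability of $(\tau,\zeta,r)\mapsto Yf(w(\tau,\zeta)\circ\mathbf{e}_r)$ and finiteness of the associated double integral of $|Yf|$, bearing in mind that $Yf$ is only defined almost everywhere. Both follow from the fact that, for each fixed $r$, the map $(\tau,\zeta)\mapsto w(\tau,\zeta)\circ\mathbf{e}_r$ is a diffeomorphism onto a relatively compact subset of $(0,+\infty)\times\R^{2d}$ --- being a composition of $\Phi_\varepsilon$, group inversion and a left translation --- with Jacobian of constant modulus $\varepsilon^{4d+2}$, so that push-forwards of Lebesgue measure stay absolutely continuous and $Yf\in L^1_{\rm loc}$ supplies the required integrability on each such compact region; everything else is the purely algebraic identity encoding left-invariance of $Y$ and unimodularity of $\K$.
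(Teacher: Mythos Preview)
Your proof is correct and rests on the same idea as the paper's---left-invariance of $Y$ with respect to $\circ$---though the paper compresses the entire argument into a single line (apply $Y$ under the integral in the second form of \eqref{convolution} and invoke left-invariance), while you carefully unpack the justification via the integral curve $h\mapsto(t,z)\circ\mathbf{e}_h$, the ${\rm AC}_Y$ representation, and Fubini. One small caveat: the lemma as stated only assumes $Yf\in L^1_{\rm loc}([0,\infty)\times\R^{2d})$, not $f\in{\rm AC}_Y$, so your reading of the hypothesis is strictly an additional assumption; it is, however, exactly what holds in the paper's only application of the lemma (to the strong Lie solution $u$, cf.\ Remark~\ref{rem:pointwise2}), so the argument is sound where it is used.
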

\begin{proof}
	Let us fix $(t,z) \in (0,+\infty)\times \R^{2d}$. Then we observe $Y$ is a left-invariant vector field with respect to the composition law $\circ$. 
	Hence, we are allowed to write
	\begin{align*}
		Y f_\epsilon (t,z) = \int_{{\sf B}_1(\sfrac32,0,0)} \rho (\tau, \zeta) (Y f ) \left( \Phi_\epsilon (\tau, \zeta)^{-1} \circ (t,z) \right) \, d\tau \, d \zeta.
	\end{align*}	
\end{proof}

\bibliographystyle{abbrv}

\end{document}